\newcommand{\Sha}{\mbox{\wncyr Sh}}
\newcommand{\Z}{{\mathbb Z}}
\newcommand{\Q}{{\mathbb Q}}
\def\C{\mathbb C}
\def\O{{\mathcal O}}
\def\bb{{\mathfrak b}}
\def\cc{{\mathfrak c}}
\def\pp{{\mathfrak p}}
\def\ord{\operatorname{ord}}
\def\Norm{\operatorname{Norm}}
\def\Disc{\operatorname{Disc}}
\newcommand{\av}{{\mathbf a}}
\newcommand{\bv}{{\mathbf b}}
\newcommand{\OO}{{\mathcal{O}}}
\newcommand{\R}{{\mathbb R}}
\newcommand{\F}{{\mathbb F}}
\newcommand{\PP}{{\mathbb P}}
\newcommand{\BA}{{\mathbb A}}
\newcommand{\Kbar}{\overline{K}}
\newcommand{\Magma}{{\sf MAGMA}}
\newcommand{\Trd}{\operatorname{Trd}}
\newcommand{\Nrd}{\operatorname{Nrd}}
\newcommand{\GL}{\operatorname{GL}}
\newcommand{\Gal}{\operatorname{Gal}}
\newcommand{\Aut}{\operatorname{Aut}}
\newcommand{\End}{\operatorname{End}}
\newcommand{\Map}{\operatorname{Map}}
\newcommand{\Sym}{\operatorname{Sym}}
\newcommand{\Sel}{\operatorname{Sel}}
\newcommand{\Pic}{\operatorname{Pic}}
\newcommand{\Mat}{\operatorname{Mat}}
\newcommand{\im}{\operatorname{im}}
\newcommand{\res}{\operatorname{res}}
\newcommand{\eps}{\varepsilon}
\newcommand{\Gm}{\mathbb{G}_{\text{\rm m}}}
\newcommand{\isom}{\cong}
\newcommand{\ra}{\longrightarrow}
\newcommand{\Tr}{\operatorname{Tr}}
\newcommand{\tr}{\operatorname{tr}}
\newcommand{\divv}{\operatorname{div}}
\newcommand{\diw}{\operatorname{div}}
\newcommand{\Rbar}{{\overline{R}}}
\newcommand{\Qbar}{{\overline{\Q}}}
\newcommand{\Ob}{\operatorname{Ob}}
\newcommand{\Br}{\operatorname{Br}}
\newcommand{\GQ}{{G_\Q}} 
\newcommand{\GK}{{G_K}}  
\newcommand{\GR}{{G_\R}} 
\newcommand{\To}{\longrightarrow}
\newcommand{\Cl}{\operatorname{Cl}}
\newcommand{\SSS}{{Section~}}
\newcommand{\origin}{O} 
\newcommand{\SSet}{\mathcal{S}} 
\newcommand{\coordinate}{coordinate} 
\newcommand{\coordinates}{coordinates} 
\newcommand{\ie}{i.e.,\ } 
\newcommand{\bidegree}{bidegree} 
\newcommand{\bihomogeneous}{bihomogeneous} 
\newfont{\wncyr}{wncyr10 at 12pt}
\newfont{\wncyrten}{wncyr10 at 10pt}
\newenvironment{Proof}{\par\noindent{\sc Proof:}}%
                      {\hspace*{\fill}\nobreak$\Box$\par\medskip}
\newcounter{nootje}
\newtheorem{Theorem}{Theorem}[section]
\newtheorem{Proposition}[Theorem]{Proposition}
\newtheorem{Lemma}[Theorem]{Lemma}
\theoremstyle{definition}
\newtheorem{Definition}[Theorem]{Definition}
\newtheorem{Remark}[Theorem]{Remark}
\newtheorem{Example}[Theorem]{Example}
\begin{document}

\date{23rd August, 2012}
\title[Explicit $n$-descent on elliptic curves]%
{Explicit $n$-descent on elliptic curves\\III. Algorithms}

\author{J.E.~Cremona} 
\address{Mathematics Institute,
         University of Warwick,
         Coventry CV4 7AL, UK}
\email{J.E.Cremona@warwick.ac.uk}

\author{T.A.~Fisher}
\address{University of Cambridge,
         DPMMS, Centre for Mathematical Sciences,
         Wilberforce Road, Cambridge CB3 0WB, UK}
\email{T.A.Fisher@dpmms.cam.ac.uk}

\author{C. O'Neil}
\address{New York}
\email{cathy.oneil@gmail.com}

\author{D. Simon}
\address{Universit\'e de Caen, Campus II - Boulevard Mar\'echal Juin,
BP 5186--14032, Caen, France}
\email{Denis.Simon@math.unicaen.fr}

\author{M.~Stoll}
\address{Mathematisches Institut, Universit\"at Bayreuth, 95440
  Bayreuth, Germany}
\email{Michael.Stoll@uni-bayreuth.de}

\renewcommand{\theenumi}{\roman{enumi}}

\begin{abstract} 
  This is the third in a series of papers in which we study the
  $n$-Selmer group of an elliptic curve, with the aim of representing
  its elements as curves of degree $n$ in~$\PP^{n-1}$.  The methods
  we describe are practical in the case $n=3$ for elliptic curves over
  the rationals, and have been implemented in \Magma.

  One important ingredient of our work is an algorithm for 
  trivialising central simple algebras. This is of independent interest:
  for example, it could be used for parametrising Brauer-Severi surfaces.
\end{abstract}

\maketitle

\enlargethispage{0.5ex}

\tableofcontents 


\section{Introduction}

Descent on an elliptic curve $E$, defined over a number field $K$,
is a method for obtaining information about
both the Mordell-Weil group $E(K)$
and the Tate-Shafarevich group $\Sha(E/K)$.
Indeed for each integer $n\ge2$ there is an exact sequence
$$ 0 \to E(K)/nE(K) \to \Sel^{(n)}(E/K) \to \Sha(E/K)[n] \to 0 $$
where $\Sel^{(n)}(E/K)$ is the $n$-Selmer group.

This is the third in a series of papers in which
we study the $n$-Selmer group with the aim of representing its
elements (when $n \ge 3$) as curves of degree~$n$ in~$\PP^{n-1}$.
Having this representation allows searching for rational
points on $C$ (which in turn gives points in $E(K)$, since
$C$ may be seen as an $n$-covering of $E$) and is a first step towards
doing higher descents. A further application is to the study of
explicit counter-examples to the Hasse Principle.

The Selmer group $\Sel^{(n)}(E/K)$ is a subgroup of the Galois
cohomology group $H^1(K,E[n])$, which parametrises the $n$-coverings
of $E$.  An $n$-covering $\pi : C \to E$ represents a Selmer group
element if and only if $C$ is everywhere locally soluble, \ie
$C$ has $K_v$-rational points for each completion $K_v$ of~$K$. In this case
it was shown by Cassels~\cite{Cassels-IV-Hauptvermutung} that $C$ admits
a $K$-rational divisor $D$ of degree $n$.
When $n > 2$, we can then use the complete linear system $|D|$ to embed $C$ in
$\PP^{n-1}$. Following the terminology in~\cite[Section 1.3]{paperI},
the image is called a genus one normal curve of degree~$n$.
(For $n=2$ we obtain
instead a double cover $C\to\PP^1$.)  More precisely, Cassels'
argument shows that $\Sel^{(n)}(E/K)$ is contained in the `kernel' of the
obstruction map $\Ob : H^1(K,E[n]) \to \Br(K)$.

In the first paper of this series \cite{paperI} we gave a list of
interpretations of $H^1(K,E[n])$ and of the obstruction map.  Then we
showed how, given $\xi \in H^1(K,E[n])$, to explicitly represent
$\Ob(\xi)$ as a central simple algebra $A$ of dimension $n^2$ over
$K$, by giving structure constants for~$A$; we call $A$ the {\em
  obstruction algebra}. In the case $\xi \in \Sel^{(n)}(E/K)$, we have
$A \isom \Mat_n(K)$. Assuming the existence of a ``Black Box'' to
compute such an isomorphism explicitly, a process we call {\em
  trivialising the obstruction algebra}, we then outlined three
algorithms to compute equations for $C \subset \PP^{n-1}$. These were
called the Hesse pencil, flex algebra, and Segre embedding methods.

In the second paper \cite{paperII} we developed the Segre embedding
method. In this paper we are again concerned with the Segre embedding
method.
We give an outline of the work in the earlier papers, and then give
further details of the algorithms. In particular, taking $K= \Q$ and
$n=3$, we explain our method for trivialising the obstruction algebra
(see \SSS\ref{blackbox}).

Returning to general~$n$, we  observe that
\[ \Sel^{(mn)}(E/K) \isom \Sel^{(m)}(E/K) \times \Sel^{(n)}(E/K) \]
whenever $m$ and~$n$ are coprime. Therefore 
for the purposes of computing Selmer groups we can restrict our
attention to prime powers~$n$. 
Then, if $n = p^f$ with $f \ge 2$, the most
efficient way to proceed seems to be to first recursively compute
$\Sel^{(p^{f-1})}(E/K)$, to realise the elements of that Selmer group
as suitable covering curves (using the methods of this paper, for
example), and then to compute the fibres of the natural map
$\Sel^{(p^f)}(E/K) \to \Sel^{(p^{f-1})}(E/K)$ via $p$-descents on
these covering curves. This has been worked out for $p = 2$ and $f =
2$ by Siksek~\cite{Siksek}, Merriman, Siksek and Smart~\cite{MSS},
Cassels~\cite{Cassels4} and Womack~\cite{Womack}; for $p = 2$ and $f =
3$ by Stamminger~\cite{Stamminger}; and for odd~$p$ and $f = 2$ by
Creutz~\cite{Creutz}.

In Sections~\ref{sec:w2} and~\ref{sec:w1} we recall the construction
of an \'etale algebra (that is, a product of number fields) $R$
and a homomorphism $\partial : R^\times \to (R \otimes R)^\times$
such that
$\Sel^{(n)}(E/K)$ may be realised either as a subgroup of $(R \otimes
R)^\times/ \partial R^\times$, with elements represented by $\rho \in
(R \otimes R)^\times$, or as a subgroup of $R^\times/(R^\times)^n$,
with elements represented by $\alpha \in R^\times$. The first
of these works for any $n \ge 2$ and is better suited to the
computation of the obstruction algebra and equations for $C$. The
second only works for prime~$n$, but is better suited to computing the
Selmer group itself, in that the class group and unit calculations are
more manageable.  So in \SSS\ref{sec:convert}, we discuss how to
convert from one representation to the other (from~$\alpha$
to~$\rho$). The Segre embedding method is then reviewed in
Sections~\ref{compcov} and~\ref{improve_eqns}.

It is sometimes convenient to assume $n >2$. For example, when $n=2$
the map $C \to \PP^1$ is a double cover rather than an embedding.
However, if the Segre embedding method is suitably interpreted in the
case $n=2$, then it corresponds exactly to the classical number field
method\footnote{ An alternative method for $2$-descent over 
  $K = \Q$, based on invariant theory, 
  is implemented in \texttt{mwrank} \cite{mwrank}, and is
  competitive over a large range of curves, but seems to become
  impractical in other cases: when $K$ is a larger number field, or
  when $n > 2$.}  for $2$-descent. This is explained in
\SSS\ref{twodesc}.  In \SSS\ref{threedesc}, we give an equally
explicit description of our algorithms in the case $n=3$, assuming the
action of Galois on $E[3]$ is generic.
We do not give full details of the modifications required to handle 
the other Galois actions  as this would be unduly tedious,
though each case had to be handled in detail in our \Magma\
implementation.

Starting with $\alpha \in R^\times$, we write down structure
constants for the obstruction algebra~$A$. Then we trivialise the
algebra~$A$.
Using the trivialisation, we obtain a plane cubic $C \subset \PP^2$.
Now, the element $\alpha$ is typically of very large height: it comes
out of a class group and unit calculation that involves many random
choices.  Consequently, the first equation for~$C$ which we obtain is
a ternary cubic with enormous coefficients.  In order to obtain a more
reasonable equation, we finally use our algorithms for minimisation
and reduction (see \cite{minred234}) to make a good change of
\coordinates.

After we wrote our initial implementation for $K=\Q$ and $n=3$, 
it became clear that the algorithm could be improved by carrying out steps
equivalent to minimisation and reduction at an earlier stage.
Firstly, $\alpha$ should be replaced by a good representative
modulo $n$th powers, as has already been described in \cite[Section
  2]{FisherANTS2008}. Then we should choose a good basis for the
obstruction algebra, so as to make the structure constants small
integers. This is described in \SSS\ref{compobs}, and makes
trivialising the obstruction algebra very much easier. (In fact, this
trivialisation is again a problem of ``minimisation and reduction''
type.) As a result, the algorithms in \cite{minred234}, although still
required, do not need to work so hard.

In \SSS\ref{blackbox} we describe our methods for trivialising the
obstruction algebra. Since our methods are of independent interest, we
have made this section self-contained. For instance our methods could
be used to improve the algorithm in \cite{GHPS} for parametrising
Brauer-Severi surfaces.

One peculiar feature of the Segre embedding method is that in our
initial implementation (for $K= \Q$ and $n=3$) it was necessary to
multiply by a ``fudge factor'' $1/y$ to ensure that the projection of
$C \subset \PP(A)$ to the trace~$0$ subspace is contained in the rank
$1$ locus. The need for this factor was justified by a generic
calculation specific to the case $n=3$. In \SSS\ref{exteqns} we
give a better explanation, based on the 
theory in \cite{paperII}, that works for all odd integers~$n$.

Finally, we give examples of the algorithm in practice, in
\SSS\ref{sec:examples}.  One application of our work is that it
can help find generators of large height on an elliptic curve.
Indeed the logarithmic height of a rational point on an $n$-covering 
is expected to be smaller by a factor $2n$ compared to its 
image on the elliptic curve. (See \cite{FisherSills} for a precise statement.)
Our work in the case $n=3$ is a starting point both for the
work on $6$-{} and $12$-descent in \cite{6and12}, and for the work on
$9$-descent in \cite{Creutz}.  In \SSS\ref{sec:examples} we
instead use our methods to exhibit some explicit elements of
$\Sha(E/\Q)[3]$, and to give some  examples 
illustrating that the kernel of the obstruction map on 3-coverings is 
not a group. 
The use of our methods to compute some explicit elements of 
$\Sha(E/\Q)[5]$ is described in \cite{desc5}. For this we use that
the Hesse pencil method (described in \cite[Section 5.1]{paperI} 
in the case $n=3$) generalises to the case $n=5$ as described 
in \cite[Section 12]{hessians}.

Our algorithms have been implemented in (and contributed to)
\Magma\ \cite[Version 2.13 and later]{Magma} for $K = \Q$ and $n=3$.  
A first version of the implementation was written by
Michael Stoll; it was restricted to the case of a transitive
Galois action on the points of order~$3$. 
Steve Donnelly extended the part that computes the
$3$-Selmer group as an abstract group to cover all 
possible Galois actions,
and Tom Fisher re-worked and extended the part that turns
abstract Selmer group elements into plane cubic curves, so that it
also works for all possible Galois actions.

These programs are currently specific to the case $K= \Q$. The two main
obstacles to extending them to general number fields are as follows.
Firstly, it would be necessary (unless the Galois action on $E[3]$ is
smaller than usual) to compute class group and units over number
fields of larger degree.  Secondly, we do not have a suitable theory
of lattice reduction over number fields. Notice that our algorithms
over $K=\Q$ are dependent on the LLL-algorithm, first in \cite[Section
2]{FisherANTS2008}, then in Sections~\ref{compobs} and~\ref{blackbox} 
and finally in \cite{minred234}.  It is
possible that the algorithms described in \cite{Fieker-Stehle} could be
used here, but we have not yet investigated this further.


\section{Algorithms in outline}
\label{sec:overview}

In this section we give an outline of the algorithms used in our
implementation of explicit 3-descent on elliptic curves over~$\Q$.
However, as far as possible in this overview, we keep to the case
where $n$ is general and the base field is a general number field~$K$.
Details specific to the case $n = 3$ can be found in
\SSS\ref{threedesc} below.  Our description here is based on what
was called the `Segre embedding method' in~\cite{paperI,paperII}.  We
begin with a review of the computation of the Selmer group as an
abstract group. When we talk about the `generic case' below, we refer
to the situation when the action of the Galois group~$G_K$ on~$E[n]$
induces a surjective homomorphism of~$\GK$ onto $\Aut_{\Z/n\Z}(E[n])
\isom \GL_2(\Z/n\Z)$. When $K = \Q$ and $E$ is fixed and non-CM, this
will be the case for all but finitely many prime values of~$n$. 
It will also be true for the generic elliptic curve 
$y^2 = x^3 + a x + b$ over~$\Q(a,b)$.


\subsection{Computation of the Selmer group I: The \'etale algebra}
\label{compsel}

Let $E$ be an elliptic curve over a field $K$. (We will take $K$ to be
a number field later.) We fix a Weierstra{\ss} equation for~$E$ and
denote the \coordinate{} functions with respect to this equation by $x$
and~$y$.  Let $n \ge 2$ be an integer not divisible by the
characteristic of~$K$. Writing $\Map_K$ for the space of 
Galois equivariant maps, we let $R = \Map_K(E[n],\Kbar)$ be the \'etale
algebra of~$E[n]$. Then $E[n] = {\operatorname{Spec}} \, R$,
and the algebra $R$ splits as a product of 
finite extensions of~$K$ corresponding to the Galois orbits on~$E[n]$, where
the component corresponding to the orbit of $T \in E[n]$ is~$K(T)$,
the field of definition of~$T$. There is always a splitting $R = K
\times L$ with $K$ corresponding to the singleton orbit~$\{\origin\}$
and $L = \Map_K(E[n] \setminus \{\origin\}, \Kbar)$.  If $n$ is a
prime~$p$, then generically the Galois action is transitive on the
points of order~$p$, and $L/K$ is a field extension of degree~$p^2-1$.

The tensor product $R \otimes_K R$ is the \'etale algebra of~$E[n]
\times E[n]$. We denote by $\Sym_K^2(R)$ the subalgebra consisting of
symmetric functions:
\[ \Sym_K^2(R) = \{\rho \in R \otimes_K R \mid
                    \rho(T_1,T_2) = \rho(T_2,T_1)
                     \text{\; for all $T_1, T_2 \in E[n]$}\}
\]
This is the \'etale algebra of the set of unordered pairs of
$n$-torsion points.  As before, these algebras split into products of
finite field extensions of~$K$ corresponding to the Galois orbits on
$E[n] \times E[n]$ and on the set of unordered pairs of $n$-torsion
points, respectively. The algebra $\Sym_K^2(R)$ contains a factor
corresponding to unordered bases of~$E[n]$ as a $\Z/n\Z$-module.  When
$n$ is a prime~$p$, then generically, the Galois group acts
transitively on these bases, and the corresponding factor
of~$\Sym^2_K(R)$ is a field extension of~$K$ of degree
$(p^2-1)(p^2-p)/2$.

The group law 
$E[n] \times E[n] \to E[n]$ corresponds to the comultiplication
$\Delta_K : R \to \Sym_K^2(R) \subset R \otimes_K R$.
We write $\Tr_K: R \otimes_K R \to R$ for
the trace map obtained by viewing $R \otimes_K R$ as an $R$-algebra
via~$\Delta_K$. In terms of maps we have
\[ \Delta_K(\alpha): (T_1,T_2) \mapsto \alpha(T_1+T_2)
    \text{\quad and\quad}
    \Tr_K(\rho) : T \mapsto \sum_{T_1+T_2 = T} \rho(T_1,T_2) \,.
\]


\subsection{Computation of the Selmer group II: Using $w_2$}
\label{sec:w2}

We define
\[
\begin{aligned}
\partial_K : R^\times &\To \Sym_{K}^2(R)^\times \, \\
\text{by}\qquad
\alpha &\longmapsto \frac{\alpha \otimes \alpha}{\Delta_K(\alpha)} 
               = \left( (T_1,T_2)
             \mapsto \frac{\alpha(T_1) \alpha(T_2)}%
                          {\alpha(T_1+T_2)} \right).
\end{aligned}
\]
In~\cite[p.~138]{paperI}, we defined another map $\partial$, which we here
denote $\partial^{(2)}_K$ to avoid confusion. It is given by
\begin{align*}
  \partial^{(2)}_K : (R \otimes_K R)^\times &\To (R \otimes_K R \otimes_K R)^\times \\
    \rho &\longmapsto \Bigl((T_1,T_2,T_3) \mapsto
                     \frac{\rho(T_1,T_2) \rho(T_1+T_2,T_3)}%
                          {\rho(T_1,T_2+T_3) \rho(T_2,T_3)}\Bigr) \,.
\end{align*}
We let $H_K = \Sym^2_K(R)^\times \cap \ker \partial^{(2)}_K$.

Let $\Rbar = R \otimes_K \Kbar$ (which is the \'etale algebra of~$E[n]$
over~$\Kbar$), and let $\Sym_{\Kbar}^2(\Rbar)$ be the \'etale algebra
over~$\Kbar$ of the set of unordered pairs of $n$-torsion points.
Similarly, we write $\overline{H}$ for~$H_{\Kbar}$.

Let $w : E(\Kbar)[n] \to \Rbar^\times$ be given by
\[ w(S) : T \longmapsto e_n(S, T) \] where $e_n : E[n] \times E[n] \to
\mu_n$ denotes the Weil pairing.  Then it is easily seen that the
image of~$w$ equals the kernel of~$\partial_{\Kbar}$.  We showed
in~\cite{paperI} that the following is an exact sequence of 
$\GK$-modules:
\[ 0 \To E(\Kbar)[n] \stackrel{w}{\To} \Rbar^\times
      \stackrel{\partial_{\Kbar}}{\To} \overline{H} \To 0 \,.
\]
Taking cohomology, this gives an isomorphism
\[ w_2 : H^1(K, E[n]) \To H_K/\partial R^\times \,, \]
see~\cite[Lemmas~3.2 and~3.5]{paperI}.  For the construction of
explicit $n$-coverings representing elements $\xi \in H^1(K, E[n])$
(which, in our intended application, will be elements of the
$n$-Selmer group), we will need an element $\rho \in H_K$ whose image
in $H_K/\partial R^\times$ is the image under~$w_2$ of~$\xi$.

In principle, we could use $w_2$ to compute such a set of
representatives of the elements of~$\Sel^{(n)}(E/K)$ directly, as we
now describe. We now assume that $K$ is a number field. We abbreviate
$H_K$ to~$H$ (note that what is called~$H$ in~\cite{paperI} would be
$H/\partial R^\times$ in the notation used here) and usually drop the
subscripts on $\Delta$, $\partial$, etc.

Recall the Kummer exact sequence
\[ 0 \To E(K)/n E(K) \stackrel{\delta}{\To} H^1(K, E[n]) \To H^1(K,
E)[n] \To 0 \,. \] For a place~$v$ of~$K$, we write $R_v = R \otimes_K
K_v$ and $H_v = H_{K_v}$.  We denote the canonical maps $H \to H_v$
and $H/\partial R^\times \to H_v/\partial R_v^\times$ by~$\res_v$. 
The maps corresponding to $\delta$ and~$w_2$ that we obtain 
by working over~$K_v$ are denoted by $\delta_v$ and~$w_{2,v}$.

By the definition of the $n$-Selmer group and the fact that $w_2$ is an isomorphism, we have
\begin{align*}
  w_2&\bigl(\Sel^{(n)}(E/K)\bigr) \\
    &= \{\rho \in H/\partial R^\times \mid
          \res_v(\rho) \in \im(w_{2,v} \circ \delta_v)
          \text{\; for all places $v$ of~$K$}\} \,.
\end{align*}
According to~\cite{SchaeferStoll}, the image of~$\delta_v$ is the
unramified subgroup of $H^1(K_v, E[n])$ unless $v$ is infinite and $n$ is even,
or $v$ divides~$n$, or the Tamagawa number of~$E$ at~$v$ is not coprime to~$n$.
Let $\SSet$ be the set of places of~$K$ that fall into one of these categories.
If $v$ is a place of~$K$ and $\rho \in H$, we say that $\rho$ is \emph{unramified
at~$v$} if $\rho\, \partial R^\times = w_2(\xi)$ with $\xi \in H^1(K, E[n])$
unramified at~$v$ (\ie
\[ \res_v(\xi) \in \ker\bigl(H^1(K_v, E[n]) \to H^1(K_v^{\text{nr}}, E[n])\bigr) \]
where $K_v^{\text{nr}}$ is the maximal unramified extension of~$K_v$).
This is equivalent to saying that
the extension $R(\gamma)/R$ of \'etale algebras is unramified
at~$v$ for some $\gamma \in \Rbar^\times$ satisfying $\partial \gamma = \rho$.
Writing $H_\SSet$ for the subgroup of elements unramified outside~$\SSet$ and
$\widetilde{H}_\SSet$ for the image of~$H_\SSet$ in~$H/\partial R^\times$, we then have
\begin{align*}
  w_2\bigl(\Sel^{(n)}(E/K)\bigr)
    &= \{\rho \in \widetilde{H}_\SSet \mid
          \res_v(\rho) \in \im(w_{2,v} \circ \delta_v)
          \text{\; for all $v \in \SSet$}\} \,.
\end{align*}
For an \'etale $K$-algebra~$A$, write $U_\SSet(A)$ for the group of $\SSet$-units
of~$A$, $I_\SSet(A)$ for the group of ideals of~$A$ supported outside~$\SSet$
and $\Cl_\SSet(A)$ for the $\SSet$-class group of~$A$. Then there is an exact sequence
\[ 0 \To U_\SSet(A) \To A^\times \To I_\SSet(A) \To \Cl_\SSet(A) \To 0 \,. \]
The map~$\partial$ induces a homomorphism from this exact sequence
for $A = R$ to the corresponding sequence for $A = \Sym^2_K(R)$.
Applying the Snake Lemma to the commutative diagram (with exact rows)
\[ \xymatrix{  & R^\times \ar[r]^{\partial} \ar[d]
               & H \ar[r] \ar[d]
               & H /\partial R^\times \ar[r] \ar[d]
               & 0 \\
              0 \ar[r]
               & I_\SSet(R) \ar[r]^-{\partial}
               & I_\SSet\bigl(\Sym^2_K(R)\bigr) \ar[r]
               & \dfrac{I_\SSet\bigl(\Sym^2_K(R)\bigr)}{\partial I_\SSet(R)} \ar[r]
               & 0
            }
\]
and observing that $\widetilde{H}_\SSet$ is the kernel of the right-most vertical
map (this uses that $\SSet$ contains all primes dividing~$n$), we obtain
an exact sequence
\[ 0 \To \frac{U_\SSet\bigl(\Sym^2_K(R)\bigr) \cap H}{\partial U_\SSet(R)}
     \To \widetilde{H}_\SSet
     \To \Cl_\SSet(R)^0 \To 0 
\]
where $\Cl_\SSet(R)^0 = \ker\bigl(\partial : \Cl_\SSet(R) \to
\Cl_\SSet(\Sym^2_K(R))\bigr)$.  There are algorithms for computing
$\SSet$-unit groups and $\SSet$-class groups of number fields (see for
example~\cite[7.4.2]{CohenGTM193}), which can be applied to the
constituent fields of $R$ and~$\Sym^2_K(R)$. Based on these and the
exact sequence above, we can compute an explicit set of generators
of~$\widetilde{H}_\SSet$.

In order to turn the description of~$\Sel^{(n)}(E/K)$ above
into an algorithm, we need to be able
to evaluate $w_{2,v} \circ \delta_v$. This can be done as follows.

Let $T \in E[n]$. Then there is a rational function $G_T \in
K(T)(E)^\times$ such that
\[ \diw(G_T) = [n]^*(T) - [n]^*(\origin)
             = \sum_{P :\  nP = T} (P) - \sum_{Q :\  nQ = \origin} (Q) \,.
\]
These functions have the property that $G_T(P+S) = e_n(S,T) G_T(P)$
for all $S \in E[n]$ and $P \in E$, provided both sides are defined.
We can choose them in such a way that the map $G : T \mapsto G_T$ is
Galois-equivariant.  Then we can interpret $G$ as an element of
$R(E)^\times$.  Here, $R(E) = K(E) \otimes_K R$; its elements are
$\GK$-equivariant maps from $E[n]$ into $\overline{K}(E)$.  Then we
have $G(P + T) = w(T) G(P)$ for $P \in E \setminus E[n^2]$ and $T \in
E[n]$.

For $T_1, T_2 \in E[n]$ define rational functions~$r_{T_1,T_2}$
(compare~\cite[p.~67]{paperII}) by
\[ r_{T_1,T_2} = \left\{\begin{array}{cl}
                   1 & \text{\quad if $T_1 = \origin$ or $T_2 = \origin$;} \\
                   x - x(T_1) & \text{\quad if $T_1 + T_2 = \origin$, $T_1 \neq \origin$;}\\
                   \dfrac{y + y(T_1+T_2)}{x - x(T_1+T_2)} - \lambda(T_1,T_2)
                     & \text{\quad otherwise,}
                 \end{array}\right.
\]
where $\lambda(T_1,T_2)$ denotes the slope of the line joining $T_1$
and~$T_2$ (or the slope of the tangent line at $T_1 = T_2$ if the
points coincide).  Just as before, we can package these functions into
a single element $r \in \Sym^2_K(R)(E)^\times$.

Now consider the following diagram with exact rows.
\begin{equation} \label{diagram-w2}
   \xymatrix{ 0 \ar[r] & E(\Kbar)[n] \ar[r] \ar@{=}[d]
                       & E(\Kbar) \ar[r]^{n} \ar@{-->}[d]^{G}
                       & E(\Kbar) \ar[r] \ar@{-->}[d]^{r} & 0 \\
              0 \ar[r] & E(\Kbar)[n] \ar[r]^-{w} 
                       & \overline{R}^\times \ar[r]^-{\partial}
                       & \overline{H} \ar[r] & 0
            }
\end{equation}
The dashed arrows indicate the partially defined maps given by
evaluating the tuples of rational functions $G$ and $r$.  The
right-hand square commutes by~\cite[Proposition~3.2]{paperII},
provided that we scale the~$G_T$ as in the proof of Proposition~3.1
of~\cite{paperII}.  The left-hand square commutes in the sense that
the actions of~$E[n]$ on~$E(\Kbar)$ by translation and
on~$\Rbar^\times$ by multiplication via~$w$ are related by~$G$, \ie
$G(P + T) = w(T) G(P)$.  Chasing through the definitions of the
connecting homomorphisms $\delta$ and~$w_2$, we now easily find the
following.

\begin{Proposition} \label{Propw2}
  The composition $w_2 \circ \delta$ is induced
  by the map $r : E(K) \setminus E[n] \To H$.
\end{Proposition}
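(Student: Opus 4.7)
The plan is to prove Proposition~\ref{Propw2} by a direct cocycle chase through diagram~\eqref{diagram-w2}. Fix $P \in E(K) \setminus E[n]$ and choose any $Q \in E(\Kbar)$ with $nQ = P$. The hypothesis $P \notin E[n]$ forces $Q \notin E[n^2]$ (otherwise $nP = n^2 Q = 0$), so $G(Q) \in \Rbar^\times$ is a well-defined unit. By definition of the Kummer connecting map, $\delta(P)$ is represented by the cocycle $\sigma \mapsto c_\sigma = {}^\sigma Q - Q \in E[n]$.

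Next I would compute $w_2$ of this class as an inverse connecting map for the bottom row of~\eqref{diagram-w2}. Because $H^1(K, \Rbar^\times) = 0$ by Hilbert~90 applied componentwise to~$R$, the image is represented by $\partial \beta$ for any $\beta \in \Rbar^\times$ satisfying ${}^\sigma \beta / \beta = w(c_\sigma)$. The natural candidate is $\beta = G(Q)$: since $G$ is Galois-equivariant (the content of the commutativity of the left-hand square) and $G(P' + T) = w(T) G(P')$ for $T \in E[n]$, we get
\[ {}^\sigma G(Q) = G({}^\sigma Q) = G(Q + c_\sigma) = w(c_\sigma)\, G(Q), \]
which is exactly the required lift. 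Hence $w_2(\delta(P))$ is represented in $\overline{H}$ by $\partial G(Q)$. Commutativity of the right-hand square, which is~\cite[Proposition~3.2]{paperII} (with $G_T$ scaled as in its proof), then gives $\partial G(Q) = r(nQ) = r(P)$, so $w_2(\delta(P)) = r(P)$ modulo $\partial R^\times$.

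The main obstacle I foresee is purely bookkeeping: ensuring that $G(Q)$ and $r(P)$ are simultaneously well-defined and that the identity ${}^\sigma G(Q) = G({}^\sigma Q)$ is rigorously justified once $G$ is reinterpreted as a $\GK$-equivariant element of $R(E)^\times$. These domain-of-definition issues are mild, handled by $P \notin E[n]$ (which forces $Q \notin E[n^2]$ and hence $Q$ out of the zero/pole locus of every $G_T$) together with the scaling conventions for $G_T$ already fixed in~\cite{paperII}. Once these are in place, the proof reduces to the two commutativities supplied by diagram~\eqref{diagram-w2}.
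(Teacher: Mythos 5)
Your cocycle chase is correct and is exactly what the paper means by ``chasing through the definitions of the connecting homomorphisms $\delta$ and $w_2$'': you lift $\delta(P) = [\sigma \mapsto {}^\sigma Q - Q]$ via $\beta = G(Q)$, use Hilbert 90 for $\Rbar^\times$ to identify $w_2$ as the inverse connecting map, and conclude $w_2(\delta(P)) = \partial G(Q) = r(P) \bmod \partial R^\times$ via the commutativity of the right-hand square of \eqref{diagram-w2}. This fills in precisely the details the paper leaves implicit, so there is no substantive difference in approach.
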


We can extend $r$ to divisors on~$E$ with support disjoint from~$E[n]$
by defining
\[ r\Bigl(\sum_P n_P(P)\Bigr) = \prod_P r(P)^{n_P} \,. \]
Then for a principal divisor $D = \diw(h)$ we find by Weil reciprocity
\begin{align*}
  r_{T_1,T_2}(D) &= r_{T_1,T_1}\bigl(\diw(h)\bigr)
                  = h\bigl(\diw(r_{T_1,T_2})\bigr) \\
                 &= \frac{h(T_1) h(T_2)}{h(\origin) h(T_1+T_2)}
                  = \frac{1}{h(\origin)} (\partial h|_{E[n]})(T_1, T_2) \,.
\end{align*}
We can scale $h$ so that $h(\origin) = 1$; then
\[ r\bigl(\diw(h)\bigr) = \partial h|_{E[n]} \in \partial R^\times \]
if $h \in K(E)^\times$. Therefore we obtain a well-defined map
\[ \widetilde{r} : E(K) \isom \Pic^0(E/K) \To H/\partial R^\times \,, \]
and we have
\[ w_2 \circ \delta = \widetilde{r} \,. \] This construction is valid
over any field~$K$ of characteristic not dividing~$n$; in particular,
it can be applied over~$K_v$ to find $\im(w_{2,v} \circ \delta_v)$.
In~\cite{SchaeferStoll} there is a discussion of how to compute images
under local descent maps, which applies \emph{mutatis mutandis} to the
situation at hand.

\bigskip 

The following theorem summarises this section.

\begin{Theorem}
  Let $K$ be a number field, $E/K$ an elliptic curve, and $n \ge 2$.
  There is an efficient algorithm that computes $\Sel^{(n)}(E/K)$, given 
  knowledge of class and unit groups of the number fields
  $K(\{T_1, T_2\})$, where $\{T_1, T_2\}$ runs through unordered pairs of
  $n$-torsion points of~$E$.
\end{Theorem}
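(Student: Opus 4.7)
The plan is to assemble the pieces already developed in the section into a concrete algorithm whose only expensive ingredients are the class and unit group computations flagged in the hypothesis.

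First, I would decompose the étale algebras $R$ and $\Sym_K^2(R)$ into products of number fields corresponding to Galois orbits on $E[n]$ and on unordered pairs of $n$-torsion points, respectively. The constituent fields of $\Sym_K^2(R)$ are precisely the $K(\{T_1,T_2\})$ of the hypothesis (together with $K$ and the fields $K(T)$, which arise as $K(\{T,\origin\})$), so class and $\SSet$-unit groups for these are available. I would compute a defining finite set of places $\SSet$ of~$K$, consisting of the archimedean places if $n$ is even, the places above~$n$, and the places of bad reduction where the Tamagawa number is not coprime to~$n$; by \cite{SchaeferStoll}, the image of $\delta_v$ is the unramified subgroup at every place outside $\SSet$.

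Next, I would compute $\widetilde{H}_\SSet$ explicitly from the exact sequence
\[ 0 \To \frac{U_\SSet(\Sym^2_K(R)) \cap H}{\partial U_\SSet(R)}
     \To \widetilde{H}_\SSet
     \To \Cl_\SSet(R)^0 \To 0 \]
derived above: generators of $\SSet$-units and of the kernel $\Cl_\SSet(R)^0$ are lifted to elements of $\Sym^2_K(R)^\times$, intersected with the condition $\rho \in \ker \partial^{(2)}_K$ (a finite system of multiplicative equations in $R\otimes R \otimes R$), and taken modulo $\partial R^\times$. This yields a finitely generated abelian group containing $w_2(\Sel^{(n)}(E/K))$.

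Finally, for each $v \in \SSet$ I would compute the image of $w_{2,v} \circ \delta_v$ as a subgroup of $H_v/\partial R_v^\times$ by enumerating a set of generators of $E(K_v)/nE(K_v)$ (this is a standard local computation, see \cite{SchaeferStoll}) and evaluating the map $\widetilde{r}$ from Proposition~\ref{Propw2} at these generators, representing the values via the rational functions $r_{T_1,T_2}$. Cutting $\widetilde{H}_\SSet$ down to the subgroup mapping into $\prod_{v \in \SSet} \im(w_{2,v} \circ \delta_v)$ under $\prod_v \res_v$ is a linear algebra problem in finitely generated abelian groups and produces $w_2(\Sel^{(n)}(E/K))$; applying $w_2^{-1}$ is not needed since the abstract group structure is preserved.

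The main obstacle, apart from the class and unit group computations assumed away, is the local computation at places in $\SSet$: one must carefully handle denominators and zeros of the rational functions $r_{T_1,T_2}$ on local generators of $E(K_v)$, shifting by a $K_v$-rational point to avoid the support of the divisors when necessary, and using Weil reciprocity as in the derivation of $\widetilde{r}$ to see that the answer is independent of the shift modulo $\partial R_v^\times$. Once this is set up, the full algorithm is just the assembly of these steps, and its efficiency is inherited from the efficiency of class and $\SSet$-unit group computations in the constituent fields of~$\Sym^2_K(R)$.
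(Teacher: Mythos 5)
Your proposal is correct and follows essentially the same route as the paper's own proof: it assembles the algorithm from the set $\SSet$, the exact sequence computing $\widetilde{H}_\SSet$ via $\SSet$-class and $\SSet$-unit groups of the constituent fields of $\Sym^2_K(R)$, and the local images computed via $\widetilde{r}$ from Proposition~\ref{Propw2}, then intersects. The additional care you take with zeros and poles of the $r_{T_1,T_2}$ (shifting by a local point and invoking Weil reciprocity) is a sensible elaboration of a detail the paper delegates to~\cite{SchaeferStoll}.
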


\begin{proof}
  The algorithm proceeds in the following steps.
  \begin{enumerate}\renewcommand{\theenumi}{\arabic{enumi}.}
    \item Let $\SSet$ be the set of places~$v$ of~$K$ that divide~$n$
          or such that the Tamagawa number of $E/K_v$ is not coprime
          to~$n$, together with the real places of~$K$ when $n$ is even.
    \item Construct the \'etale algebras $R$ and~$\Sym^2_K(R)$.
    \item Compute an explicit representation of~$\widetilde{H}_\SSet$ as defined
          above.
    \item For each $v \in \SSet$ construct
          $\widetilde{H}_v = H_{K_v}/\partial R_v^\times$,
          together with the map $\res_v : \widetilde{H}_\SSet \to \widetilde{H}_v$, and
          find the local image $\widetilde{r}(E(K_v)) \subset \widetilde{H}_v$.
    \item Compute $\Sel^{(n)}(E/K)$ as
          \[ \bigcap_{v \in \SSet} \res_v^{-1}\bigl(\widetilde{r}(E(K_v))\bigr)
               \subset \widetilde{H}_\SSet \,.
          \]
  \end{enumerate}
  The third step (computation of~$\widetilde{H}_\SSet$) relies on the computation
  of the $\SSet$-class groups and $\SSet$-unit groups of the constituent fields
  of~$\Sym^2_K(R)$, which are of the form $K(\{T_1,T_2\})$ as in the
  statement of the theorem. The $\SSet$-class and $\SSet$-unit groups can be
  computed from the class and unit groups.
\end{proof}

As it stands, this result is rather theoretical, since the number
fields that occur are (in most cases) too large for practical
computations: as mentioned earlier, when $n$ is a prime~$p$, then
usually there is a component of~$\Sym^2_K(R)$ that is a field
extension of degree $(p^2-1)(p^2-p)/2$ of~$K$. Even when $n = 3$ (and the
degree is~$24$), this makes unconditional computation of the class group nearly
impossible in reasonable time with currently available implementations.
However, when $n$ is a prime, there is a
better alternative, which we describe next.


\subsection{Computation of the Selmer group III: Using $w_1$}
\label{sec:w1}

There is a group homomorphism (recalled from~\cite{paperI})
\[ w_1 : H^1(K,E[n]) \to R^\times/(R^\times)^n \,, \]
which is obtained by applying cohomology to the exact sequence
\begin{equation}
\label{eqn-wd}
 0 \To E[n] \stackrel{w}{\To} \mu_n(\Rbar) 
   \stackrel{\partial}{\To} \partial( \mu_n(\Rbar) ) \To 0 \,.
\end{equation}
It is shown in \cite{DSS} and~\cite{SchaeferStoll} that when $n$ is a
prime~$p$, the map $w_1$ is injective (for any field~$K$ of
characteristic different from~$p$), and a description of the image is
given. This can be used for computing the $n$-Selmer group as a
subgroup of $R^\times/(R^\times)^n$. 

Let $v$ be a place of~$K$.  Then there is an analogous homomorphism
\[ w_{1,v} : H^1(K_v, E[n]) \to R_v^\times/(R_v^\times)^n \,. \]
The maps fit together in a commutative diagram (whose rows are usually not exact)
\[ \xymatrix{ E(K) \ar[r]^-{\delta} \ar[d]
                & H^1(K, E[n]) \ar[d] \ar[r]^-{w_1}
                & R^\times/(R^\times)^n \ar[d]^{\res_v} \\
              E(K_v) \ar[r]^-{\delta_v}
                & H^1(K_v, E[n]) \ar[r]^-{w_{1,v}}
                & R_v^\times/(R_v^\times)^n.
            }
\]
If $w_1$ and all the $w_{1,v}$ are injective 
(as happens in the case $n$ is prime), then this tells us that the
$n$-Selmer group can be realised as an abstract group via
\[ \Sel^{(n)}(E/K)
      \isom R(\SSet,n) \cap \im(w_1) \cap
           \bigcap_{v \in \SSet} \res_v^{-1}\bigl(\im(w_{1,v} \circ \delta_v)\bigr) \,;
\]
see~\cite{SchaeferStoll}. Here, $R(\SSet,n) \subset R^\times/(R^\times)^n$ is
the subgroup of elements~$\alpha$ unramified outside~$\SSet$ (\ie such that the
extension $R(\sqrt[n]{\alpha})/R$ of \'etale algebras is unramified outside~$\SSet$).
As before, the group $R(\SSet,n)$ can be determined from a knowledge of the $\SSet$-class
and $\SSet$-unit groups of the various number fields that occur in the splitting
of~$R$ according to the Galois orbits on~$E[n]$.

Note that in~\cite{SchaeferStoll}, the \'etale algebra~$L$ (denoted there
by~$A$) is used instead of $R = K \times L$. 
If $\alpha \in R^\times$ represents some $\xi \in H^1(K,E[n])$,
then $\alpha(\origin) \in (K^\times)^n$, so without loss of generality we
can assume that $\alpha(\origin) = 1$. Therefore all the relevant
information is already contained in the $L$-component.

If $n$ is not a prime, then the map $w_1$ need not be injective: 
see \SSS\ref{sec:convert} for an example with~$n=4$.
For the realisation of~$w_1 \circ \delta$, consider the following diagram
analogous to~\eqref{diagram-w2}:
\[ \xymatrix{ 0 \ar[r] & E(\Kbar)[n] \ar[r] \ar[d]^{w}
                       & E(\Kbar) \ar[r]^{n} \ar@{-->}[d]^{G}
                       & E(\Kbar) \ar[r] \ar@{-->}[d]^{F}
                       & 0 \\
              0 \ar[r] & \mu_n(\overline{R}) \ar[r]
                       & \overline{R}^\times \ar[r]^{n}
                       & \overline{R}^\times \ar[r]
                       & 0
            }
\]
Here, $F \in R(E)^\times$ is the function such that $F_T(nQ) = G_T(Q)^n$ 
for all $T \in E[n]$.
The divisor of~$F_T$ is $n (T) - n (\origin)$, and if
$F(nQ) = G(Q)^n$ with $G \in R(E)^\times$, then $F$ induces a well-defined
map $F : E(K) \setminus E[n] \To R^\times/(R^\times)^n$, independent of
the particular choice of~$F$. In the same way as discussed after
Proposition~\ref{Propw2}, we can extend this map~$F$ to a map on divisors
with support disjoint from~$E[n]$, which only depends on the linear equivalence
class, and therefore gives rise to a homomorphism
\[ \widetilde{F} : E(K) \isom \Pic^0(E/K) \To R^\times/(R^\times)^n \,. \]
This leads to a new proof of the following well-known fact.

\begin{Proposition} \label{Propw1}
  The composition $w_1 \circ \delta : E(K) \to R^\times/(R^\times)^n$
  is given by~$\widetilde{F}$.
\end{Proposition}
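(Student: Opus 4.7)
The plan is to mimic the diagram-chase alluded to for Proposition~\ref{Propw2}, now applied to the commutative square with rows the Kummer sequences for $E$ and for $\Rbar^\times$ and vertical maps $w$, $G$, $F$ displayed just above the statement. I will first verify the identity on $E(K)\setminus E[n]$ by unwinding the snake-lemma recipe, and then extend to all of $E(K)$ using that both maps are group homomorphisms.

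For $P \in E(K)\setminus E[n]$, choose $Q\in E(\Kbar)$ with $nQ=P$; then $\delta(P)$ is represented by the cocycle $c_\sigma := \sigma(Q)-Q \in E[n]$. By the construction of $w_1$ via the long exact sequence of~\eqref{eqn-wd}, together with the Kummer identification $H^1(K,\mu_n(\Rbar)) = R^\times/(R^\times)^n$, the class $w_1(\delta(P))$ is computed by pushing $c_\sigma$ forward to a cocycle $\sigma\mapsto w(c_\sigma)$ in $\mu_n(\Rbar)$, writing $w(c_\sigma)=\sigma(\eta)/\eta$ for some $\eta\in\Rbar^\times$ (possible by Hilbert~90 applied componentwise to $\Rbar$), and taking $w_1(\delta(P))=\eta^n \bmod (R^\times)^n$. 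The natural candidate $\eta=G(Q)$ works: the defining identity $G_T(Q+S)=e_n(S,T)\,G_T(Q)$ for $S\in E[n]$ packages into $G(Q+S)=w(S)\cdot G(Q)$, and because $G\in R(E)^\times$ is defined over~$K$, Galois commutes with evaluation, so
\[ \sigma(G(Q)) = G(\sigma Q) = G(Q + c_\sigma) = w(c_\sigma)\cdot G(Q). \]
Combined with the defining relation $F(nQ)=G(Q)^n$, this yields $\eta^n=F(P)$ and hence $w_1(\delta(P))\equiv F(P)\pmod{(R^\times)^n}$, which by definition is $\widetilde F(P)$.

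To finish, I would note that both $w_1\circ\delta$ and $\widetilde F$ are group homomorphisms $E(K)\to R^\times/(R^\times)^n$ (for $\widetilde F$ this is the content of the paragraph defining it via $\Pic^0$), and they agree on the cofinite subset $E(K)\setminus E[n]$, so they agree everywhere; alternatively, one can repeat the same cocycle computation directly on a degree-$0$ divisor representative away from $E[n]$. The only delicate point is the identity $\sigma(G(Q))=G(\sigma Q)$, but this is just the Galois-equivariance of $G$ built into its construction as a $\GK$-invariant element of $R(E)^\times$, and was already invoked in Proposition~\ref{Propw2}. Unlike in the $w_2$ situation, no careful normalisation of $G$ is needed here, since any ambiguity in $\eta$ up to elements of $R^\times$ is killed on passing to $\eta^n \bmod (R^\times)^n$; so I expect no serious obstacle beyond this bookkeeping.
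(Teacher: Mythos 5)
Your proof is correct and follows essentially the same route as the paper, which does not give an explicit argument but instead points to the commutative diagram and the extension of $F$ to divisors, leaving the cocycle chase implicit; you simply unwind it: push $\delta(P)=[\sigma\mapsto\sigma(Q)-Q]$ through $w$ to $\mu_n(\Rbar)$, identify $\eta=G(Q)$ as a Hilbert-90 trivialisation using the Galois-equivariance $\sigma(G(Q))=G(\sigma Q)=w(\sigma Q-Q)G(Q)$, and conclude $\eta^n=F(P)$. The one small point worth noting is that the ``agree on the cofinite subset $E(K)\setminus E[n]$'' step tacitly assumes that subset generates $E(K)$, which fails in the degenerate case $E(K)\subseteq E[n]$; your fallback of running the same cocycle computation on a degree-$0$ divisor representative supported away from $E[n]$ (exactly what the paper's $\widetilde F$ is built to allow) covers that case, so there is no real gap.
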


Again, this works for any field~$K$ of characteristic not
dividing~$n$, and so we can use it for evaluating the local
maps~$\delta_v$. The algorithm for computing $\Sel^{(n)}(K, E)$ then
proceeds as before, but now working within~$R$ instead
of~$\Sym_K^2(R)$.  The functions~$F_T$ can be evaluated at a given
point using Miller's algorithm~\cite{Miller}, which follows the
computation of $n T$ and keeps track of the functions $r_{T_1,T_2}$
witnessing the intermediate sums; it is not necessary to compute an
expression for~$F_T$ in terms of the \coordinates{} on a Weierstra{\ss}
equations of~$E$ (which will be quite complicated when $n$ is large).
In practice, however, even moderately large~$n$ quickly make
computations infeasible, so $n$ will be rather small, and it is no
problem to work with an explicit expression for~$F_T$ as a function.
Such an expression is also helpful for computing $\eps$ as defined
by~\eqref{defeps} below.

\begin{Theorem} \label{Thmw1} Let $K$
  be a number field, $E/K$ an elliptic curve and $p$ a prime number.
  There is an efficient algorithm that computes
  $\Sel^{(p)}(E/K)$, given knowledge of class and unit groups of the
  number fields $K(T)$, where $T$ runs through points of order~$p$
  on~$E$.
\end{Theorem}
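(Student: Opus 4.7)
The plan is to mirror the proof of the previous theorem, but now working with the map $w_1$ in place of~$w_2$, so that all computations take place inside~$R^\times/(R^\times)^p$ rather than inside~$H/\partial R^\times$. The essential point, recalled from \cite{DSS} and~\cite{SchaeferStoll}, is that for prime~$p$ the maps $w_1$ and $w_{1,v}$ are injective, so that
\[ \Sel^{(p)}(E/K) \isom R(\SSet,p) \cap \im(w_1)
        \cap \bigcap_{v \in \SSet} \res_v^{-1}\bigl(\im(w_{1,v} \circ \delta_v)\bigr) \]
as a subgroup of~$R^\times/(R^\times)^p$. The algorithm then has the same shape as before, and I would present it as a list of steps.

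First, I would take $\SSet$ to be the set of places of~$K$ dividing~$p$, together with those of bad reduction where the Tamagawa number is not coprime to~$p$, and adding the real places when $p = 2$; by~\cite{SchaeferStoll} this is enough to capture all ramification of Selmer classes. Second, I would construct the étale algebra $R = \Map_K(E[p],\Kbar) = K \times L$, splitting $L$ into its field constituents $K(T)$ as $T$ runs over representatives of the Galois orbits on $E[p] \setminus \{\origin\}$. Third, using the assumed knowledge of $\SSet$-class and $\SSet$-unit groups of each such $K(T)$, I would assemble the subgroup $R(\SSet,p) \subset R^\times/(R^\times)^p$ of classes unramified outside~$\SSet$, via the standard exact sequence relating units, ideals and class groups applied componentwise. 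The image of~$w_1$ inside $R(\SSet,p)$ is cut out by explicit conditions from \cite{DSS,SchaeferStoll} (for instance, the Weil-pairing condition on the norm from~$L$ to~$K$), which are easily checked.

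Fourth, for each $v \in \SSet$ I would compute $R_v = R \otimes_K K_v$ and the restriction map $\res_v : R(\SSet,p) \to R_v^\times/(R_v^\times)^p$, and then determine the local image $\im(w_{1,v} \circ \delta_v) = \widetilde{F}(E(K_v))$ using Proposition~\ref{Propw1}; concretely, $\widetilde{F}$ is evaluated by Miller's algorithm on a generating set of $E(K_v)/pE(K_v)$, whose structure is known from the theory of formal groups and the component group (exactly as in~\cite{SchaeferStoll}). Finally, I would intersect these preimages with $R(\SSet,p) \cap \im(w_1)$ using linear algebra over $\F_p$ on discrete logarithms in the various finite groups $R_v^\times/(R_v^\times)^p$, and return the result.

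The hardest step in practice is undoubtedly the third one: forming $R(\SSet,p)$ requires the $\SSet$-class and $\SSet$-unit groups of the fields $K(T)$, which have degree up to $p^2-1$ over~$K$, and this is precisely where the hypothesis of the theorem is used. Everything else—constructing~$R$, evaluating $\widetilde{F}$ via Miller's algorithm, and performing the local linear algebra over~$\F_p$—is routine once this data is available, so the theorem reduces, as claimed, to the availability of those class and unit group computations.
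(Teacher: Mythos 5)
Your proposal follows essentially the same route as the paper: it cites the identity $\Sel^{(p)}(E/K) \isom R(\SSet,p) \cap \im(w_1) \cap \bigcap_{v\in\SSet}\res_v^{-1}\bigl(\im(w_{1,v}\circ\delta_v)\bigr)$ from \SSS\ref{sec:w1}, constructs the same $\SSet$, works inside $R^\times/(R^\times)^p$ using the $\SSet$-class and $\SSet$-unit groups of the constituent fields $K(T)$, and evaluates the local images via $\widetilde F$ as in Proposition~\ref{Propw1}. The paper's proof is the same five-step algorithm (deferring to \cite{SchaeferStoll} for the details you spell out, such as Miller's algorithm and the $\F_p$-linear algebra), so there is no substantive difference.
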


\begin{proof}
  The algorithm proceeds in the following steps (see~\cite{SchaeferStoll}).
  \begin{enumerate}\renewcommand{\theenumi}{\arabic{enumi}.}
    \item Let $\SSet$ be the set of places~$v$ of~$K$ that divide~$p$
          or such that the Tamagawa number of $E/K_v$ is divisible
          by~$p$, together with the real places of~$K$ when $p = 2$.
    \item Construct the \'etale algebra $R$.
    \item Compute an explicit representation 
          of~$R_1 = R(\SSet,p) \cap \im(w_1)$.
    \item For each $v \in \SSet$ construct $H_v = R_v^\times/(R_v^\times)^p$,
          together with the map $\res_v : R(\SSet,p) \to H_v$, and
          find the local image $\widetilde{F}(E(K_v)) \subset H_v$.
    \item Compute $\Sel^{(p)}(E/K)$ as
          \[ R_1 \cap \bigcap_{v \in \SSet}
                        \res_v^{-1}\bigl(\widetilde{F}(E(K_v))\bigr)
               \subset R(\SSet,p) \,.
          \]
  \end{enumerate}
\end{proof}


\subsection{Changing algebras}
\label{sec:convert}

For the purpose of constructing explicit models of $n$-coverings representing
the various elements of the $n$-Selmer group, we need to represent the
Selmer group elements by elements $\rho \in H$. So after computing
$\Sel^{(p)}(E/K)$ as in Theorem~\ref{Thmw1}, we need to convert the elements
$\alpha \in R^\times$ we obtain as representatives into elements $\rho \in H$.
Note that for this purpose it is helpful to choose a small 
representative for the class of $\alpha$ up to $n$th powers, by applying
the method in~\cite[\SSS2]{FisherANTS2008} over each constituent field
of $R$.

Recall that $H$ is the subgroup of elements $\rho \in \Sym^2_K(R)^\times$ 
satisfying
\begin{equation} \label{assoc0}
  \rho(T_1,T_2+T_3) \rho(T_2,T_3) = \rho(T_1,T_2) \rho(T_1+T_2, T_3)
  \text{\; for all $T_1,T_2,T_3 \in E[n]$.}
\end{equation}

\begin{Lemma}
\label{lem:convert}
  Let $\alpha \in R^\times$ represent an element in the image
  of $w_1$, \ie $\alpha (R^\times)^n = w_1(\xi)$ for some 
  $\xi \in H^1(K, E[n])$.
  Then there exists $\rho \in H$ satisfying $\partial \alpha = \rho^n$ and 
  \begin{equation} \label{prod}
    \alpha(T) = \textstyle\prod_{i=0}^{n-1} \rho(T,iT)
                  \qquad \text{ for all } T \in E[n].
  \end{equation}
  Moreover if $\rho \in H$ satisfies~(\ref{prod}) then 
  $\partial \alpha = \rho^n$ and 
  $\rho \, \partial R^\times = w_2(\xi')$ 
  for some $\xi' \in H^1(K, E[n])$ with $w_1(\xi) = w_1(\xi')$.
\end{Lemma}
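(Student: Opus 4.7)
The plan is to prove both directions by exhibiting an auxiliary lift $\mu \in \overline{R}^\times$ that simultaneously witnesses the $w_1$- and $w_2$-descriptions of a common element of $H^1(K, E[n])$.

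For the forward direction, I would start by representing $\xi$ by a 1-cocycle $\sigma \mapsto P_\sigma \in E[n]$. The hypothesis $\alpha (R^\times)^n = w_1(\xi)$ means, after unwinding the construction of $w_1$ via Hilbert~90 applied componentwise to $\overline{R}^\times$, that one may choose $\mu \in \overline{R}^\times$ with $\mu^n = \alpha$ and $\sigma(\mu)/\mu = w(P_\sigma)$ for all $\sigma \in \GK$. Set $\rho = \partial \mu \in \overline{H}$. Then $\rho^n = \partial \alpha$, the associativity identity~\eqref{assoc0} follows by a one-line computation from the definition of $\partial$, and Galois invariance is automatic because the cocycle factor $w(P_\sigma)$ lies in $\ker \partial$; hence $\rho \in H$. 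The product formula~\eqref{prod} then reduces to a telescoping identity, collapsing $\prod_{i=0}^{n-1} \rho(T, iT)$ to $\mu(T)^n \cdot \mu(\origin)/\mu(nT) = \mu(T)^n = \alpha(T)$, where the only input used is $nT = \origin$ for $T \in E[n]$.

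For the converse, suppose $\rho \in H$ satisfies~\eqref{prod}. The surjectivity of $\partial \colon \overline{R}^\times \to \overline{H}$ (from the exact sequence that defines $w_2$) lets me pick any $\mu' \in \overline{R}^\times$ with $\partial \mu' = \rho$. Re-running the same telescoping computation shows $(\mu')^n(T) = \prod_{i=0}^{n-1} \rho(T, iT) = \alpha(T)$, so $(\mu')^n = \alpha$ and therefore $\partial \alpha = \rho^n$. Galois invariance of $\rho$ forces $\sigma(\mu')/\mu' \in \ker \partial = w(E[n])$, producing a cocycle $\sigma \mapsto P'_\sigma \in E[n]$ whose class $\xi' \in H^1(K, E[n])$ is exactly $w_2^{-1}(\rho\, \partial R^\times)$ by the definition of $w_2$. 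Evaluating $w_1(\xi')$ through Kummer theory now returns the class of $(\mu')^n = \alpha$, so $w_1(\xi') = \alpha (R^\times)^n = w_1(\xi)$.

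The whole argument is conceptual rather than computational: both $w_1$ and $w_2$ are built from the same Hilbert~90 lift $\mu \in \overline{R}^\times$, and~\eqref{prod} is precisely the compatibility condition between the two recipes. I do not expect a genuine obstacle; the only step requiring real care is the telescoping, where one must use that the index set consists of multiples $iT$ of the fixed $T$ (so the product collapses via $\mu(\origin)/\mu(nT) = 1$) and not arbitrary sums of $n$-torsion points.
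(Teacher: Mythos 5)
Your proof is correct. The paper itself gives no argument here, deferring to Lemma 3.8 of the first paper in the series, but your route is the natural one and agrees with how the relevant maps are set up in this paper: $w_1$ is $w_*$ composed with the Kummer isomorphism $H^1(K,\mu_n(\Rbar))\isom R^\times/(R^\times)^n$, so a representative $\alpha$ of $w_1(\xi)$ may indeed be promoted (by multiplying the Hilbert~90 lift by an element of $R^\times$) to a genuine lift $\mu\in\Rbar^\times$ with $\mu^n=\alpha$ and $\sigma(\mu)/\mu=w(P_\sigma)$; Galois-invariance of $\rho=\partial\mu$ follows exactly because $w(E[n])=\ker\partial_\Kbar$; and the telescoping identity $\prod_{i=0}^{n-1}\rho(T,iT)=\mu(T)^n\cdot\mu(\origin)/\mu(nT)=\mu(T)^n$ gives~\eqref{prod}, including the case $T=\origin$. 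For the converse, surjectivity of $\partial_\Kbar:\Rbar^\times\to\overline{H}$ and the same telescoping argument recover $(\mu')^n=\alpha$, and the class $\xi'$ you extract from $\sigma(\mu')/\mu'\in w(E[n])$ is by definition the preimage of $\rho\,\partial R^\times$ under $w_2$, with $w_1(\xi')$ equal to the Kummer class of $(\mu')^n=\alpha$, as required. One small cosmetic point: associativity~\eqref{assoc0} and symmetry of $\rho$ are automatic for anything in the image of $\partial_\Kbar$ (this is exactly why $\overline{H}$ is the target of $\partial_\Kbar$ in the exact sequence), so you need not present them as separate checks.
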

\begin{Proof}
  This is \cite[Lemma 3.8]{paperI}.
\end{Proof}

To convert $\alpha$ to~$\rho$ we first extract an $n$th root of
$\partial \alpha$ in~$\Sym_K^2(R)$.
We then multiply by an $n$th root of unity in~$\Sym_K^2(R)$ to 
find $\rho$ satisfying~\eqref{assoc0} and~\eqref{prod}.
The simplest case, which occurs frequently in practice,
is when $\Sym_K^2(R)$ contains no non-trivial 
$n$th roots of unity. There is then a unique choice of~$\rho$.
In general we can avoid checking all the conditions in~(\ref{assoc0})
by determining in advance the number of solutions for $\rho$.

\begin{Definition}
\label{gammadef}
Let $\Gamma$ be the group (under pointwise operations)
of all maps $\gamma : E[n] \to \mu_n$ satisfying
\[ \frac{\gamma(\sigma T_1) \gamma(\sigma T_2)}{ \gamma 
(\sigma (T_1+ T_2))} =  \sigma \left(\frac{\gamma( T_1)  
\gamma(T_2)}{ \gamma (T_1+T_2)} \right)  \]
for all $\sigma \in \GK$ and $T_1,T_2 \in E[n]$.
\end{Definition}

Let $G \isom \Gal(K(E[n])/K)$ be the subgroup of $\GL_2(\Z/n\Z)$
describing the action of $\GK$ on $E[n]$.
The action of $\GK$ on $\mu_n$ is given by the
determinants of these matrices. Hence $\Gamma$ depends only on $G$.
Indeed, given generators for $G$, it is easy to compute $\Gamma$
using linear algebra over $\Z/n\Z$.
The following lemma shows that the number of solutions for $\rho$
in Lemma~\ref{lem:convert} is $\# \partial \Gamma = (\# \Gamma) / n^2$.

\begin{Lemma}
\label{partialgamma}
\begin{enumerate}
\item There is an exact sequence of abelian groups
\[  0 \ra E[n] \stackrel{w}{\ra}
 \Gamma \stackrel{\partial}{\ra} \partial \Gamma \ra 0. 
\]
\item $\partial \Gamma = \{ \rho \in H : 
  \textstyle\prod_{i=0}^{n-1} \rho(T,iT) = 1 \text{ for all }
T \in E[n] \}$.
\end{enumerate}
\end{Lemma}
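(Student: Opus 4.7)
The plan is to recognize that the defining condition on $\gamma \in \Gamma$ is exactly that $\partial\gamma$, a priori an element of $(\Rbar \otimes_{\Kbar} \Rbar)^\times$, is Galois-invariant and therefore lies in $\Sym^2_K(R)^\times$. Once this reformulation is in hand, both parts of the lemma follow from the $\Kbar$-exact sequence $0 \to E[n] \stackrel{w}{\to} \Rbar^\times \stackrel{\partial}{\to} \overline{H} \to 0$ recalled in \SSS\ref{sec:w2}, together with a telescoping identity for the product $\prod_{i=0}^{n-1}\rho(T,iT)$.

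For part~(i), I first verify $w(E[n]) \subseteq \Gamma$: each $w(S)$ is a group homomorphism $E[n] \to \mu_n$, so $\partial w(S) = 1$ and the Galois condition holds trivially. Surjectivity of $\partial : \Gamma \to \partial\Gamma$ is tautological from the definition of $\partial\Gamma$. To compute the kernel, observe that $\gamma \in \ker\partial|_\Gamma$ is precisely a homomorphism $E[n] \to \mu_n$, and by non-degeneracy of the Weil pairing every such homomorphism is $w(S)$ for a unique $S \in E[n]$; injectivity of $w$ is automatic.

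For part~(ii), the inclusion ``$\subseteq$'' reduces to a telescoping calculation: for $\rho = \partial\gamma$,
\[
  \prod_{i=0}^{n-1}\rho(T,iT)
    = \prod_{i=0}^{n-1} \frac{\gamma(T)\gamma(iT)}{\gamma((i+1)T)}
    = \gamma(T)^n \cdot \frac{\gamma(0)}{\gamma(nT)}
    = \gamma(T)^n = 1,
\]
using $nT = 0$ and $\gamma(T) \in \mu_n$; that $\partial\gamma \in H$ is immediate from $\partial^{(2)} \circ \partial = 1$ combined with the Galois equivariance of $\partial\gamma$. For the reverse inclusion, given $\rho \in H$ satisfying the product condition, I lift it via the $\Kbar$-exact sequence to a map $\gamma : E[n] \to \Kbar^\times$ with $\partial\gamma = \rho$. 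The product condition applied to this lift forces $\gamma(T)^n = 1$, so $\gamma$ actually takes values in~$\mu_n$; and the Galois equivariance of $\partial\gamma = \rho \in \Sym^2_K(R)^\times$ is exactly what is required for $\gamma \in \Gamma$.

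The only point requiring care is the distinction between maps defined over $\Kbar$ and those whose coboundary descends to $K$, but no genuine obstacle arises since everything is detected by $\partial\gamma$. The counting consequence $\#\partial\Gamma = (\#\Gamma)/n^2$ mentioned before the lemma then follows immediately from exactness in~(i).
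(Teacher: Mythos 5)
Your proof is correct and follows essentially the same route as the paper: part~(i) by restricting the exact sequence~\eqref{eqn-wd} to $\Gamma$ after checking $w(E[n])\subseteq\Gamma$, and part~(ii) by using surjectivity of $\partial$ over $\Kbar$ to write $\rho=\partial\gamma$ and then the telescoping identity $\prod_{i=0}^{n-1}\rho(T,iT)=\gamma(T)^n$ to translate the product condition into $\gamma$ taking values in $\mu_n$, with the Galois-equivariance of $\partial\gamma=\rho$ giving exactly the $\Gamma$-condition. You have merely made some steps explicit that the paper leaves implicit (the telescoping, the identity $\partial^{(2)}\circ\partial=1$, and the kernel computation via non-degeneracy of the Weil pairing), while the paper cites~[I, Corollary~3.6] in place of your appeal to the exact sequence over~$\Kbar$ — these are the same fact.
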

\begin{Proof} 
(i) Since $\Gamma \subset \mu_n(\Rbar)$, this is obtained by 
restricting the exact sequence~(\ref{eqn-wd}). We note that if
$\gamma \in w(E[n])$ then $\gamma : E[n] \to \mu_n$ is a group 
homomorphism and so clearly $\gamma \in \Gamma$. \\
(ii) By \cite[Corollary 3.6]{paperI} every $\rho \in H$ 
can be written as $\rho = \partial \gamma$ for some $\gamma \in \Rbar^\times$.
We note that if $\rho = \partial \gamma$ then 
\[\prod_{i=0}^{n-1} \rho(T,iT) = \gamma(T)^n.\] Hence the group on the
right of (ii) consists of elements $\partial \gamma$ where $\gamma \in
\mu_n(\Rbar) = \Map(E[n],\mu_n)$ and $\partial \gamma : E[n] \times
E[n] \to \mu_n$ is Galois equivariant.  From
Definition~\ref{gammadef}, we recognise this group as $\partial
\Gamma$.
\end{Proof}

\begin{Lemma}
\label{kerw1}
The kernel of $w_1 : H^1(K,E[n]) \to R^\times/(R^\times)^n$ 
is isomorphic to 
$(\partial \mu_n(\Rbar))^{G_{K}} / \partial (\mu_n(R)) 
          = \partial \Gamma / \partial (\mu_n(R))$.
\end{Lemma}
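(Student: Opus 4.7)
The plan is to apply the long exact sequence of Galois cohomology to the short exact sequence~\eqref{eqn-wd} and then read off the kernel of~$w_1$ as the appropriate quotient.

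More precisely, taking $G_K$-cohomology of
\[ 0 \To E[n] \stackrel{w}{\To} \mu_n(\Rbar) \stackrel{\partial}{\To}
   \partial(\mu_n(\Rbar)) \To 0 \]
yields the exact sequence
\[ \mu_n(R) \stackrel{\partial}{\To} (\partial\mu_n(\Rbar))^{G_K}
     \To H^1(K,E[n]) \stackrel{w_1}{\To} H^1(K,\mu_n(\Rbar)), \]
using that $\mu_n(\Rbar)^{G_K}=\mu_n(R)$. Thus
$\ker(w_1) \isom (\partial\mu_n(\Rbar))^{G_K}/\partial\mu_n(R)$,
provided we verify that the map labelled $w_1$ in this sequence really agrees with the map defined earlier as going into $R^\times/(R^\times)^n$.

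For this, I would decompose $R = \prod_i K_i$ according to the Galois orbits on $E[n]$. Then $\mu_n(\Rbar) = \prod_i \mathrm{Ind}^{G_K}_{G_{K_i}} \mu_n(\Kbar)$ as a $G_K$-module, so Shapiro's lemma together with Hilbert~90 applied to each factor gives
\[ H^1(K, \mu_n(\Rbar)) = \prod_i H^1(K_i,\mu_n) = \prod_i K_i^\times/(K_i^\times)^n = R^\times/(R^\times)^n, \]
and chasing through the definitions shows that the connecting map in the long exact sequence coincides with the $w_1$ of the paper. (This is essentially the observation that $w_1$ was defined in~\cite{paperI} precisely via this construction.)

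It remains to identify $(\partial\mu_n(\Rbar))^{G_K}$ with $\partial\Gamma$. By Definition~\ref{gammadef}, $\Gamma$ consists of those $\gamma \in \mu_n(\Rbar) = \Map(E[n],\mu_n)$ for which $\partial\gamma$ is Galois-equivariant, so $\partial\Gamma \subseteq (\partial\mu_n(\Rbar))^{G_K}$ is immediate. Conversely, given $\rho \in (\partial\mu_n(\Rbar))^{G_K}$, pick any $\gamma\in\mu_n(\Rbar)$ with $\partial\gamma = \rho$ (which exists by surjectivity of $\partial$ in~\eqref{eqn-wd}); then the Galois-equivariance of $\rho=\partial\gamma$ means precisely that $\gamma\in\Gamma$, so $\rho\in\partial\Gamma$. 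Combining this identification with the exact sequence above gives the claim. I do not anticipate a serious obstacle; the only delicate point is making sure that the long exact sequence map lands in $R^\times/(R^\times)^n$ via the correct Shapiro/Hilbert~90 identification, and this is standard.
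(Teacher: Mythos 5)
Your proof is correct and takes essentially the same route as the paper, which simply cites the long exact sequence of Galois cohomology for~\eqref{eqn-wd} together with the fact that $w_1$ factors as $w_*$ followed by the Kummer/Shapiro isomorphism $H^1(K,\mu_n(\Rbar))\isom R^\times/(R^\times)^n$. You fill in the routine details (the isomorphism via Shapiro plus Hilbert~90 on each factor of $R$, and the direct check that $(\partial\mu_n(\Rbar))^{G_K}=\partial\Gamma$ from Definition~\ref{gammadef}) that the paper leaves implicit.
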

\begin{Proof}
This is seen by taking Galois cohomology of the short exact sequence
(\ref{eqn-wd})
and recalling that $w_1$ is the composite of $w_*$ and an isomorphism
$H^1(K,\mu_n(\Rbar)) \isom R^\times/(R^\times)^n$.
\end{Proof}

In the case $n=p$ is prime
it is shown in \cite{DSS}, \cite{SchaeferStoll} 
that $w_1$ is injective. Then by Lemma~\ref{kerw1} 
each of the $\# \partial \Gamma$ possibilities
for $\rho$ represent the same element 
of $H/\partial R^ \times$. The lemma also gives a formula for 
$\# \partial \Gamma$. 
Writing $\dim$ for the dimension of a $\Z/p\Z$-vector space we have
\[ \dim  \partial \Gamma  = \dim \partial (\mu_p(R))  
  = \dim \mu_p(R) - \dim E(K)[p]. \]

For general $n$ we can use
Lemma~\ref{kerw1} to check whether $w_1$ is injective. 
As promised in \cite[\SSS3]{paperI}, 
we give an example to show that $w_1$ need not always be injective.

\begin{Example} Taking $n=4$ we consider the elliptic
curve $E/\Q$ with Weierstra{\ss} equation \[y^2 = x^3 + x + 2/13.\] 
A calculation using division polynomials shows that $[\Q(E[4]):\Q] = 48$. 
Hence $G \subset \GL_2(\Z/4\Z)$ is a subgroup of index $2$.
There are precisely three subgroups of index $2$ in $\GL_2(\Z/4\Z)$.
These may be viewed as kernels of $1$-dimensional characters, one of 
which factors via the determinant and another via the natural map 
to $\GL_2(\Z/2\Z) \isom S_3$. 
Thus the $3$ possibilities for $G$ correspond to whether 
$-1$, $\Delta_E$ or $-\Delta_E$ is a rational
square. 
In this example $\Delta_E = -(112/13)^2$. Computing $\Gamma$ from $G$ 
we find $\# \Gamma = 2^8$. It follows by Lemma~\ref{partialgamma}(i) 
that $\#\partial \Gamma = 2^4$. 
The points of order $2$ and $4$ on
$E$ each form a single Galois orbit, and their fields of definition
do not contain $\sqrt{-1}$. 
Hence $\#E(\Q)[4] = 1$ and $\#\partial(\mu_4(R)) = \#\mu_4(R) = 2^3$. 
It follows by Lemma~\ref{kerw1} that $w_1$ has kernel of order $2$.
\end{Example}

This example shows it would be difficult to do a $4$-descent directly
using the map $w_1$. Fortunately there are better ways of doing
$4$-descent: see the introduction for references.


\subsection{Initial equations for the covering curves}
\label{compcov}
Let $C_\xi \to E$ be the $n$-covering corresponding to some
$\xi \in H^1(K, E[n])$. 
In this section we are concerned with finding an explicit model 
for~$C_\xi$ as a genus one normal curve of degree $n^2$ in~$\PP^{n^2-1}$.

We explain first why we need to work with $\rho \in H$ 
representing~$\xi$, rather than with $\alpha \in R^\times$.
Recall the commutative diagram
\[ \xymatrix{ E(K) \ar[rr]^-{\delta} \ar[drr]_{\widetilde{F}}
               && H^1(K,E[n]) \ar[d]^{w_1} \\
               && R^\times/(R^\times)^n }
\]
from Proposition~\ref{Propw1}. The $K$-rational points on~$C_\xi$
should map to the points in~$E(K)$ whose image under~$\delta$
is~$\xi$, so whose image under $\widetilde{F} = w_1 \circ \delta$ is
$\alpha (R^\times)^n$.  This suggests defining the covering curve by
\begin{equation} \label{Calpha}
  \{(P, z) \in E \times \BA(R) : F(P) = \alpha z^n\} \,,
\end{equation}
where $\BA(R)$ denotes the affine space over~$K$ corresponding to the
$K$-vector space~$R$ (note that $\BA(R)$ was denoted $\mathcal R$
in~\cite{paperII}).  Working over~$\Kbar$ and writing $z_T$ for~$z(T)$,
which is the value at~$T$ of $z \in \Rbar$ considered as a map
on~$E[n]$, we note that the~$z_T$ can be used as a set of \coordinates{}
on~$\BA(R)$; hence the equation in~(\ref{Calpha}) may be written as
\[ F_T(P) = \alpha(T) z_T^n  \qquad \text{for all } T \in E[n] \,. \]
We see that for each point $P \in E \setminus E[n]$, there are
$n^2$ independent $n$th roots to take to obtain the~$z_T$. This makes
$n^{n^2}$ choices for~$z$, yet the covering map $C_\xi \to E$ 
has degree only~$n^2$.  The equation for $T = \origin$ reads
$1 = z_\origin^n$ (without loss of generality, $\alpha(\origin) = 1$), so
we can eliminate a factor of~$n$ by setting $z_\origin = 1$. Also, by
considering $z$ with $z_\origin = 1$ as a representative of a point in
the projective space $\PP(R)$ associated to~$\BA(R)$ and taking the
closure in $E \times \PP(R)$, we obtain a curve $C'$ that,
unlike~(\ref{Calpha}), maps surjectively to $E$ by the first projection. 
(We have `filled the gaps' above $P = \origin$ where the $F_T$ have 
a pole.) Now $C'$ is a projective curve covering~$E$ by a map of
degree~$n^{n^2-1}$. In the case $n = 2$, discussed further 
in \SSS\ref{twodesc} below, this curve splits into two
isomorphic components, and after projecting to~$\PP(R)$ we obtain an
intersection of two quadrics defining the desired curve~$C_{\xi}$.

If $n > 2$, then the curve~$C'$ defined above
splits into $n^{n^2-3}$ geometric components.
To see this, we may work over an algebraically closed field~$K$; then
we can take $\alpha = 1$.  We obtain an embedding
\[ \iota : E \To C', \qquad P \longmapsto \bigl(nP,
\widetilde{G}(P)\bigr) \] where $\widetilde{G} : E \to \PP(R)$ is
induced by $G : E \setminus E[n] \to \BA(R)$ (recall that $F(nP) =
G(P)^n$).  The image of $\iota$
is an $n$-covering of~$E$ by projection onto the
first factor, and the action of~$S \in E[n]$ on it is given by $z_T
\mapsto e_n(S,T) z_T$. In addition, $\mu_n(\Rbar)$ acts on~$C'$ in an
obvious way which is compatible with the action of~$E[n]$ and the map
$w : E[n] \to \mu_n(\Rbar)$. Taking into account that $\mu_n =
\mu_n(\Kbar) \subset \mu_n(\Rbar)$ acts trivially on~$\PP(R)$, this
shows that the components of~$C'$ are parametrised by
$\mu_n(\Rbar)/(w(E[n]) \mu_n)$.
Over arbitrary fields~$K$, and with~$\alpha$ representing 
any $\xi \in H^1(K, E[n])$, it can be shown (by an argument
similar to that in \cite[Section 3]{paperII}) that $C_\xi$ 
is a component of $C'$. The set of geometric components of~$C'$
is then isomorphic (as a set with Galois action) to the
Galois module $\mu_n(\Rbar)/(w(E[n]) \mu_n)$. In some cases,
this module has only one element defined over~$K$, so there is only
one component of~$C'$ that is defined over~$K$, which must be the one
we seek. But in general, there can be a large number of $K$-rational
components; and in any case, the equations one obtains (having degree
$n>2$) are not of the desired form, and it seems rather difficult to
pick out the relevant component.

Instead, we work with $\rho \in H$ representing~$\xi$. The analogue of the
above diagram is provided by Proposition~\ref{Propw2}:
\[ \xymatrix{ E(K) \ar[rr]^-{\delta} \ar[drr]_{\widetilde{r}}
               && H^1(K,E[n]) \ar[d]^{w_2} \\
               && H/\partial R^\times } \]
We can now define our covering curve using the relation $r(P) = \rho \partial z$.
Setting $z_\origin = 1$ as above, and then homogenising, this reads as
\begin{equation} \label{Crho}
  C_\rho = \{(P, z) \in E \times \PP(R)
                : r(P) z_\origin \Delta(z) = \rho \cdot (z \otimes z)\}
         \subset E \times \PP(R)
\end{equation}
with the covering map $C_\rho \to E$ given by projection to the first factor.
Note that the equation in~\eqref{Crho} is quadratic in~$z$. 
Over~$\Kbar$, in terms of the
\coordinates{}~$z_T$, the equations read
\[ r_{T_1,T_2}(P) z_\origin z_{T_1+T_2} = \rho(T_1,T_2) z_{T_1} z_{T_2} \]
with $T_1, T_2 \in E[n]$.
It is shown in \cite{paperII} that projecting to~$\PP(R)$ (which is equivalent
to eliminating $P \in E$) gives $n^2(n^2-3)/2$ linearly independent quadrics
defining $C_\rho \subset \PP(R) \isom \PP^{n^2-1}$ as a genus one normal curve
of degree $n^2$.

The equations for $C_\rho \subset \PP(R)$ are obtained from
\begin{equation} \label{quads1}
  (X-x_T) z_\origin^2 - \rho(T,-T) z_T z_{-T}
\end{equation}
for $T \in E[n] \setminus\{\origin\}$ and
\begin{equation} \label{quads2}
  (\Lambda_T - \lambda(T_1,T_2)) z_\origin z_T - \rho(T_1,T_2) z_{T_1} z_{T_2}
\end{equation}
for $T_1,T_2,T \in E[n] \setminus\{\origin\}$ with $T_1+T_2 = T$,
by taking differences to eliminate the indeterminates $X$ and $\Lambda_T$.
In fact the equations recorded in
\cite[Proposition 3.7]{paperII} are these differences.


\subsection{Improved equations for the covering curves}
\label{improve_eqns}
Suppose $\rho \in H$ represents 
an element~$\xi \in H^1(K, E[n])$ with trivial obstruction
(as defined in~\cite{paperI,paperII}), for example a Selmer group
element~$\xi \in \Sel^{(n)}(E/K)$. In the previous section we
showed how to write the covering curve $C_\rho$ as a curve
of degree~$n^2$ in~$\PP^{n^2-1}$.
We now want to write it as a curve of degree $n$
in~$\PP^{n-1}$. We recall how to do this using the Segre embedding method,
as described in \cite[\SSS5.3]{paperI} and~\cite{paperII}. 

First we fix the scaling of the $F_T$ so that each has leading coefficient
$1$ when expanded as a power series in the local parameter $x/y$ at $\origin$.
Then we define $\eps \in (R \otimes_K R)^\times$ by
\begin{equation} \label{defeps}
  \eps(T_1,T_2) = \frac {F_{T_1+T_2}(P)}{F_{T_1}(P) F_{T_2}(P-T_1)} 
\end{equation}
(which is independent of $P \in E$), compare Step~2 on p.~154 in~\cite{paperI}. 
Let $*_{\eps \rho}$ be the new multiplication on $R$ defined by
\[  z_1 *_{\eps \rho} z_2 = \Tr (\eps \rho \cdot (z_1 \otimes z_2)) \,. \]
Then the obstruction algebra
$A_\rho = (R,+,*_{\eps \rho})$ is a central simple algebra over~$K$
of dimension~$n^2$. 
Since we are assuming that $\rho$ represents an element with trivial obstruction
we have
$A_\rho \isom \Mat_n(K)$. In \SSS\ref{blackbox} we discuss how to
find such an isomorphism explicitly.

Recall that we have equations for $C_\rho \subset \PP(R)$. Since $R$ and~$A_\rho$
have the same underlying vector space, and we have now trivialised
the obstruction algebra, we get $C_\rho \subset \PP(\Mat_n)$. We project
$C_\rho$ away from the identity matrix onto the hyperplane of
trace zero matrices. 
As shown in~\cite{paperII}, the result is a curve $\widetilde{C}$
lying in the locus of rank~1 matrices. In other words the inclusion 
of this curve in $\PP(\Mat_n)$ factors via the Segre embedding
\[ \PP^{n-1} \times (\PP^{n-1})^\vee \to \PP(\Mat_n) \,. \]
Projecting onto a row or column gives 
either the degree-$n$ curve $C \to \PP^{n-1}$ we are looking for, 
or its dual $C \to (\PP^{n-1})^\vee$, which is a curve of degree~$n^2-n$.

Writing $z \in R = K \times L$ as $z = (z_\origin, z')$, the
projection to the subspace of trace-zero matrices corresponds to
eliminating~$z_\origin$ from the equations (recall that $\Tr(M_T) = 0$
for $T \neq \origin$, where $M_T$ gives the action of~$T$ on the
ambient space~$\PP^{n-1}$ of~$C$).  We note that if $T_1 + T_2 = T'_1
+ T'_2 = T$ and $\{T_1,T_2\} \neq \{T_1',T_2'\}$ then
$\lambda(T_1,T_2) \neq \lambda(T'_1,T'_2)$.  Assuming $n \ge 3$, it is
clear by~\eqref{quads1} and~\eqref{quads2} that eliminating
$z_\origin$ by linear algebra will reduce the dimension of the vector
space of quadrics by exactly~$n^2$. So after trivialising the algebra
we have $n^2(n^2-5)/2$ quadrics that are a basis for the space of
quadrics vanishing on
\[ \widetilde{C} \subset \PP({\Tr=0}) \isom \PP^{n^2-2} \,. \]
Together with the quadrics that are a product of a linear form
and the trace form, these span the space of quadrics vanishing on 
\[ \widetilde{C}  \subset \PP(\Mat_n) \isom \PP^{n^2-1}. \]
\begin{Lemma}
\label{lem:elim}
  Let $C \subset \PP^{n-1}$ be a genus one normal curve
  with homogeneous ideal $I(C) \subset K[x_1, \ldots, x_n]$. 
  Let $\widetilde{C}$ be the image of the map 
  $C \to \PP^{n-1} \times (\PP^{n-1})^\vee \to \PP(\Mat_n)$
  with homogeneous ideal
  $I(\widetilde{C}) \subset K[z_{11}, z_{12}, \ldots, z_{nn}]$. 
  If $f \in K[x_1,\ldots,x_n]$ is a homogeneous form, then
  \[ f(x_1, \ldots, x_n) \in I(C)
      \iff f(z_{11},z_{21}, \ldots,z_{n1}) \in I(\widetilde{C}) \,.
  \]
\end{Lemma}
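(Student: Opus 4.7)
The plan is to parametrize $\widetilde{C}$ explicitly via the Segre embedding and reduce the claim to a one-line homogeneity identity. I would take coordinates $(x_1:\cdots:x_n)$ on $\PP^{n-1}$ and $(y_1:\cdots:y_n)$ on $(\PP^{n-1})^\vee$, so that the Segre map sends $((x_i),(y_j))$ to the matrix with entries $z_{ij} = x_i y_j$. Since $\widetilde{C}$ is the image of $C \to \PP^{n-1}\times(\PP^{n-1})^\vee \to \PP(\Mat_n)$, and the first factor of the first arrow is the given embedding of~$C$, pulling the coordinate functions $z_{ij}$ back along the induced isomorphism $C \xrightarrow{\sim} \widetilde{C}$ yields the rational functions $z_{ij}|_{\widetilde{C}} = x_i \, y_j$ on~$C$.

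The key computation is then the identity
\[
  f(z_{11},z_{21},\ldots,z_{n1})\big|_{\widetilde{C}}
    \;=\; f(x_1 y_1,\ldots,x_n y_1)
    \;=\; y_1^{\,d}\,f(x_1,\ldots,x_n),
\]
valid for any homogeneous $f$ of degree~$d$ by homogeneity in the $x_i$. The forward direction is immediate: if $f \in I(C)$ then the right-hand side vanishes on~$C$, hence $f(z_{11},\ldots,z_{n1}) \in I(\widetilde{C})$. For the converse, if $f(z_{11},\ldots,z_{n1}) \in I(\widetilde{C})$ then $y_1^{\,d}\,f(x_1,\ldots,x_n) = 0$ on the irreducible curve~$C$, whose function field is a domain, so the conclusion $f(x_1,\ldots,x_n) \in I(C)$ will follow once I know that $y_1 \not\equiv 0$ on~$C$.

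The only step requiring a genuine argument is therefore the non-vanishing of~$y_1$ on~$C$. I expect this to be the main obstacle but a minor one: the hypothesis that the first-column projection $\widetilde{C} \to \PP^{n-1}$ lands on~$C$ (a curve, not a point) is precisely the statement that not all $z_{i1}$ vanish on~$\widetilde{C}$, and since $z_{i1}|_{\widetilde{C}} = x_i y_1$, this forces $y_1 \not\equiv 0$. Once this is in hand, both directions of the lemma follow directly from the displayed identity; the rest is formal.
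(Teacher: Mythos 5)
Your proof is correct and is essentially the same as the paper's. The paper's entire proof is the one-liner ``This is clear since the dual curve spans $(\PP^{n-1})^\vee$,'' which encodes exactly the identity $f(z_{11},\ldots,z_{n1})|_{\widetilde C}=y_1^{\,d}f(x_1,\ldots,x_n)$ together with the non-vanishing of $y_1$ on $C$; your non-degeneracy input, phrased as ``the first-column projection lands on $C$, not a point,'' is the same fact in different clothing. One small imprecision: you call this a ``hypothesis,'' but it is not stated in the lemma — it is a background fact from the Segre construction in \SSS\ref{improve_eqns} (equivalently, non-degeneracy of the dual curve), which the paper likewise cites without re-deriving. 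It would be cleaner to invoke that the dual image of $C$ in $(\PP^{n-1})^\vee$ spans, rather than presenting it as a hypothesis, but the substance of the argument is right.
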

\begin{Proof}
  This is clear since the dual curve spans $(\PP^{n-1})^\vee$.
\end{Proof}

The quadrics vanishing on $C \subset \PP^{n-1}$ may now be computed by
linear algebra. If $n \ge 4$ then these quadrics define $C$.  (In fact
they generate the homogeneous ideal.) When $n = 3$ 
the equation for $C$ is a ternary cubic. In \SSS\ref{threedesc} we explain
how this too may be computed using linear algebra.


\section{Application to 2-descent}
\label{twodesc}

We show that the method sketched above reduces in the case $n=2$ to
classical 2-descent, as described, for example, in \cite{CaL}, 
\cite{Schaefer}, and with algorithmic details in \cite{Simon_ell_paper}. 

Let $E$ be given by a short Weierstra{\ss} equation:
\[ E : \quad y^2 = f(x) = (x-e_1)(x-e_2)(x-e_3), \]
with $e_i \in \Kbar$. 
 Let $T_i=(e_i,0)$.
We have $R = K \times L$ where $L = K[e]$ is the cubic $K$-algebra
generated by $e$ with minimal polynomial $f(x)$. Let $\alpha \in
R^\times$ represent a Selmer group element. We write $\alpha_i =
\alpha(T_i)$. Without loss of generality the $K$-component of~$\alpha$
is~$1$, and we may regard~$\alpha$ as an element of~$L^\times$.  
It is well known (see \cite[Theorem 1.1]{Schaefer})
that $w_1$ induces an isomorphism
\[ H^1(K,E[2]) \isom \ker \big( L^\times/(L^\times)^2 \stackrel{N_{L/K}}{\ra}
K^\times/(K^\times)^2 \big). \] 
Therefore $\alpha$ has square norm, say 
$N_{L/K}\alpha = \alpha_1 \alpha_2 \alpha_3 = b^2$ for some $b \in K^\times$.  
Taking $F_{T_i} = x - e_i$ in (\ref{Calpha}), and setting $z_\origin = 1$,
we obtain equations
\begin{equation}
\label{twocov}
\begin{aligned}
  x - e_i &= \alpha_i z_i^2 \qquad \quad \text{ for } i = 1,2,3 \\
y &= \pm b z_1 z_2 z_3 
\end{aligned}
\end{equation}
where $z_i = z(T_i)$. Alternatively, since
\[ r(T_i,T_j) = \left\{ \begin{array}{ll} x - e_i & \text{ if } i=j \\
    y/(x-e_k) & \text{ if } \{i,j,k\} = \{1,2,3\}, \end{array} \right.
\] and $\alpha \in R^\times$ corresponds to $\rho \in H$ where
\[ \rho(T_i,T_j) = \left\{ \begin{array}{ll} \alpha_i & \text{ if } i=j \\
b/\alpha_k & \text{ if } \{i,j,k\} = \{1,2,3\}, \end{array} \right. \]
we obtain the same equations using~(\ref{Crho}). The components of $r$ 
and $\rho$ where one of the torsion points is $\origin$ are all trivial.
Notice that switching the sign of $b$ multiplies $\rho$ by $\partial
\gamma$ where $\gamma =(1,-1) \in K \times L$.

The first three equations in ~(\ref{twocov}) may be written (after
homogenisation) as
\[ x-eu_3^2 = \alpha u^2, \]
where $u=u_0+u_1e+u_2e^2$ is an ``unknown'' element of $L^\times$.
Expanding, eliminating~$x$, and equating coefficients of powers of~$e$ gives two
quadrics in $u_0,u_1,u_2,u_3$, defined over~$K$, which define
$C_\rho \subset \PP^3$ as a curve of degree $4$. We would like to
write $C_\rho$ as a double cover of $\PP^1$. 
The classical approach is to observe that one 
of the quadrics does not involve~$u_3$
and hence defines a conic~$S$ in $\PP^2(u_0,u_1,u_2)$; the projection
to~$S$ is a double cover~$C_\rho\to S$.  
If $\alpha$ is a Selmer group element then $S\isom\PP^1$.
In practice one expresses the isomorphism $\PP^1\to S$ as a parametrisation
$u_j=q_j(v_0,v_1)$ for $j=1,2,3$, where the $q_j$ are binary
quadratics; substituting into the first quadric in the~$u_j$, in which
the only term involving $u_3$ is (a non-zero constant times) $u_3^2$,
we find an equation for $C_\rho$ of the form $u_3^2=Q(v_0,v_1)$ where
$Q$ is a binary quartic.

To obtain the $2$-covering map $C_\rho\to E$ we simply substitute
in~(\ref{twocov}) to recover~$x$, while $y$ is determined up to sign
by $y^2=N_{L/K}(\alpha u^2)$.  Hence we have two possibilities for the
covering map, which differ by negation on~$E$; these $2$-coverings are
equivalent.

We now compare with the Segre embedding method as described in 
Section~\ref{improve_eqns}.
The obstruction algebra is $A = (R,+,*_{\eps \rho})$ where 
using~(\ref{defeps}) we compute
\[ \eps(T_i,T_j) = \left\{ \begin{array}{ll} 1/f'(e_i) & \text{ if } i=j \\
1/(e_i-e_j) & \text{ otherwise. } \end{array} \right. \]
Let $S = \{ z \in \PP(A) : \Trd(z) = \Nrd(z) = 0 \}$. Our general
recipe says that if we project $C_\rho$ to the plane $\{ \Trd(z) =
0\}$, then the result lies in $S$. This gives $C_\rho$ as a double
cover of $S$. In fact, $S$ is defined by $z_\origin=0$ and $z*_{\eps \rho}z
=0$. The latter works out as
\[ \sum_{i=1}^3 \frac{\alpha_i}{f'(e_i)} z_i^2 = 0 \]
which is one of the quadrics in the pencil defining $C_\rho$. Hence
our conic $S$ is the same as that considered in the classical approach. 
The problems of trivialising $A$ and finding a rational point on $S$
are clearly equivalent.   
Once we have trivialised $A$ we get an isomorphism $S \isom \PP^1$
by projecting to a row or column (it does not matter which, since $\PP^1$
is self-dual). Exactly as before we can then write 
$C_{\rho}$ as a double cover of $\PP^1$.


\section{Application to 3-descent}
\label{threedesc}

We give further details of our algorithms in the case $n=3$.
Let $E$ be an elliptic curve over a number field $K$.  We fix an
isomorphism $E[3] \isom (\Z/3\Z)^2$, say $T_{ij} \mapsto (i,j)$, and let
$T = T_{11}$. 
We assume that the Galois action on $E[3]$ is generic, in the sense that
$\rho_{E,3} : \GK \to \GL_2(\Z/3\Z)$ is surjective\footnote{If $K= \Q$ 
then there are exactly $8$ possibilities for $\im(\rho_{E,3})$
up to conjugacy. Our \Magma\ implementation relies on a similar
analysis of all $8$ cases.}.
Then there is a tower of number fields
\[ \xymatrix{ \hspace{4.5em} M = K(E[3]) \ar@{-}[d]^2 \\ \,\, M^+ \ar@{-}[d]^3 
 \\ \hspace{3.5em} L = K(T) \ar@{-}[d]^2 \\ \,\, L^+ \ar@{-}[d]^4 \\ K }  \]
where $M^+$ is the subfield of $M$ fixed by $T_{ij} \mapsto T_{ji}$ 
and $L^+$ is the subfield of $L$ fixed by $\sigma: T \mapsto -T$. We write
$\iota_{ij} : L \to M$ for the embedding given by $T \mapsto T_{ij}$. 
Thus $\iota_{11}$ is the natural inclusion and $\iota_{ij} \circ \sigma = 
\iota_{-i,-j}$.

There are two orbits for the action of $\GK$ on $E[3]$, with
representatives $\origin$ and $T$, and six orbits for the action of
$\GK$ on $E[3] \times E[3]$, with representatives 
\[  (\origin,\origin), \,\, (T,\origin), \,\,(\origin,T), \,\,(-T,-T),
 \,\,(T,-T), \,\,(T_{10},T_{01})  \]
chosen so that each pair sums to either $\origin$ or $T$.
Using these representatives we identify $R = K \times L$ and
(writing $r = (r_1,r_2)$, $s = (s_1,s_2)$ with $r_1, s_1 \in K$, $r_2, s_2 \in L$)
\begin{equation}
\label{tensorformula}
\begin{aligned}
R \otimes_K R & \isom K \times L \times L \times L \times L \times M \\
r \otimes s & \mapsto \left(r_1 s_1, r_2 s_1, r_1 s_2, 
\sigma(r_2) \sigma(s_2), r_2 \sigma(s_2), \iota_{10}(r_2) \iota_{01}(s_2) 
\right).
\end{aligned}
\end{equation}
The comultiplication $\Delta : R \to R \otimes_K R$ is given by 
\[ (r_1, r_2) \mapsto (r_1, r_2, r_2, r_2, r_1, r_2) \]
and the trace map $\Tr: R \otimes_K R \to R$ by
\begin{equation}
\label{traceformula}
  (a,b_1,b_2,b_3,b_4,c) \mapsto \left(a + \Tr_{L/K}(b_4) ,
b_1 + b_2 + b_3 + \Tr_{M/L}(c) \right). 
\end{equation}

In \SSS\ref{sec:w1} we showed how to compute $\alpha =(1,a) \in
R^\times$ representing an element of $\Sel^{(3)}(E/K)$.  We now
compute
\[  u = \sqrt[3]{ a \sigma(a)}, \qquad v= \sqrt[3]{\iota_{10}(a)
\iota_{01}(a)/a}, \]
by extracting cube roots in $L^+$ and $M^+$. Since $\det \rho_{E,3}$ is
the cyclotomic character, these fields have
no non-trivial cube roots of unity. Hence $u$ and $v$ are uniquely 
determined. The elements $\eps$ and $\rho$ in $R \otimes_K R$ are defined by
\begin{equation}
\label{epsrho}
\begin{aligned}
\eps &= (1,1,1,1,1,e_3(T_{10},T_{01})) \\
\rho &= (1,1,1,\sigma(a)/u,u,v) 
\end{aligned}
\end{equation}
where $e_3 : E[3] \times E[3] \to \mu_3$ is the Weil pairing.
The reader is warned that this $\eps$ is different from the one given
in~(\ref{defeps}). We explain how to correct for this in 
\SSS\ref{exteqns}.
The sign convention we use for the Weil pairing does matter,
but is not worth fixing here since we can correct for it 
later if necessary.

Let $u_1, \ldots, u_8$ be a basis for $L$ over $K$. (In 
\SSS\ref{compobs}
we describe how to make a good choice of basis.) Then $R$ has
basis $r_1, \ldots ,r_9$ where $r_1 = (1,0)$ and $r_{i+1} = (0,u_i)$.
Structure constants $c_{ijk} \in K$ for the obstruction algebra 
$A = (R,+,*_{\eps \rho})$ are now given by
\[ \Tr (\eps \rho \cdot (r_i \otimes r_j)) = \textstyle\sum_{k=1}^9
c_{ijk} r_k. \] The $c_{ijk}$ are computed using the formulae
(\ref{tensorformula}), (\ref{traceformula}) and (\ref{epsrho}).  Since
$\alpha$ represents a Selmer group element we know that 
$A_\rho \isom \Mat_3(K)$.
In \SSS\ref{blackbox} we show how to find
such an isomorphism explicitly. In other words, we find (non-zero)
matrices $M_1, \ldots, M_9 \in \Mat_3(K)$ satisfying
\begin{equation}
\label{trivmats}
  M_i M_j = \textstyle\sum_{k=1}^9 c_{ijk} M_k. 
\end{equation}

We fix a Weierstra{\ss} equation $y^2 = x^3 + a x + b$
for $E$ and let $T=(x_T,y_T)$. The tangent line to $E$ at $T$
has slope $\lambda_T = \lambda(T,T) = (3 x_T^2 + a)/(2y_T)$.
We define linear forms in indeterminates $z_1, \ldots, z_8$,
\begin{align*}
z_T &=  \textstyle\sum_{i=1}^8 u_i z_i & 
z_{10} &=  \textstyle\sum_{i=1}^8 \iota_{10}(u_i) z_i \\
z_{-T} &=  \textstyle\sum_{i=1}^8 \sigma(u_i) z_i &
z_{01} &=  \textstyle\sum_{i=1}^8 \iota_{01}(u_i) z_i 
\end{align*} 
where $u_1, \ldots, u_8$ is our basis for $L$ over $K$. Let 
 $Q_1$ and $Q_2$ be the quadrics with coefficients in $L^+$ 
and $M^+$ defined by
\begin{align*}
Q_1(z_0, \ldots,z_8) &=  x_T z_0^2 + \rho_5 z_T z_{-T} \\
Q_2(z_0, \ldots,z_8) &= (\lambda_T + \kappa_T) 
z_0 z_T  - \rho_4 z_{-T}^2 + \rho_6 z_{10} z_{01} 
\end{align*}
where the $\rho_i$ are the components of $\rho$, and
$\kappa_T = \frac{1}{3}(\iota_{10}(\lambda_T) + 
\iota_{01}(\lambda_T) - \lambda_T)$. Writing 
each coefficient in terms of fixed $K$-bases for $L^+$ and $M^+$,
we obtain $[L^+:K]=4$ quadrics from $Q_1$ and 
$[M^+:K]=24$ quadrics from $Q_2$. In
\cite{paperI} these are called the quadrics of types I and II. 
We choose our basis
for $L^+$ so that its first element is $1$, and ignore
the first type I quadric. The result is 27 quadrics in 
$K[z_0, \ldots, z_8]$. 

\begin{Lemma} These $27$ quadrics generate the homogeneous ideal
of the degree~$9$ curve $C_\rho \subset \PP(R) = \PP^8$.
\end{Lemma}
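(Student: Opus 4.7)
The plan is to deduce this from the general theory recalled in \SSS\ref{compcov} and proved in \cite{paperII}. That theory says that $C_\rho \subset \PP(R) \cong \PP^8$ is a genus one normal curve of degree~$n^2 = 9$, and that the space of quadrics vanishing on it has dimension $n^2(n^2-3)/2 = 27$, spanned by the $\overline{K}$-rational combinations of~(\ref{quads1}) and~(\ref{quads2}) that eliminate the auxiliary variables $X$ and~$\Lambda_T$. Since $9 \ge 4$, the standard theorem on projectively normal embeddings of elliptic curves implies that these quadrics generate the homogeneous ideal. Thus it suffices to show that the 27 listed $K$-quadrics form a basis for this space.

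To do so I would verify, via Galois descent, that $Q_1$ and $Q_2$ package the relevant differences. For type~I: in the generic case, the four unordered pairs $\{T', -T'\}$ with $T' \in E[3] \setminus \{\origin\}$ form a single $\GK$-orbit with field of definition~$L^+$. The quadric $Q_1 \in L^+[z_0, \ldots, z_8]$ represents the ``$X$-free part'' of~(\ref{quads1}) at the distinguished pair $\{T, -T\}$, so that its four Galois conjugates give all four instances of~(\ref{quads1}) up to the common $K$-rational term $X z_0^2$. Expanding $Q_1$ in a $K$-basis of $L^+$ whose first element is $1$, the first $K$-component recovers $X z_0^2$ as a $K$-quadric (and is therefore discarded), while the other $[L^+:K] - 1 = 3$ components are the desired differences of~(\ref{quads1}) defined over~$K$.

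For type~II: the 32 unordered pairs $\{T_1, T_2\}$ with $T_1, T_2, T_1 + T_2 \neq \origin$ split into 8 diagonal pairs $\{-T,-T\}$ (one per non-zero~$T$) and a single $\GK$-orbit of 24 off-diagonal pairs. For each $T$, eliminating $\Lambda_T$ from the four equations in~(\ref{quads2}) with sum~$T$ yields $3$ linearly independent differences, most naturally written as (off-diagonal) minus (diagonal for $\{-T,-T\}$). The quadric $Q_2 \in M^+[z_0, \ldots, z_8]$ is this difference for $T = T_{11}$ and the off-diagonal pair $\{T_{10}, T_{01}\}$; the correction term $\kappa_T$ is chosen precisely so that the coefficients land in~$M^+$ rather than~$M$. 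Its $24 = [M^+:K]$ Galois conjugates range over all 24 ``off-minus-diagonal'' differences, and correspond to the 24 type~II quadrics obtained by expanding $Q_2$ in a $K$-basis of~$M^+$. Linear independence of the resulting $3 + 24 = 27$ quadrics is then a monomial-support check: the type~I quadrics involve only $z_0^2$ and the monomials $z_{T'} z_{-T'}$, while the type~II quadrics involve $z_0 z_T$, $z_{T'}^2$, and $z_{T_1} z_{T_2}$ for off-diagonal pairs; within each type, differences corresponding to distinct values of~$T$ involve disjoint sets of quadratic monomials.

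The main obstacle is the Galois-orbit bookkeeping: identifying~$Q_2$ as the correct $M^+$-rational packaging of the 24 ``off-minus-diagonal'' differences, and checking that the precise formula for the correction term $\kappa_T$ is what is needed for $M^+$-rationality. This uses the explicit description of the generic Galois action $\rho_{E,3}(\GK) = \GL_2(\F_3)$ on $E[3]$, the tower of fields $K \subset L^+ \subset L \subset M^+ \subset M$, and the fact that the stabiliser of the configuration $(T_{11}, \{T_{10}, T_{01}\})$ in $\GL_2(\F_3)$ has order~$2$, so that the orbit has size $24$ as required.
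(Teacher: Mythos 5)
Your overall plan is the same as the paper's: reduce to the result of~\cite{paperII} that the 27 quadrics obtained from~(\ref{quads1}) and~(\ref{quads2}) by eliminating~$X$ and the~$\Lambda_T$ define~$C_\rho$ and generate its ideal, and then match those 27 quadrics with the ones produced by expanding~$Q_1$ and~$Q_2$ over $K$-bases of~$L^+$ and~$M^+$. Your orbit bookkeeping (four conjugates of~$\{T,-T\}$ over~$L^+$; $8 + 24 = 32$ unordered pairs with nonzero sum, of which $24$ form one orbit over~$M^+$; $3 + 24 = 27$ differences after eliminating one equation per~$T$) is correct and is exactly what the paper does implicitly.

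However, you have misdiagnosed what makes the identification of~$Q_2$ work, and this is a genuine gap. You present the check as a \emph{rationality} statement, asserting that ``$\kappa_T$ is chosen precisely so that the coefficients land in~$M^+$ rather than~$M$.'' That is not the issue: the true coefficient of $z_0 z_T$ in the difference of~(\ref{quads2}) at $(T_{10},T_{01})$ and at $(-T,-T)$ is $\lambda(T_{10},T_{01}) - \lambda(-T,-T) = \lambda(T_{10},T_{01}) + \lambda_T$, and this lies in~$M^+$ \emph{automatically}: $\lambda(T_{10},T_{01})$ is symmetric in the two points, so it is fixed by the involution $\tau: T_{ij}\mapsto T_{ji}$, and $\lambda_T \in L \subset M^+$. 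No correction is needed for rationality. What actually has to be proved is the \emph{identity} $\kappa_T = \lambda(T_{10},T_{01})$, i.e.\ that
\[
\lambda(T_{10},T_{01}) \;=\; \tfrac{1}{3}\bigl(\lambda(T_{10},T_{10}) + \lambda(T_{01},T_{01}) - \lambda(T,T)\bigr),
\]
which expresses the chord slope in terms of the three tangent slopes so that~$Q_2$ can be written using only $\lambda_T$ and the embeddings $\iota_{10},\iota_{01}$. This is Lemma~\ref{slopes} in the paper: if $T_1+T_2+T_3 = \origin$ in $E[3]\setminus\{\origin\}$ then $\lambda(T_1,T_2) = \tfrac{1}{3}\sum_i\lambda(T_i,T_i)$, proved by comparing $f_1 f_2 f_3 = f^3$ (tangent lines versus the cube of the chord) as rational functions and expanding at~$\origin$ in the local parameter $x/y$. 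Your plan does not contain this step, and without it the match between~$Q_2$ and the differences of~(\ref{quads2}) is simply asserted, not verified.
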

\begin{Proof}
Let $v_1=1,v_2,v_3,v_4$ be a basis for $L^+$ over $K$. We write
\[ x_T z_0^2 + \rho(T,-T) z_{T} z_{-T} = \textstyle\sum_{i=1}^4 v_i
q_i(z_0,\ldots,z_8) \]
where $q_1, \dots,q_4 \in K[z_0, \ldots,z_8]$. 
Then~(\ref{quads1}) becomes
\begin{align*}
X &= q_1(z_0, \ldots, z_8) \\
0 &= q_i(z_0, \ldots, z_8) \qquad \text{ for } i = 2,3,4. 
\end{align*}
We eliminate $X$ by ignoring the first quadric $q_1$.

Next we take $(T_1,T_2) =(T_{10},T_{01})$
and $(-T,-T)$ in~(\ref{quads2}). Subtracting to eliminate $\Lambda_T$ 
gives the quadric
\[ (\lambda(T_{10},T_{01}) - \lambda(-T,-T) ) z_0 z_T - \rho(-T,-T) z_{-T}^2
+ \rho(T_{10},T_{01}) z_{10} z_{01}. \]
Assuming that 
\begin{equation}
\label{eqnu}
\lambda(T_{10},T_{01}) = \tfrac{1}{3} 
\big( \lambda(T_{10},T_{10}) +\lambda(T_{01},T_{01}) - \lambda(T,T) \big)
\end{equation}
this is precisely the quadric $Q_2$.
To complete the proof we note that~(\ref{eqnu}) is a special case 
of the following lemma.
\end{Proof}

\begin{Lemma}
\label{slopes}
Let $T_1,T_2,T_3 \in E[3] \setminus \{\origin\}$ with $T_1 + T_2 + T_3 = \origin$.
Then 
\[ \lambda(T_1,T_2) = \tfrac{1}{3} \sum_{i=1}^3 \lambda(T_i,T_i). \]
\end{Lemma}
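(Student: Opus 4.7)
The plan rests on recognising that the hypothesis $T_i \in E[3] \setminus \{\origin\}$ forces the $T_i$ to be flex points of the cubic~$E$: the tangent line at $T_i$ meets $E$ with multiplicity~$3$ at~$T_i$, since the third intersection point would be $-2T_i = T_i$. Writing $\ell_i = y - \lambda(T_i,T_i)\, x - c_i$ for this tangent regarded as a rational function on~$E$ (with $c_i$ the appropriate intercept), we therefore have $\divv(\ell_i) = 3(T_i) - 3(\origin)$. Meanwhile, the condition $T_1+T_2+T_3 = \origin$ means the three points are collinear, so the secant line $\ell_0 = y - \lambda(T_1,T_2)\, x - c_0$ through them has divisor $(T_1)+(T_2)+(T_3) - 3(\origin)$.

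Thus $\ell_1 \ell_2 \ell_3$ and $\ell_0^3$ are rational functions on~$E$ with the same divisor, and so agree up to a multiplicative constant $\kappa \in K^\times$. I will determine both $\kappa$ and the slope relation by expanding both sides as Laurent series at $\origin$ with respect to the standard uniformiser $t = -x/y$.

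Using the standard expansions $x = t^{-2} + O(1)$ and $y = -t^{-3} + O(t^{-1})$, any rational function of the form $y - m x - c_*$ has $t$-expansion $-t^{-3}\bigl(1 + m\, t + O(t^2)\bigr)$ near $\origin$. Hence
\[ \ell_1 \ell_2 \ell_3 = -t^{-9}\Bigl(1 + \bigl(\textstyle\sum_i \lambda(T_i,T_i)\bigr)\, t + O(t^2)\Bigr) \]
and
\[ \ell_0^3 = -t^{-9}\bigl(1 + 3\, \lambda(T_1,T_2)\, t + O(t^2)\bigr). \]
Matching the coefficients of $t^{-9}$ on both sides of $\ell_1 \ell_2 \ell_3 = \kappa\, \ell_0^3$ gives $\kappa = 1$, and matching the coefficients of $t^{-8}$ then yields the identity $\sum_{i=1}^3 \lambda(T_i,T_i) = 3\,\lambda(T_1,T_2)$, as desired.

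The only non-routine ingredient is the divisor calculation for the flex tangents, which uses the flex property of $3$-torsion points; everything else is a mechanical series expansion at infinity. I do not expect any serious obstacle.
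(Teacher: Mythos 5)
Your proposal is correct and is essentially the argument given in the paper: both use that $T_1,T_2,T_3$ are flexes so the flex tangents have divisor $3(T_i)-3(\origin)$, that the collinearity $T_1+T_2+T_3=\origin$ gives the chord divisor $(T_1)+(T_2)+(T_3)-3(\origin)$, hence $\ell_1\ell_2\ell_3 = \kappa\,\ell_0^3$, and then a Laurent expansion at $\origin$ in the parameter $\pm x/y$ to read off $\kappa=1$ and the slope identity. The paper simply states $f_1f_2f_3=f^3$ without pausing to determine the constant; you make that step explicit, which is a harmless elaboration rather than a different route.
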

\begin{Proof}
Let $f_i = y - \lambda_i x - \nu_i$ be the equation of the tangent line at $T_i$
and $f = y - \lambda x -\nu$ the equation of the 
chord through $T_1$, $T_2$ and $T_3$. As rational functions 
on $E$ we have $f_1 f_2 f_3 = f^3$. Expanding as
power series in the local parameter $x/y$ at $\origin$ it follows that
$\lambda = \tfrac{1}{3}(\lambda_1 + \lambda_2 + \lambda_3)$ as required.
\end{Proof}

The remainder of the algorithm is the same for all Galois actions
on $E[3]$. As specified in \SSS\ref{improve_eqns},
we intersect the above space of quadrics with $K[z_1, \ldots, z_8]$
to leave an 18-dimensional space of quadrics defining the projection
of $C_\rho \subset \PP(R) = \PP^8$ to $\PP(L) = \PP^7$. In other words
we eliminate the monomials $z_0z_i$ by linear algebra.
We then make the following changes of \coordinates{}
\begin{itemize}
\item A change of \coordinates{} corresponding to pointwise 
multiplication by the ``fudge factor'' $1/y_T \in L$ (relative to the 
basis $u_1,\ldots,u_8$). This is to make up for the fact
that the definitions of $\eps$ in~(\ref{defeps}) and~(\ref{epsrho}) 
are different. We explain this further in \SSS\ref{exteqns}.
\item A change of \coordinates{} corresponding to the trivialisation
of the obstruction algebra. 
\end{itemize}
We now have 18 quadrics in variables $z_{ij}$ where $1 \le i,j \le 3$.
These \coordinates{} correspond to the standard basis for
$\Mat_3(K)$.  

\begin{Lemma}
\label{bi-subst}
Substituting $z_{ij} = x_i y_j$ gives a basis for 
the space of $(2,2)$-forms vanishing on the image of 
$C \to \PP^2 \times (\PP^2)^\vee$. 
\end{Lemma}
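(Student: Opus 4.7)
The approach is to match an $18$-dimensional target space of $(2,2)$-forms vanishing on $\hat{C}$ with the $18$ substituted quadrics. First I would compute the target dimension: the image $\hat{C} \subset \PP^2 \times (\PP^2)^\vee$ has bidegree $(3, n^2 - n) = (3, 6)$ (the degrees of the two projections being the embedding and its dual), so $\mathcal{O}(2,2)$ restricts to a line bundle of degree $2 \cdot 3 + 2 \cdot 6 = 18$ on the elliptic curve $\hat{C}$, giving $h^0 = 18$ by Riemann--Roch. With $\dim H^0(\PP^2 \times (\PP^2)^\vee, \mathcal{O}(2,2)) = 36$, the kernel of restriction is $18$-dimensional (projective normality of $\hat{C}$ in the product ensures surjectivity). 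The substituted forms all lie in this kernel, since the substitution $z_{ij} = x_i y_j$ is pullback by the Segre embedding $\sigma : \PP^2 \times (\PP^2)^\vee \to \PP(\Mat_3)$, which sends $\hat{C}$ to $\widetilde{C}$, on which the $18$ quadrics vanish.

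The main step is linear independence. Let $V_1$ denote the span of the $Q_k$. The kernel of $\sigma^*$ is the $9$-dimensional space $I_\Sigma$ spanned by the $2 \times 2$ minors of $(z_{ij})$, so it suffices to show $V_1 \cap I_\Sigma = 0$. The key observation is that every $Q \in V_1$ has trace-free gradient, i.e.\ $\sum_i \partial Q / \partial z_{ii} = 0$, equivalently $Q$ is invariant under $M \mapsto M + cI$. Indeed the $18$ quadrics were obtained by eliminating from the original $27$ quadrics in $K[z_0, z_1, \ldots, z_8]$ every monomial involving $z_0$, and after the trivialisation of the obstruction algebra the basis element $r_1 \in R$ corresponding to $z_0$ maps to the identity matrix $I \in \Mat_3$. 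By the chain rule, the condition ``no dependence on $z_0$'' translates precisely into $\sum_i \partial / \partial z_{ii} = 0$.

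Finally I would conclude with a direct check that no non-trivial linear combination of the nine $2 \times 2$ minors has trace-free gradient. Computing $\sum_n \partial m / \partial z_{nn}$ for each basis minor yields $z_{ii} + z_{jj}$ for the three diagonal minors $z_{ii}z_{jj} - z_{ij}z_{ji}$ and a single $\pm z_{pq}$ (with $p \ne q$) for the six off-diagonal minors, so the resulting linear system in the nine coefficients has only the trivial solution. This gives $V_1 \cap I_\Sigma = 0$, and the $18$ substituted forms are then linearly independent elements of an $18$-dimensional target, completing the basis check. The most delicate point is the identification of ``eliminating $z_0$'' with ``trace-free gradient'' in the second step; once that is in hand, the minor computation and the dimension count finish the proof.
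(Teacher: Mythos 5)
Your proof is correct but takes a genuinely different route from the paper's, primarily in how it establishes that the $18$ substituted forms \emph{span} the target. Both arguments share the key linear-algebra fact: the kernel of the substitution map (the ideal of the Segre variety, spanned by the $2\times 2$ minors of $(z_{ij})$) is complementary to the $36$-dimensional space of quadrics that omit the trace direction. Your ``trace-free gradient'' condition $\sum_i \partial Q/\partial z_{ii}=0$ is exactly the paper's ``singular at $I_3$'' condition, and you make the complementarity explicit by computing the gradient-trace of each minor, a check the paper merely asserts. The arguments then diverge on surjectivity. The paper avoids any dimension count on the target: since the minors lie in the ideal of $\widetilde{C}$ (as $\widetilde{C}$ sits in the rank-one locus), the quadrics through $\widetilde{C}\subset\PP(\Mat_3)$ split as (minors) $\oplus$ (your $V_1$), so the substitution map carries $V_1$ isomorphically onto its image, and one sees by pulling back along the Segre map that this image is the full space of $(2,2)$-forms vanishing on the image of $C$. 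You instead compute the target to be $18$-dimensional via Riemann--Roch on the bidegree-$(3,6)$ image and an appeal to surjectivity of restriction. That step needs a little more than the bare phrase ``projective normality in the product'': the clean route is to factor through the Segre embedding and use projective normality of $\widetilde{C}$ as an elliptic normal curve of degree $9$ in $\PP^8$ --- the same fact the paper leans on implicitly when it takes for granted the dimension of the space of quadrics through $\widetilde{C}$. In exchange, your count makes it transparent why $18$ is exactly the right number, and your minor-by-minor trace computation fills in a detail the paper leaves to the reader.
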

\begin{Proof}
We start with a basis for the 18-dimensional space of quadrics vanishing 
on $\widetilde{C} \subset \PP(\Tr=0)$. This may be identified with 
the space of quadrics vanishing on 
$\widetilde{C} \subset \PP(\Mat_3)$ that are ``singular at $I_3$''.
(A quadric is ``singular at $I_3$'' if when we write it relative to 
a basis for $\Mat_3(K)$ with first basis vector $I_3$, the first variable 
does not appear.)
Substituting $z_{ij} = x_i y_j$ gives a surjective linear map $\Phi$
from the 45-dimensional space of quadrics in $z_{11}, \ldots, z_{33}$ to the
36-dimensional space of $(2,2)$-forms in $x_1,x_2,x_3$ and $y_1,y_2,y_3$.
The kernel is spanned by the $2 \times 2$ minors of the matrix
$(z_{ij})$ and is a complement to the space of 
quadrics ``singular at $I_3$''.
Thus $\Phi$ induces an isomorphism between the space of quadrics 
vanishing on $\widetilde{C} \subset \PP(\Tr=0)$ and the space
of $(2,2)$-forms vanishing on the image of $C \to \PP^2 \times 
(\PP^2)^\vee$. 
\end{Proof}

We multiply each of the forms constructed in Lemma~\ref{bi-subst}
by the $x_i$ to obtain $54$ forms of \bidegree{} $(3,2)$. The following
lemma shows that there is a ternary cubic $f$, unique up to scalars,
such that $y_1^2 f(x_1,x_2,x_3)$ belongs to the span of these
$54$ forms. Moreover $f$ is the equation of the curve $C \subset \PP^2$
we are looking for.

\begin{Lemma}
\label{getcubic} 
Let $C \subset \PP^2$ be a non-singular plane cubic with equation
$f=0$. Let $V$ be the space of $(2,2)$-forms vanishing 
on the image of $C \to \PP^2 \times (\PP^2)^\vee$. Then 
$y_1^2 f(x_1,x_2,x_3)$ is a $(3,2)$-form in the ideal 
generated by $V$. Moreover this is the only such polynomial 
up to scalars.
\end{Lemma}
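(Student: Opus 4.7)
\emph{Uniqueness.} Suppose $y_1^2 g(x_1,x_2,x_3)$ lies in the ideal generated by $V$, for some ternary cubic $g$. Then $y_1^2 g$ vanishes on $\widetilde{C}$. Identifying $\widetilde{C} \cong C$ via the first projection, the tangent at $P\in C$ is $\nabla f(P)$, so $y_1$ restricts to $f_{x_1}$ on $\widetilde{C}$. The locus $\{P\in C:f_{x_1}(P)=0\}$ is the intersection of $C$ with the conic $f_{x_1}=0$, a finite set (of size $6$ by B\'ezout, since $f$ is irreducible and $\deg f_{x_1}<\deg f$ precludes a common component). Therefore $g$ vanishes on a Zariski-dense open subset of $C$, hence on all of~$C$. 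By irreducibility of $f$ and equality of degrees, $g=c f$ for some $c\in K$.

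\emph{Existence.} The claim is that $y_1^2 f \in K[x_1,x_2,x_3]_1\cdot V$, which is the bidegree-$(3,2)$ part of the ideal $(V)$. Since $f\in I(\widetilde{C})$, certainly $y_1^2 f\in I(\widetilde{C})_{(3,2)}$, so it suffices to prove the stronger statement $K[x]_1\cdot V = I(\widetilde{C})_{(3,2)}$. I compute both dimensions. On the elliptic curve $\widetilde{C}\cong C$, the pullback of $\mathcal{O}_{\PP^2}(1)$ from the first factor is $P_x=\mathcal{O}_C(1)$ of degree $3$, while the Gauss map $P\mapsto\nabla f(P)$ is given by the partial derivatives (of degree $2$), so the pullback from the second factor is $P_y=\mathcal{O}_C(2)$ of degree $6$. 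Hence $P_x^{3}\otimes P_y^{2}\cong\mathcal{O}_C(7)$ has $h^0=21$ by Riemann--Roch. The multiplication maps on sections of line bundles on an elliptic curve are surjective in this range, so the restriction $K[x,y]_{(3,2)}\to H^0(\widetilde{C},\mathcal{O}_C(7))$ is surjective, giving $\dim I(\widetilde{C})_{(3,2)} = 60-21 = 39$.

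The heart of the proof is then to show $\dim(K[x]_1\cdot V) = 39$, i.e., the kernel of the multiplication map $K[x]_1\otimes V \to K[x,y]_{(3,2)}$ has dimension exactly $54-39=15$. This kernel consists of Koszul-type relations $\sum x_i\otimes Q_i$ with $\sum x_i Q_i=0$ and each $Q_i\in V$. To control it, I would invoke Lemma~\ref{bi-subst}: the $18$-dimensional space $V$ lifts to $18$ of the $27$ quadrics generating the homogeneous ideal of $\widetilde{C}$ in the Segre embedding $\PP^8=\PP(\Mat_3)$ (the remaining nine being the $2\times 2$ minors of $(z_{ij})$). Since $\widetilde{C}\subset\PP^8$ is a genus one normal curve of degree $9$, its ideal is generated by these $27$ quadrics and its syzygies are known; tracking these through $\Phi$ yields the Koszul-type syzygies above and pins down the kernel dimension. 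This syzygy bookkeeping is the main obstacle; once completed, we conclude $K[x]_1\cdot V=I(\widetilde{C})_{(3,2)}\ni y_1^2 f$, as required.
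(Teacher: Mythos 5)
Your uniqueness argument is sound and matches the paper's in substance: the key point is that $y_1$ does not vanish identically on $\widetilde{C}$, since the dual curve spans $(\PP^2)^\vee$ (equivalently, $\partial f/\partial x_1$ cannot share a component with the irreducible cubic $f$). For existence, however, there is a genuine gap. You correctly reduce the claim to showing that $K[x_1,x_2,x_3]_1\cdot V$ fills up all of $I(\widetilde{C})_{(3,2)}$, i.e.\ that the kernel of the multiplication map $K[x]_1\otimes V\to K[x,y]_{(3,2)}$ has dimension exactly $15$ — and then you defer this "syzygy bookkeeping" without carrying it out. But that computation \emph{is} the content of the existence claim, so as written the proposal does not prove it. (The surjectivity of the restriction map onto $H^0(C,\mathcal{O}_C(7))$ giving $\dim I(\widetilde{C})_{(3,2)}=39$ is also asserted rather than checked, though it is the less serious of the two assumptions.)

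The paper dispatches existence with a single explicit identity. From Euler's relation $3f=\sum_i x_i\,\partial f/\partial x_i$ one gets
\[
3\,y_1^2 f \;=\; x_2\,\bigl(y_1 g_{12}\bigr)\;+\;x_3\,\bigl(y_1 g_{13}\bigr)\;+\;\frac{\partial f}{\partial x_1}\,\bigl(y_1 \ell\bigr),
\]
where $g_{ij}=y_i\,\partial f/\partial x_j-y_j\,\partial f/\partial x_i$ and $\ell=\sum_i x_i y_i$ vanish on $\widetilde{C}$. The forms $y_1 g_{12}$ and $y_1 g_{13}$ have bidegree $(2,2)$, hence lie in $V$; for the third term, expanding $\partial f/\partial x_1=\sum c_{jk}x_j x_k$ gives $\sum c_{jk}\,x_j\,(x_k y_1\ell)$ with each $x_k y_1\ell\in V$. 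This exhibits $3y_1^2 f$ explicitly as an element of $K[x]_1\cdot V$, which is both a complete proof and exactly what the subsequent linear-algebra step in the algorithm requires — in contrast to the dimension-count route, which even if completed would only give abstract existence.
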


\begin{Proof}
By Euler's identity $3f = \sum x_i 
\frac{\partial f}{\partial x_i}$ we have
\begin{equation}
\label{tc-id}
3 y^2_1 f  = x_2 y_1 g_{12} + x_3 y_1 g_{13} 
+  \tfrac{\partial f}{\partial x_1} y_1 \ell. 
\end{equation}
where $\ell = \sum_{i=1}^3 x_i y_i$ and 
$g_{ij} = y_i \tfrac{\partial f}{\partial x_j} - 
y_j \tfrac{\partial f}{\partial x_i}$ are \bihomogeneous\ forms
vanishing on the image of $C \to \PP^2 \times (\PP^2)^\vee$.
Exactly as in the proof of Lemma~\ref{lem:elim},
the uniqueness statement follows from the fact that
the dual curve spans $(\PP^{2})^\vee$.
\end{Proof}

Lemma~\ref{getcubic} allows us to compute the ternary cubic
$f$ by linear algebra.  If we had made the wrong choice of sign for
the Weil pairing, then the matrices $M_i$ in~(\ref{trivmats}) would be
the transposes of the desired ones; switching the roles of the $x_i$
and $y_j$ corrects for this.

Our implementation in \Magma\ 
over $K= \Q$ finishes by minimising and reducing
the ternary cubic as described in \cite{minred234}. The covering map, 
computed using the classical formulae in \cite{AKM3P}, is also returned.


\section{A good basis for the obstruction algebra}
\label{compobs}

The obstruction algebra $A_\rho = (R,+,*_{\eps \rho})$ was defined
in \SSS\ref{improve_eqns}.
In the case $K = \Q$ we explain how to
choose a $\Q$-basis for $R$ so that the structure constants for
$A_\rho$ are small integers. This is useful for the later parts of our
algorithm, for example when trivialising the obstruction algebra as
described in the next section.

We recall that $R$ is a product of number fields. Its ring of integers
$\OO_R$ is the product of the rings of integers of these fields.
A fractional ideal in $R$ is just a tuple of fractional
ideals, one for each field, and a prime ideal is a tuple where 
one component is a prime ideal, and all other components 
are unit ideals.

Let $\alpha \in R^\times$ represent $w_1(\xi)$ for some 
$\xi \in H^1(\Q,E[n])$.
We write $(\alpha) = \bb \cc^n$ where $\bb$ is integral and $n$th 
power free. We then choose as our $\Q$-basis for $R$ a $\Z$-basis for
$\cc^{-1}$ that is LLL-reduced with respect to the inner product 
\begin{equation}
\label{iprod}
\langle z_1, z_2 \rangle = \sum_{T \in E[n]} |\alpha(T)|^{2/n} z_1(T)
\overline{z_2(T)}. 
\end{equation}
where the bar denotes complex conjugation.
In the remainder of this section we explain why this is a good choice.
Notice that in defining the inner product we have implicitly fixed 
an embedding $\Qbar \subset \C$.

We restrict to the case $n=2m-1$ is odd and take for $\eps$ the
square root of the Weil pairing, \ie $\eps(S,T) = e_n(S,T)^m$.
(This is also the choice we made in \SSS\ref{threedesc} for $n = 3$.
See \SSS\ref{exteqns} for a discussion of possible choices for~$\eps$
and their relation.)
By definition of $w_1$ (see \cite[\SSS3]{paperI}) there exists
$\gamma \in \Rbar^\times$ with $\gamma^n = \alpha$ and 
$w(\xi_\sigma) = \sigma(\gamma)/\gamma$ for all $\sigma \in \GQ$.
Then $w_2(\xi) = \rho \partial R^\times$
where $\rho = \partial \gamma \in (R \otimes R)^\times$.

\begin{Lemma} 
\label{lem:order}
The structure constants for $A_\rho$ with respect 
to a $\Z$-basis for $\cc^{-1}$ are integers, 
\ie $(\cc^{-1},+,*_{\eps \rho}) \subset A_\rho$ is an order.
\end{Lemma}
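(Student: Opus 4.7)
The plan is to prove the stronger statement that $\cc^{-1}$ is closed under the multiplication $*_{\eps\rho}$; this immediately yields integer structure constants in any $\Z$-basis. The key ingredient is the identity $\rho = \partial\gamma$ for some $\gamma \in \Rbar^\times$ with $\gamma^n = \alpha$, which rewrites the product as
\[
(z_1 *_{\eps\rho} z_2)(T) = \gamma(T)^{-1} \sum_{T_1+T_2=T} \eps(T_1,T_2)\,(\gamma z_1)(T_1)\,(\gamma z_2)(T_2),
\]
with the value lying in the component $K(T)$ of $R$. So the strategy is to control valuations of $\gamma z_i$ and $\gamma(T)^{-1}$, use that $\eps$ is a root of unity, and then absorb the resulting fractional error by an integrality argument.

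Concretely, I would fix a rational prime $p$, an extension $v$ of the $p$-adic valuation to $\Qbar$ normalised so that $v(p)=1$, and let $e$ be the ramification index over $p$ of the induced valuation of $K(T)$. Since $(\alpha)=\bb\cc^n$, one has $v(\gamma(T)) = \tfrac{1}{n}v(\bb(T)) + v(\cc(T))$, and the assumption that $\bb$ is $n$th power free forces $v(\bb(T)) \in \{0, 1/e, \ldots, (n-1)/e\}$, so $\tfrac{1}{n}v(\bb(T)) < 1/e$. For $z_1,z_2 \in \cc^{-1}$ and any $T' \in E[n]$ it follows that
\[
v\big((\gamma z_i)(T')\big) \geq v(\gamma(T')) - v(\cc(T')) = \tfrac{1}{n}v(\bb(T')) \geq 0,
\]
so $\gamma z_1$ and $\gamma z_2$ are pointwise $v$-integral. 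The factor $\eps(T_1,T_2)$ is a power of the Weil pairing, hence a root of unity and a $v$-unit, so every summand above is $v$-integral, yielding
\[
v\big((z_1 *_{\eps\rho} z_2)(T)\big) + v(\cc(T)) \geq -\tfrac{1}{n}v(\bb(T)) > -1/e.
\]

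The final rounding step now gives the result: the left-hand side lies in $(1/e)\Z$, so being strictly greater than $-1/e$ forces it to be $\geq 0$, which is exactly $v\big((z_1 *_{\eps\rho} z_2)(T)\big) \geq -v(\cc(T))$. Varying $p$, $v$, and one representative $T$ in each Galois orbit on $E[n]$ then shows $z_1 *_{\eps\rho} z_2 \in \cc^{-1}$. The main obstacle I expect is not the computation itself but the careful bookkeeping of valuations on $R$ versus on $\Rbar$; conceptually, the two essential ingredients are that $\bb$ is $n$th power free (so the fractional loss from $\gamma$ is smaller than the finest step in the value group of the relevant completion) and that $\eps$ takes values in $\mu_n$ (so it does not itself destroy integrality of the individual summands).
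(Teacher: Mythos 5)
Your proof is correct and follows the same strategy as the paper's: write $\rho = \partial\gamma$ with $\gamma^n=\alpha$, pull a factor $\gamma^{-1}$ out of the trace sum so that the summands become integral, and then round the resulting fractional lower bound on the valuation up to the nearest legitimate value. The only cosmetic difference is that you normalise a valuation $v$ on $\Qbar$ with $v(p)=1$ (hence track the ramification index $e$), whereas the paper works directly with the $\Z$-valued valuation $\ord_\pp$ at a prime $\pp$ of $R$, which makes the rounding step (from $\ge -(qn+r)/n$ to $\ge -q$) slightly cleaner.
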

\begin{Proof}
Let $\pp$ be a prime of $R$.
Put $r = \ord_\pp(\bb)$ and $q = \ord_\pp(\cc)$ so that
$\ord_\pp(\alpha) = qn+r$ with $0 \le r<n$. Let 
$z_1,z_2 \in \cc^{-1}$. Then $\ord_\pp(z_i) \ge -q$ for $i=1,2$.
Extending $\ord_\pp$ to $\Rbar^\times$ and recalling that
$\gamma^n = \alpha$, we have $\ord_\pp(\gamma z_i) \ge 0$. Then
\[ \begin{array}{rcl}
z_1 *_{\eps \rho} z_2 & = & \Tr(\eps \rho \cdot (z_1 \otimes z_2)) \\
 & = & \gamma^{-1} \Tr(\eps \cdot (\gamma z_1 \otimes \gamma z_2)). 
\end{array} \]
Since $\eps \in R \otimes_K R$ is integral and the trace map 
$\Tr : R \otimes_K R \to R$ preserves integrality we deduce
$\ord_\pp(z_1 *_{\eps \rho} z_2) \ge -(qn+r)/n$. Since this valuation
is an integer we must therefore have $\ord_\pp(z_1 *_{\eps \rho} z_2) \ge -q$.
Repeating for all primes $\pp$ of $R$ 
it follows that $z_1 *_{\eps \rho} z_2 \in \cc^{-1}$ as required.
\end{Proof}

Let $\tau \in \GQ$ be complex conjugation. (Recall that we fixed an embedding 
$\Qbar \subset \C$.) Since $n$ is odd we 
have $H^1(\R,E[n]) = 0$ and so $\tau(\gamma)/\gamma = w(\xi_\tau) 
= w(\tau(S)-S)$ for some $S \in E[n]$. Therefore dividing $\gamma$ by 
$w(S)$ we may assume that $\gamma : E[n] \to \Qbar$
is $\GR$-equivariant.
It follows by \cite[Lemma 4.6]{paperI} that pointwise multiplication 
by $\gamma$ defines an 
isomorphism $A_{\rho} \otimes \R \isom A_{1} \otimes \R$.

Let $T_1$, $T_2$ be a basis for $E[n](\C)$ with $\overline{T}_1 = T_1$,
$\overline{T}_2 = -T_2$ and $e_n(T_1,T_2) = \zeta_n$. We define
$$ h(T_1) = \begin{pmatrix}
    0   &  1 & 0  &  \cdots &       0     \\ 
    0   &  0 & 1  &  \cdots &       0     \\
 \vdots & \vdots      & \vdots & &  \vdots   \\
    0   &  0  & 0 & \cdots    &      1    \\ 
    1   &  0  & 0 & \cdots    &      0       
\end{pmatrix}, \quad 
h(T_2) = \begin{pmatrix}
    1   &    0   &    0    & \cdots &      0     \\
    0   & \zeta_n  &    0    & \cdots &      0     \\
    0   &    0   & \zeta_n^2 & \cdots &      0     \\ 
 \vdots & \vdots & \vdots  &        &   \vdots   \\
    0   &   0    &    0    & \cdots & \zeta_n^{n-1}  
\end{pmatrix}, $$
and
\[ \begin{array}{crcl}
h : & E[n](\C) & \to & \Mat_n(\C) \\
& r T_1 + s T_2 & \mapsto & \zeta_n^{-rs/2} h(T_1)^r h(T_2)^s 
\end{array} \]
where the exponent of $\zeta_n$ is an element of $\Z/n\Z$.
It may be verified that \[h(S) h(T) = \eps(S,T) h(S+T)\] for all
$S,T \in E[n]$. Hence there is an isomorphism 
$A_1 \otimes \C \isom \Mat_n(\C)$ given by $z \mapsto \sum_T z(T) h(T)$. 
Since this isomorphism respects complex conjugation 
it restricts to an isomorphism $A_1 \otimes \R \isom \Mat_n(\R)$.

Composing the isomorphisms defined in the previous two paragraphs gives a 
trivialisation of $A_\rho$ over $\R$, \ie
\begin{equation}
\label{realtriv}
  A_\rho \otimes \R  \isom  \Mat_n(\R) \, ; \quad 
z \mapsto \sum_{T \in E[n]} \gamma(T) z(T) h(T).    
\end{equation} 
We use this trivialisation first to compute the discriminant of the 
order in Lemma~\ref{lem:order} and then to explain why we chose the
inner product~(\ref{iprod}). 
The \emph{discriminant} $\Disc (R)$ of $R$ is the product 
of the discriminants of the constituent fields. The \emph{norm} of 
$\bb \subset \OO_R$ is $\Norm \bb = \# (\OO_R/\bb)$.

\begin{Lemma} 
\label{compdisc}
The order $\O = (\cc^{-1},+,*_{\eps \rho}) \subset A_\rho$ 
has discriminant \[n^{n^2} (\Norm \bb)^{2/n} \Disc(R).\] 
\end{Lemma}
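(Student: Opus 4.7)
The plan is to compute $|\text{disc}(\O)|$ as the absolute value of $\det(\Trd(v_i *_{\eps\rho} v_j))$ for a $\Z$-basis $\{v_i\}$ of $\cc^{-1}$, and to evaluate this determinant by first passing to the natural basis $\{e_T\}_{T \in E[n]}$ of $\Rbar$, where $e_T$ is the characteristic function of $T$. Writing $M$ for the matrix with $M_{iT} = v_i(T)$, the product $M M^t$ is the Gram matrix of the standard trace form $\Tr_{R/\Q}(v_i v_j) = \sum_T v_i(T) v_j(T)$ on $\cc^{-1}$, so by the standard formula for the discriminant of a fractional ideal one has $|\det M|^2 = (\Norm \cc)^{-2} |\Disc(R)|$.

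For the reduced trace on $A_\rho$, I would observe that left multiplication by $e_S$ sends $e_T$ to $\eps(S,T)\rho(S,T) e_{S+T}$; when $S \neq \origin$ this is a scaled permutation of the basis with no fixed points, while for $S = \origin$ it is scalar multiplication by $c := \gamma(\origin)$. Since the identity of $A_\rho$ is $c^{-1} e_\origin$ and must have reduced trace $n$, this forces $\Trd(z) = n c\, z(\origin)$, a formula one could also extract from the real trivialisation~\eqref{realtriv}. Using $e_T *_{\eps\rho} e_{T'} = \eps(T,T') \rho(T,T') e_{T+T'}$ together with $\eps(T,-T) = 1$ (which holds since $\eps$ is the square root of the Weil pairing and $n$ is odd) and $\rho(T,-T) = \gamma(T)\gamma(-T)/c$ for $T \neq \origin$ (with $\rho(\origin,\origin) = c$), the Gram matrix $B_{TT'} := \Trd(e_T *_{\eps\rho} e_{T'})$ becomes block-anti-diagonal under the involution $T \leftrightarrow -T$ on $E[n]$.

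Since $n$ is odd, $-T \neq T$ whenever $T \neq \origin$, so $B$ decomposes into the single entry $B_{\origin,\origin} = nc^2$ together with $(n^2-1)/2$ blocks $\bigl(\begin{smallmatrix} 0 & n\gamma(T)\gamma(-T) \\ n\gamma(-T)\gamma(T) & 0 \end{smallmatrix}\bigr)$, each of determinant $-n^2 \gamma(T)^2 \gamma(-T)^2$. On multiplying them out, the factor $c^2$ from the origin combines with $\prod_{T \neq \origin} \gamma(T)^2$ from the off-origin blocks to form $\bigl(\prod_T \gamma(T)\bigr)^2$, yielding
\[ |\det B| = n \cdot n^{n^2 - 1} \cdot \Bigl|\prod_T \gamma(T)\Bigr|^2 = n^{n^2} \Bigl|\prod_T \alpha(T)\Bigr|^{2/n}, \]
where the last step uses $\gamma(T)^n = \alpha(T)$.

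Combining these ingredients gives $|\det(\Trd(v_i *_{\eps\rho} v_j))| = |\det M|^2 |\det B|$, and applying $(\alpha) = \bb\cc^n$ to obtain $|\prod_T \alpha(T)| = (\Norm \bb)(\Norm \cc)^n$, the $\Norm \cc$ factors cancel to leave $n^{n^2} (\Norm \bb)^{2/n} \Disc(R)$. I expect the most delicate step to be tracking the precise power of $n$: this depends on using the reduced trace (rather than the regular trace $\Tr_{A/\Q} = n\Trd$) in the definition of the discriminant, and on verifying that the $n$-th root of unity $c = \gamma(\origin)$ does indeed combine correctly between the single origin entry of $B$ and the off-origin $2 \times 2$ blocks.
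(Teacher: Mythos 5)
Your proposal is correct, and the final bookkeeping ($|\det M|^2 = (\Norm\cc)^{-2}\Disc(R)$, $\prod_T \alpha(T) = N_{R/\Q}(\alpha)$, $(\alpha) = \bb\cc^n$) is exactly the paper's. Where you diverge is in how the Gram matrix $\bigl(\Trd(e_T *_{\eps\rho} e_{T'})\bigr)$ is evaluated. The paper pushes everything through the real trivialisation $z \mapsto \sum_T \gamma(T) z(T) h(T)$ and uses the explicit identity $\Tr(h(S)h(T)) = n$ if $S+T=\origin$ and $0$ otherwise, then notes that $[-1]$ is an even permutation of $E[n]$ to nail down the sign of the determinant. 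You instead derive $\Trd(z) = n\gamma(\origin)z(\origin)$ intrinsically from the multiplication law $e_S *_{\eps\rho} e_T = \eps(S,T)\rho(S,T)e_{S+T}$: for $S\neq\origin$ left multiplication by $e_S$ is a scaled permutation without fixed points (so $\Trd(e_S) = \tfrac1n\Tr_{A\otimes\Kbar/\Kbar}(e_S) = 0$), while $e_\origin = \gamma(\origin)\cdot 1$; the pairing $T\leftrightarrow -T$ then gives the block-antidiagonal form of $B$ directly. This is a little cleaner — it avoids the $h$-matrices and the even-permutation remark, since you work with $|\det B|$ throughout rather than chasing the exact sign — but the two arguments are reorderings of the same computation, and you correctly identify the two places where care is needed (the factor of $n$ from reduced vs.\ regular trace, and the cancellation of $c=\gamma(\origin)$ between the origin entry and the off-origin blocks). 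One cosmetic point: the right-hand side in the lemma should really be read as $|\Disc(R)|$, since $\Disc(\O)$ is defined with an absolute value; this ambiguity is inherited from the paper, not introduced by you.
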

\begin{Proof}
Let $r_1, \ldots , r_{n^2}$ be a $\Z$-basis for $\cc^{-1}$ mapping to
matrices $M_1, \ldots , M_{n^2}$ (say) under the 
trivialisation~(\ref{realtriv}). Then the discriminant of $\O$ is 
$\Disc(\O) = \det (\Trd(r_i r_j))= \det (\Tr(M_i M_j))$. But
$$ \Tr(h(S)h(T)) = \left\{ \begin{array}{ll} n & \text{ if } S+T = \origin \\
0 & \text{ otherwise.} \end{array} \right. $$
Noting that $[-1]$ is an even permutation of $E[n]$ we compute
$$ \begin{array}{rcl}
\Disc(\O) & = & \det\bigl(n \sum_{T \in E[n]} \gamma(T) r_i(T) 
\gamma(-T) r_j(-T)\bigr)_{i,j} \\
& = & n^{n^2} \bigl(\prod_{T \in E[n]} \gamma(T)^2 \bigr)
 \bigl(\det(r_i(T))_{i,T}\bigr)^2.
\end{array} $$
By considering the basis for $\Rbar = \Map(E[n],\Kbar)$ consisting 
of indicator functions it is clear that for $z \in R$ we have
$\Tr_{R/\Q}(z) = \sum_T z(T)$ and $N_{R/\Q}(z) = \prod_T z(T)$.
Since $\gamma$ is $\GR$-equivariant we also have 
$\prod_{T} \gamma(T) \in \R$. Hence $\prod_{T} \gamma(T)^2 
= | N_{R/\Q}(\alpha) |^{2/n}$ and
\[ \bigl(\det(r_i(T))_{i,T}\bigr)^2 
= \Disc(r_1, \ldots, r_{n^2}) = (\Norm \cc)^{-2} 
\Disc(R).\] Recalling that $(\alpha) = \bb \cc^n$ the result is now clear.
\end{Proof}

\begin{Remark}
If we start with a Selmer group element
then the discriminant computed in Lemma~\ref{compdisc} 
is a product of primes dividing $n$ and primes of bad 
reduction for $E$.
Indeed if $\pp$ is a prime of $R$ not dividing any of these primes
then $\ord_\pp(\alpha) \equiv 0 \pmod{n}$. The term $\Disc(R)$ 
is of the stated form by (the easier implication of) 
the criterion of N\'eron-Ogg-Shafarevich.
\end{Remark}

Next we give some justification for our choice of inner product.
(See also \SSS\ref{latred} and the examples in \SSS\ref{sec:examples}.)
\begin{Lemma}
\label{ip}
The real trivialisation~(\ref{realtriv}) identifies the inner 
product~(\ref{iprod}) with (a scalar multiple of) the standard 
Euclidean inner product $\langle~,~\rangle$ on $\Mat_n(\R) \isom \R^{n^2}$. 
\end{Lemma}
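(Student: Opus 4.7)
The plan is to extend the standard Euclidean form on $\Mat_n(\R)$ to the Hermitian form $\langle M, N \rangle_{\operatorname{HS}} = \Tr(M N^*)$ on $\Mat_n(\C)$, and then to verify by a direct calculation that, under the complexified trivialisation $z \mapsto M_z = \sum_{T} \gamma(T)\, z(T)\, h(T)$, this Hermitian form pulls back to $n$ times the form in~(\ref{iprod}).

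First I would establish the relevant properties of the $h(T)$. The matrices $h(T_1)$ and $h(T_2)$ are visibly unitary (permutation matrix and unitary diagonal), hence each $h(T)$ is unitary, so $h(T)^* = h(T)^{-1}$. From $h(T)h(-T) = \eps(T,-T)\,h(\origin) = \eps(T,-T)\,I_n$ together with $e_n(T,-T) = e_n(T,T)^{-1} = 1$ (the Weil pairing is alternating), we get $\eps(T,-T) = 1$ and hence $h(T)^* = h(-T)$. Combined with $\Tr(h(U)) = n$ if $U = \origin$ and $0$ otherwise, and the multiplicative rule $h(S)h(-T) = \eps(S,-T)\,h(S-T)$, this yields the orthogonality relation
\[
  \Tr\bigl(h(S)\,h(T)^*\bigr) = n\,\delta_{S,T}.
\]

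Second, I would plug the trivialisation into the Hilbert--Schmidt form:
\begin{align*}
  \Tr\bigl(M_{z_1} M_{z_2}^*\bigr)
    &= \sum_{S,T \in E[n]} \gamma(S)\,z_1(S)\,\overline{\gamma(T)}\,\overline{z_2(T)}\,\Tr\bigl(h(S)h(T)^*\bigr)\\
    &= n \sum_{T \in E[n]} |\gamma(T)|^2\, z_1(T)\, \overline{z_2(T)}.
\end{align*}
Since $\gamma^n = \alpha$, we have $|\gamma(T)|^n = |\alpha(T)|$, so $|\gamma(T)|^2 = |\alpha(T)|^{2/n}$, and the right-hand side is exactly $n\,\langle z_1, z_2\rangle$ with $\langle\cdot,\cdot\rangle$ as defined in~(\ref{iprod}).

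Finally, I would argue that it suffices to identify the two Hermitian extensions: since $\gamma$ is $\GR$-equivariant and the $h(T)$ were chosen so that the trivialisation is real (i.e.\ sends $R \otimes \R$ to $\Mat_n(\R)$), restricting to real $z_1, z_2 \in R \otimes \R$ turns $\langle M_{z_1}, M_{z_2}\rangle_{\operatorname{HS}}$ into the standard Euclidean inner product on $\Mat_n(\R) \isom \R^{n^2}$, and turns $\langle z_1, z_2 \rangle$ into a real bilinear form; the identity above then gives the required equality (up to the scalar $n$). The only real check is the orthogonality of the $h(T)$ under the Hilbert--Schmidt trace pairing; everything else is bookkeeping.
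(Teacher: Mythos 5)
Your proposal is correct and follows essentially the same route as the paper: extend the Euclidean form to the Hilbert--Schmidt Hermitian form on $\Mat_n(\C)$, establish $\langle h(S),h(T)\rangle = n\,\delta_{S,T}$, and substitute the trivialisation. The paper simply asserts the orthogonality of the $h(T)$ without spelling out the unitarity and $h(T)^* = h(-T)$ argument, but your verification of it is the one the authors evidently have in mind.
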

\begin{Proof}
Extending $\langle~,~\rangle$ to an inner product on 
$\Mat_n(\C)$ we have
\[ \langle h(S),h(T) \rangle = \left\{ \begin{array}{ll}
n & \text{ if } S=T \\
0 & \text{ otherwise. }
\end{array} \right. \]
Therefore if $z_1, z_2 \in R$ map to $M_1,M_2 \in \Mat_n(\R)$ then 
\[ \begin{array}{rcl}
\langle M_1 , M_2 \rangle & = & n \sum_{T \in E[n]} | \gamma(T) |^2 
z_1(T) \overline{z_2(T)} \\
& = & n \sum_{T \in E[n]} | \alpha(T) |^{2/n} 
z_1(T) \overline{z_2(T)}. 
\end{array} \]
\end{Proof}

In principle we could now bound the size of the structure
constants. But in practice the structure constants are much smaller
than these bounds would suggest. (We encounter a similar situation at
the end of \SSS\ref{blackbox}.)


\section{Inside the ``Black Box'': How to trivialise a matrix algebra}
\label{blackbox}

Our work on $n$-descent on elliptic curves requires us to make
the Hasse principle explicit.  In the case $n=2$ this means we
have to solve a conic in order to represent a $2$-Selmer group element as a
double cover of~$\PP^1$ rather than as an intersection of quadrics
in~$\PP^3$. In this section we discuss the general case, and in
particular give an algorithm that is practical when $K=\Q$ and $n=3$.
See~\cite{IvanyosRonyaiSchicho} for a complexity analysis of our
method and its natural generalisation to arbitrary $K$ and $n$.

\subsection{Central simple algebras}
We recall some standard theory. See for example \cite[Part II]{WeilBNT}.
Let $K$ be a field. A central simple algebra $A$ over $K$ is a
finite-dimensional algebra over $K$ with centre $K$ and no two-sided
ideals (except $0$ and $A$).  Wedderburn's Theorem states that $A$ is
then isomorphic to a matrix algebra over a division algebra (\ie skew
field) $D$ with centre $K$.  The Brauer group $\Br(K)$ of $K$ is the
set of equivalence classes of central simple algebras over $K$, where
$A$ and $A'$ are equivalent if they are matrix algebras over the same
division algebra $D$.  The group law is given by tensor product,
\ie $[A] \cdot [A'] = [A \otimes_K A']$, and the inverse of $[A]$ is
the class of the opposite algebra $A^{\operatorname{op}}$ obtained by
reversing the order of multiplication.  The identity element is the
class of matrix algebras over $K$.

If $A$ is a central simple algebra over $K$ and 
$L/K$ is any field extension then $A_L = A \otimes_K L$ is a
central simple algebra over $L$. The reduced trace and norm are
defined as $\Trd(a) = \tr(\varphi(a))$ and $\Nrd(a) = \det(\varphi(a))$
where $\varphi : A_{\Kbar} \isom \Mat_n(\Kbar)$ is an isomorphism
of $\Kbar$-algebras. These definitions are independent of the choice 
of $\varphi$ by the Noether-Skolem theorem. 
We likewise define the rank of $a \in A$ to be 
the rank of $\varphi(a)$.

Now let $K$ be a number field. For each place $v$ of $K$ there
is a natural map $\Br(K) \to \Br(K_v)$ given by 
$[A] \mapsto [A_{K_v}]$. We recall \cite[\SSS32]{Reiner}
that $A_{K_v}$ is a matrix algebra over $K_v$ for all $v$ outside
a finite set of places depending on $A$.
It is then one of the main results of class field theory 
that the map
\begin{equation}
\label{brmap}
 \Br(K) \to \bigoplus_{v \in M_K} \Br(K_v) 
\end{equation}
is injective. Explicitly, this says that a central simple algebra $A$
over $K$ can be \emph{trivialised} (\ie is isomorphic to a
matrix algebra over $K$) if and only it can be trivialised everywhere
locally.  In particular deciding whether a
central simple algebra over $K$ can be trivialised is essentially a local
problem, given some global information restricting the places
to consider to a finite set. This latter usually involves
factorisation.

\subsection{Statement of the problem}
The problem we address is rather different. Given a $K$-algebra
$A$ known to be isomorphic to $\Mat_n(K)$,
we would like to find such an isomorphism explicitly. More specifically,
we want a practical algorithm that takes as input a list of structure
constants $c_{ijk} \in K$, giving the multiplication on $A$
relative to a $K$-basis $\av_1, \ldots, \av_{n^2}$ by the rule
\[  \av_i \av_j  = \sum_{k} c_{ijk} \av_k, \]
and returns as output a basis
$M_1, \ldots ,M_{n^2}$ for $\Mat_n(K)$ satisfying
\begin{equation}
\label{strM}
  M_i M_j  = \sum_{k} c_{ijk} M_k. 
\end{equation}
The output is far from unique, as we are free to conjugate
the $M_i$ by any fixed matrix in $\GL_n(K)$.

\subsection{Zero-divisors}
\label{zerodiv}
Let $A$ be a central simple algebra of dimension $n^2$ 
over a field $K$. If $n$ is prime then by 
Wedderburn's theorem either $A \isom \Mat_n(K)$ or 
$A$ is a division algebra. In particular $A \isom \Mat_n(K)$ 
if and only if it contains a zero-divisor. 

Once we have found a zero-divisor it is easy to find a trivialisation
$A \isom \Mat_n(K)$. More generally (\ie dropping our assumption that
$n$ is prime) it is enough to find $x \in A$ of rank $r$ with
$(r,n)=1$. Indeed as $A$-modules we have $Ax \isom M^r$ and $A \isom
M^n$ where $M$ is the unique faithful simple module.  By taking
kernels (or cokernels) of sufficiently general $A$-linear maps we can
apply Euclid's algorithm to the dimensions and so explicitly compute
$M$.  Since the natural map $A \to \End_K(M) \isom \Mat_n(K)$ is an
isomorphism, this gives the required trivialisation of $A$.

\subsection{Maximal orders}
\label{maxorder}
Let $A$ be a central simple algebra of dimension $n^2$ over $\Q$.
An \emph{order} in $A$ is a subring $\OO \subset A$ whose additive group
is a free $\Z$-module of rank $n^2$. Thus a 
$\Q$-basis $a_1=1,a_2, \ldots, a_{n^2}$ for $A$ is a $\Z$-basis for 
an order $\OO$ if and only if the structure constants are 
integers. We can reduce to this case by clearing denominators.
The discriminant of $\OO$ is defined as
\[ \Disc(\OO) = | \det ( \Trd(a_i a_j) ) |. \]
A \emph{maximal order} $\OO \subset A$ is an order that is not 
a proper subring of any other order in $A$.
It is shown in \cite[\SSS25]{Reiner} that all maximal
orders in $A$ have the same discriminant, which we denote $\Disc(A)$. 
Moreover if $A_{\Q_p} \isom \Mat_{\kappa_p}(D_p)$ where $D_p$ is a division
algebra over $\Q_p$ with $[D_p:\Q_p]=m_p^2$ then 
\begin{equation}
\label{discA}
   \Disc(A) = (\prod_p p^{(m_p-1) \kappa_p})^n.  
\end{equation}
By the injectivity of~(\ref{brmap}) it follows that
$A \isom \Mat_n(\Q)$ if and only if $\Disc(A)=1$
and $A_\R \isom \Mat_n(\R)$. (In fact we can dispense with the 
real condition, in view of the description of the image 
of~(\ref{brmap}) also given by class field theory.)

It is well known that every maximal order in $\Mat_n(\Q)$ 
is conjugate to $\Mat_n(\Z)$. 
By computing a maximal order our original problem 
(in the case $K= \Q$) is reduced to the following:
given structure constants for a ring known to be 
isomorphic to $\Mat_n(\Z)$, find such an 
isomorphism explicitly.

\subsection{Lattice reduction}
\label{latred}
Let $L \subset \R^m$ be a lattice spanned by the rows of an $m$ by $m$
matrix $B$. Then $\det L = |\det B|$ depends only on $L$ and not on $B$.
By the geometry of numbers, $L$ contains a non-zero vector $x$ with
\[ ||x||^2 \le c (\det L)^{2/m} \] 
where $c$ is a constant depending only on $m$. 
(Here, $||x|| = (\sum_{i=1}^m x_i^2)^{1/2}$ is the usual Euclidean norm.)
The best possible value of $c$ is called Hermite's constant and 
denoted $\gamma_m$. 
Blichfeldt \cite{Blichfeldt} has shown that 
\begin{equation}
\label{blichfeldt}
 \gamma_m^m \le \left(\frac{2}{\pi}\right)^m 
  \Gamma \left( 1 + \frac{m+2}{2}\right)^2. 
\end{equation}

Let $A$ be a central simple algebra over $\Q$ of dimension $n^2$.
For $n \in \{3,5\}$ the following argument gives a direct proof 
that if $A_{\Q_p} \isom \Mat_n(\Q_p)$ for all primes $p$ then
$A \isom \Mat_n(\Q)$. (This should be viewed as generalising the
geometry of numbers proof of the Hasse principle for conics over $\Q$.)
First let $\OO$ be a maximal order in $A$.
Since $n$ is odd we may trivialise $A$ over the reals,
and hence identify $\OO$ as a subring of $\Mat_n(\R)$.
We identify $\Mat_n(\R) = \R^{n^2}$ in the obvious way.
Then  \[  \Disc(\OO) = (\det B)^2 \Disc(\Mat_n(\Z)) \]
where $B$ is an $n^2$ by $n^2$ matrix whose rows are a basis for $\OO$.
Our local assumptions show by~(\ref{discA}) that $\Disc(\OO) = 1$.
Since $\Disc(\Mat_n(\Z)) = 1$ it follows that $\det \OO = |\det B| = 1$.
Hence by the geometry of numbers there is a non-zero matrix
$M \in \OO \subset \Mat_n(\R)$ 
with $||M||^2 \le \gamma_{n^2}$. Blichfeldt's bound~(\ref{blichfeldt}) gives 
\begin{align*}
\gamma_{9} & \le \frac{2}{\pi} \left(\frac{12!}{2^{12} 6!} 
\sqrt{\pi}\right)^{2/9} \approx 2.24065, \\ 
\gamma_{25} & \le \frac{2}{\pi} \left(\frac{28!}{2^{28} 14!} 
\sqrt{\pi}\right)^{2/25} \approx 4.29494. 
\end{align*}
Hence $||M||^2 < n$.  Applying the Gram Schmidt algorithm to the
columns of $M$, we can write $M = QR$ where $Q$ is orthogonal and $R$
is upper triangular, say with diagonal entries $r_1, \ldots, r_n$.
Then by the AM-GM inequality
\[ | \det M |^{2/n} = (\prod_{i=1}^n r_i^2 )^{1/n} \le \tfrac{1}{n}
\sum_{i=1}^n r_i^2 \le \tfrac{1}{n} ||R||^2 = \tfrac{1}{n} ||M||^2 < 1. \]
But $\det M$ is the reduced norm of an element of $\OO$, and
therefore an integer. Hence $\det M = 0$,
\ie $M$ is a zero-divisor. As we have seen, this implies
that $A \isom \Mat_n(\Q)$ (recall that $n$ is prime).

This proof suggests the following algorithm. 
Starting with a $\Q$-algebra $A$, known to be isomorphic to $\Mat_n(\Q)$,
we perform the following steps.
\begin{itemize}
\item Compute a maximal order $\OO \subset A$. 
(See for example  \cite{IvanyosRonyai}, \cite{Ronyai1990}, \cite{Fr},
or the {\Magma} implementation
by de Graaf.) 
\item Trivialise $A$ over the reals. In practice (for $n$ odd) 
we split the algebra by a number field of odd degree, and then take 
a real embedding.
\item Use the real trivialisation to embed $\OO$ as a lattice in 
$\Mat_n(\R) \isom \R^{n^2}$. Then compute an LLL-reduced basis for $\OO$.
\item Search through small linear combinations of the basis elements
of $\OO$ until we find an element with reducible minimal polynomial.
If $n$ is prime we can then compute a trivialisation as 
described in \SSS\ref{zerodiv}.
\end{itemize}

In practice for $n \in \{3,5\}$ the first basis vector of $\OO$ is a
zero-divisor, and so no searching is required in the final stage.
(The bounds in the LLL-algorithm are unfortunately not quite strong
enough to prove this. In the case $n=3$ we were able to rectify this
by proving an analogue of Hunter's theorem \cite{H}, 
\cite[Theorem 6.4.2]{CohenGTM138}. We omit the details.)
For general $n$ the algorithm still finds
a basis for $A$ with respect to which the structure constants
are small integers, and is therefore worth applying before
attempting any other method (for example using norm equations).

We give some theoretical justification for the last remark.
Suppose $\OO \subset \Mat_n(\R)$ has basis $M_1, \ldots, M_{n^2}$.
As observed above, the $n^2$ by $n^2$ matrix $B$ whose $i$th row
contains the entries of $M_i$ has determinant $1$. So by Cramer's
rule the structure constants (defined by~(\ref{strM})) satisfy
\begin{equation}
\label{Cramer}
 | c_{ijk}| \le || M_i M_j || \prod_{s \not= k} || M_s ||. 
\end{equation}
The LLL algorithm bounds
$\prod_{i=1}^{n^2} || M_i ||$ by a constant depending only on $n$.
So either $|| M_i || < \sqrt{n}$ for some $i$, in which case
we have found a zero-divisor, or the $||M_i ||$ are bounded by a
constant depending only on $n$. In this latter case, by~(\ref{Cramer})
and the fact $|| M_i M_j || \le || M_i || \cdot ||M_j||$,
the structure constants are also bounded by a constant 
depending only on $n$. 
These constants turn out to be rather large -- but fortunately 
the method works much better in practice.


\section{Projecting to the rank $1$ locus}
\label{exteqns}

In this section we explain the ``fudge factor'' $1/y_T$ used in our
description (see \SSS\ref{threedesc}) of the Segre embedding method
in the case $n=3$. 

Let $E/K$ be an elliptic curve. 
We write $\tau_P : E \to E$ for translation
by $P \in E$. The \emph{theta group} of level $n$ for $E$ is
\[ \Theta_E = \{ (f,T) \in \Kbar(E)^\times \times E[n] : \divv(f) 
 = \tau_T^*(n (\origin)) - n (\origin) \} \]
with group law
\[ (f_1,T_1) * (f_2,T_2) = (\tau_{T_2}^*(f_1) f_2, T_1 + T_2). \]
It sits in an exact sequence 
\[ 0 \ra \Gm \stackrel{\alpha}{\ra} \Theta_E \stackrel{\beta}{\ra} E[n] 
\ra 0 \]
where the structure maps $\alpha$ and $\beta$ are given by $\alpha:
\lambda \mapsto (\lambda,\origin)$ and $\beta: (f,T) \mapsto T$. The commutator
is given by the Weil pairing, \ie $x y x^{-1} y^{-1} = \alpha
e_n( \beta x, \beta y)$ for all $x,y \in \Theta_E$. 

The construction of the obstruction algebra depends on an element 
$\eps \in (R \otimes_K R)^\times$. In \cite{paperI} it is shown that
we can take 
\[ \eps(T_1,T_2) = \frac{\phi(T_1)\phi(T_2)}{\phi(T_1+T_2)} \]
where $\phi : E[n] \to \Theta_E$ is any Galois equivariant set-theoretic
section for $\beta$.
This element has the property that
\begin{equation}
\label{eps-wp}
 \eps(T_1,T_2) \eps(T_2,T_1)^{-1} = e_n(T_1,T_2). 
\end{equation}
If we change $\phi$ by multiplying by an element $z \in R^\times$
(viewed as a map $E[n] \to \Kbar^\times$)
then $\eps$ is multiplied by $\partial z$. It is shown in 
\cite[Lemma 4.6]{paperI} that this does not change the obstruction 
algebra (up to isomorphism).

One choice of $\phi$ is to take 
\[ \phi(T) = (F_T ,-T)^{-1} = (\tau_T^*(1/F_T),T) \]
where the $F_T$ are the functions with divisor $n (T) - n (\origin)$ 
scaled as specified at the start of \SSS\ref{improve_eqns}.
Then
\begin{align*}
\phi(T_1) \phi(T_2) &= (\tau_{T_1}^* (1/F_{T_1}),T_1) * 
(\tau_{T_2}^*(1/F_{T_2}) , T_2) \\
&= (\tau_{T_1+T_2}^*(1/F_{T_1}) 
\tau_{T_2}^*(1/F_{T_2}),T_1+T_2) \\
& = \frac{\tau_{T_1+T_2}^*(F_{T_1+T_2})}{\tau_{T_1+T_2}^*(F_{T_1}) 
\tau_{T_2}^*(F_{T_2})} \phi(T_1+T_2) 
\end{align*}
This gives the formula~(\ref{defeps})
cited in \SSS\ref{improve_eqns}. We recall from
\cite[\S3]{paperI} that when $n = 2m-1$ is odd an alternative 
choice of $\eps$ (suggested by~(\ref{eps-wp})) is
\begin{equation}
\label{defeps-via-M}
 \eps(T_1,T_2) = e_n(T_1,T_2)^m. 
\end{equation}
This choice of $\eps$ corresponds to choosing $\phi$ so that 
$\iota(\phi(T)) = \phi(T)^{-1}$ and $\phi(T)^n = 1$ for all 
$T \in E[n]$, where $\iota: \Theta_E 
\to \Theta_E$ is the involution $(f,T) \mapsto (f \circ [-1],-T)$.
Indeed applying the involution $\iota$ to 
\[ \phi(T_1) \phi(T_2) = \eps(T_1,T_2) \phi(T_1+T_2) \]
gives
\[ \phi(T_1)^{-1} \phi(T_2)^{-1} = \eps(T_1,T_2) \phi(T_1+T_2)^{-1} \]
and so 
\[  e_n(T_1,T_2) = \phi(T_1) \phi(T_2) \phi(T_1)^{-1} \phi(T_2)^{-1} 
= \eps(T_1,T_2)^2 \]
as required.

The formulae~(\ref{defeps}) and~(\ref{defeps-via-M}) differ 
by $\partial u$ where $u \in R^\times$ satisfies
\begin{align*}
 \iota( u(T) F_T,-T) &= (u(T) F_T,-T)^{-1} \\
(u(T) F_T, - T)^n &= 1 
\end{align*}
equivalently
\begin{align}
\label{ueqn1} 
u(T)^2 F_T(P) F_T(T-P) & = 1 \\
\label{ueqn2}
u(T)^n \textstyle\prod_{i=0}^{n-1} F_T(P+iT) & =1
\end{align} 
where $P \in E$ is arbitrary, subject to avoiding the zeros and poles
of these functions.
Taking $P = mT$ in~(\ref{ueqn1}) gives
\[ u(T) = \pm 1/F_T(mT). \]
We check that the sign is independent of $T \in E[n] \setminus \{\origin\}$.
If Galois acts transitively on $E[n] \setminus \{\origin\}$ then this is 
already clear.
In general we use~(\ref{ueqn2}) and the following lemma.
\begin{Lemma} \label{signs} 
Let $n \ge 3$ be an odd integer
and $\origin \not = T \in E[n]$ a point of order $r$. 
Then the rational function
\[ g_i: P \mapsto F_T(P + i T) F_T(P + (1-i)T) \]
satisfies 
\[ g_i(\origin) = \left\{ \begin{array}{ll} u(T)^{-2} & \text{ if }
i \not\equiv 0,1 \pmod{r} \\ -u(T)^{-2} & \text{ if }
i \equiv 0,1 \pmod{r} \end{array} \right. \]
\end{Lemma}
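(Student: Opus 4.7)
\begin{Proof}[Proof sketch]
The plan is to treat the two cases uniformly via the fundamental identity
\begin{equation}\label{fundid}
  F_T(P)\, F_T(T-P) = u(T)^{-2},
\end{equation}
which follows from~(\ref{ueqn1}): indeed, the divisor of the rational function $P \mapsto F_T(P)F_T(T-P)$ is
\[ n(T) + n(\origin) - n(\origin) - n(T) = 0, \]
so this function is constant, and the constant must be $u(T)^{-2}$ by the defining relation of $u(T)$ (evaluated at any point where both factors are finite and nonzero).

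First I would handle the case $i \not\equiv 0,1 \pmod r$. Then $iT$ and $(1-i)T = T - iT$ are both distinct from $\origin$ and from $T$, so $F_T$ takes finite nonzero values at these points, and we may simply evaluate: $g_i(\origin) = F_T(iT)\,F_T((1-i)T)$. Applying~(\ref{fundid}) with $P = iT$ gives $g_i(\origin) = u(T)^{-2}$, as required.

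Next I would handle the case $i \equiv 0,1 \pmod r$. Since swapping the two factors of $g_i$ interchanges the roles of $i$ and $1-i$, it suffices to treat $i \equiv 0 \pmod r$, where $g_i(P) = F_T(P)\,F_T(P+T)$. Here $g_i(\origin)$ is of indeterminate form $\infty \cdot 0$, so I rewrite it using~(\ref{fundid}) with $P$ replaced by $-P$:
\[ F_T(-P)\, F_T(T+P) = u(T)^{-2}, \qquad \text{hence} \qquad
   g_i(P) = u(T)^{-2}\,\frac{F_T(P)}{F_T(-P)}. \]
Now I would pass to the local parameter $t = x/y$ at~$\origin$. Since $x\circ[-1] = x$ and $y\circ[-1] = -y$, the involution $[-1]$ sends $t$ to $-t$. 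The chosen scaling makes $F_T = t^{-n}(1 + O(t))$ as a power series in~$t$, so
\[ F_T(-P) = (-t)^{-n}(1+O(t)) = -t^{-n}(1+O(t)) \]
because $n$ is odd. Taking the limit $P \to \origin$ gives $F_T(P)/F_T(-P) \to -1$, hence $g_i(\origin) = -u(T)^{-2}$.

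The only non-routine step is the second case, and the key inputs there are the oddness of~$n$ together with the fact that the uniformizer $t = x/y$ is antisymmetric under $[-1]$; everything else is a formal manipulation of~(\ref{fundid}).
\end{Proof}
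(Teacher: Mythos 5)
Your argument is essentially the paper's: both derive from~(\ref{ueqn1}) that $F_T(P)\,F_T(T-P)\equiv u(T)^{-2}$, evaluate directly at $P=\origin$ when $i\not\equiv 0,1\pmod r$, and obtain the extra minus sign in the remaining case from the parity of the pole order of $F_T$ at $\origin$ via the expansion in $t=x/y$ (the paper phrases this as $F_T(-P)/F_T(P)\to -1$). Your explicit reduction of $i\equiv 1$ to $i\equiv 0$ by swapping the two factors is a small clarification that the paper leaves implicit, but the method is the same.
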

\begin{Proof}
By~(\ref{ueqn1}) the rational function 
\[  h_i: P \mapsto F_T(-P + i T) F_T(P + (1-i)T) \]
is constant with value $u(T)^{-2}$. If $i \not\equiv 0,1 \pmod{r}$
then $g_i$ and $h_i$ take the same value
at $P = \origin$. If $i \equiv 0,1 \pmod{r}$ then an extra minus sign
arises since $F_T$ has a pole of odd order at $\origin$.  Indeed expanding
$F_T$ as a power series in $t = x/y$ about $\origin$ it is clear that the 
rational function $P \mapsto F_T(-P) /F_T(P)$ takes value 
$-1$ at $P = \origin$.
\end{Proof}

To compute the product in~(\ref{ueqn2}) we put $P=\origin$ in 
\[  \prod_{i=0}^{n-1} F_T(P+iT) = F_T(P + mT) 
\prod_{i=1}^{m-1} g_i(P)  \]
and use Lemma~\ref{signs}. Since $n/r$ is odd 
we find that $u(T) = -1/F_T(mT)$ for all $T \in E[n] \setminus \{\origin\}$.

In the case $n=3$ we recall that relative to the Weierstra{\ss} equation 
$y^2 = x^3 + ax +b$ we have 
\[ F_T(x,y) = (y-y_T) - \lambda_T(x-x_T) \]
where $\lambda_T$ is the slope of the tangent line at $T = (x_T,y_T)$.
Therefore $u(T) = - 1/ F_T(-T) = 2/y_T$. 

In \SSS\ref{improve_eqns} we took $\eps$ given
by~(\ref{defeps}).  In Sections~\ref{threedesc} 
and~\ref{compobs} we took $\eps$
given by~(\ref{defeps-via-M}).  This difference does not matter when
computing the obstruction algebra, but it does matter when we
subsequently compute equations using the Segre embedding method.  For
instance, if we used the wrong $\eps$ then it would not be true (after
projection to the trace zero subspace) that we get a curve in the rank
1 locus of $\PP(\Mat_n)$.  In view of \cite[Lemma 4.6]{paperI} the
situation is remedied by multiplying by the factor $u(T)$. (Here we
use the usual pointwise multiplication in $R$.)  In
\SSS\ref{threedesc} we used the factor $1/y_T$.  The constant $2$
(or indeed any scalar in $K^\times$) can be ignored since the scalar
matrices act trivially on projective space.


\section{Examples}
\label{sec:examples}

We refer to~\cite{6and12} for examples using our work on $3$-descent 
to find points of large height on elliptic curves over $\Q$.
Here we instead use $3$-descent
to construct explicit non-trivial elements of the Tate-Shafarevich group,
and to give examples to show that the kernel of the 
obstruction map is not a group. 
See the \Magma\ file stored with the arXiv version of this paper~\cite{paperIIIarXiv}
for further details of these examples.

\subsection{An element of \Sha[3]}
\label{example1}
Let $E/\Q$ be the elliptic curve
\[ y^2 + x y = x^3 + x^2 - 1154 x - 15345 \]
labelled 681b1 in \cite{CremonaTables}. This curve has generic
3-torsion in the sense that the map
$\rho_{E,3} : \GQ \to \GL_2(\Z/3\Z)$ 
is surjective. We work with the Weierstra{\ss} equation 
$y^2 = x^3 + a_4 x + a_6$ where $a_4=- 1496259$ and $a_6 =- 693495810$.
Relative to this Weierstra{\ss} equation a 3-torsion point is
given by $T=(x_T,y_T)$ where
\[ x_T = 12 u^6 - 36 u^2 + 2115, \quad
y_T = -2820 u^7 - 144 u^5 + 16920 u^3 - 662268 u, \]
and $u$ is a root of $f(X) = X^8 - 6 X^4 + 235 X^2 - 3$.
The slope of the tangent line at $T$ is
$\lambda_T = (3 x_T^2 + a_4)/(2 y_T) = -3 u^7 + 15 u^3 - 705 u.$

Using the algorithm in 
\cite{SchaeferStoll} (see \SSS\ref{sec:w1} for a summary)
we find that $\Sel^{(3)}(E/\Q) \isom (\Z/3\Z)^2$. One of the non-trivial
elements is represented by
\[ a = \tfrac{1}{18}(u^6 - u^4 - 9 u^3 - 5 u^2 - 27 u - 3). \]
In this example we find the corresponding plane cubic.

Let $L = \Q(u)$ and $M = L(v)$ where $v$ is a root of
\[ g(X) = \frac{f(X)}{X^2-u^2} = X^6 + u^2 X^4 + (u^4 - 6) X^2 + u^6 - 6 u^2 + 235. \]
We also put $L^+ = \Q(u^2)$ and $M^+ = L(v^2)$. Let $\sigma$
and $\tau$ be the automorphisms generating $\Gal(L/L^+)$ and
$\Gal(M/M^+)$. The polynomial $f(X)$ splits over $M$ with roots
$\pm u$, $\pm v$, $\pm u_{10}$ and $\pm u_{01}$. 
There are embeddings $\iota_{10}: L \to M$ and 
$\iota_{01} : L \to M$ given by 
$u \mapsto u_{10}$ and $u \mapsto u_{01}$. 
We choose $u_{10}$ and $u_{01}$ so that $\tau(u_{10}) = u_{01}$ 
and $\iota_{10}(T) + \iota_{01}(T) = T$.

Following the description in \SSS\ref{threedesc} we put
\[ R  = \Q \times L \quad \text{ and } \quad  
 R \otimes R = \Q \times L \times L \times L \times L \times M. \]
Then $\alpha = (1,a)$ and 
$\rho = (1,1,1,\sigma(a)/s,s,t/s)$ where $s= -u^2$ and
\begin{align*}
t = \tfrac{1}{486} & (u^7 - u^5 - 5 u^3 - 27 u^2 + 240 u - 27) v^4 \\
   & + \tfrac{1}{486} (-2 u^7 + 2 u^5 - 27 u^4 + 10 u^3 - 237 u + 27) v^2 \\
   & + \tfrac{1}{162} (-u^7 - 9 u^6 + u^5 + 86 u^3 + 54 u^2 - 240 u + 45).
\end{align*}
We put $\eps = (1,1,1,1,1,\zeta_3)$ where $\zeta_3 \in M$ is a
primitive cube root of unity. It is not worth recording our choice of 
$\zeta_3$ since a different choice only has the effect of 
reversing the order of multiplication in the obstruction algebra.

The basis for $L$ as a $\Q$-vector space suggested in \SSS\ref{compobs} is
\begin{align*}
u_1 & = 1, \\
u_2 &= \tfrac{1}{3} (-u^7 + 6 u^3 - 235 u), \\
u_3 & = \tfrac{1}{18} 
  (80 u^7 - 9 u^6 + u^5 - 481 u^3 + 54 u^2 + 18795 u - 2124), \\
& \hspace{4cm} \vdots \\
u_7 & = \tfrac{1}{54} (97 u^7 - 9 u^6 + 2 u^5 - 584 u^3
                   + 63 u^2 + 22785 u - 2133), \\
u_8 & = \tfrac{1}{54} (462 u^7 - 53 u^6 + 6 u^5 - u^4 
         - 2769 u^3 + 319 u^2 + 108549 u - 12423).
\end{align*}
Then $R$ has basis $r_1, \ldots, r_9$ where $r_1 = (1,0)$ and 
$r_{i+1} = (0,u_i)$. Let $A$ be the obstruction 
algebra $(R,+,*_{\eps \rho})$ with basis $\av_1, \ldots, \av_9$ 
corresponding to $r_1, \ldots ,r_9$. Then $\av_1$ is 
the identity, and left multiplication by $\av_2$ is given by
\begin{align*}
\av_2^2  & = -3 \av_3 - 3 \av_5 - 3 \av_6, \\ 
\av_2 \av_3 & = 2 \av_2 + 3 \av_3 + 3 \av_8, \\ 
& \hspace{2cm} \vdots \\
\av_2 \av_8 & = 7 \av_2 - 3 \av_3 + 9 \av_4 - 3 \av_8, \\ 
\av_2 \av_9 & = -3 \av_1 + 4 \av_2 + 3 \av_3 - 3 \av_5 
+ 6 \av_6 - 3 \av_7 + 3 \av_8. 
\end{align*}
We do not record the full table of structure constants, but note
that the above sample is typical in that most entries are single 
digit integers.
(Alternatively the full table may be recovered from the trivialisation 
given below.) The basis vectors $\av_i$ have minimal polynomials
\small
\begin{align*}
    X - 1, & \,\,
    X^3 + 162, \,\,
    X^3 - 12 X - 227, \,\,
    X^2, \,\,
    X^3 - 12 X - 470, \,\,
    X^3 - 12 X - 470, \\
    & X^3 - 147 X - 367, \,\,
    X^3 - 201 X + 1307, \,\,
    X^3 + 123 X + 254. 
\end{align*}
\normalsize
Notice that $\av_4$ is a zero-divisor, so in this example it is 
particularly easy to find a trivialisation.

The discriminant of $L$ is $3^{11} \cdot 227^4$ and the ideal
generated by $a$ is a cube. As predicted by 
Lemma~\ref{compdisc} the order with basis the $\av_i$ has discriminant 
$ |\det (\Trd (\av_i \av_j)) | = 3^{20} \cdot 227^{4}.$
A basis for a maximal order in $A$ is given by
\small
\begin{align*}
\bv_1 & = \tfrac{1}{3}(\av_1 + 56 \av_6 + 126 \av_7 + 101 \av_8 + 2438 \av_9), \\ 
\bv_2 & = \tfrac{1}{2043}(3 \av_2 + 38 \av_4 + 471 \av_5 + 95432 \av_6 + 50049 \av_7 + 75876 \av_8 + 1408079 \av_9),
\\ 
\bv_3 & = \tfrac{1}{2043}(\av_3 + 167 \av_4 + 543 \av_5 + 175106 \av_6 + 57658 \av_7 + 87258 \av_8 + 1872296 \av_9),
\\ 
\bv_4 & = \tfrac{1}{9}(\av_4 + 529 \av_6 + 2041 \av_9), \\ 
\bv_5 & = \tfrac{1}{3}(\av_5 + 159 \av_6 + 105 \av_7 + 160 \av_8 + 2802 \av_9), \\ 
\bv_6 & = \tfrac{1}{3}(\av_6 + \av_9), \\
\bv_7 & = \tfrac{1}{3}(\av_7 + 8 \av_9), \\
\bv_8 & = \tfrac{1}{3}(\av_8 + 8 \av_9), \\
\bv_9 & = \av_9
\end{align*}
\normalsize
with minimal polynomials
\begin{align*}
   & X^3 - X^2 + 67882988 X + 153570178243, \,\,
    X^3 + 46000395 X + 93752525874, \\
   & X^3 + 80434914 X + 198363227932, \,\,
    X^3 + 4444433 X + 1099577331, \\
   & X^3 + 84844655 X + 243745052250, \,\,
    X^3 - 3 X, \\
   & X^3 + 725 X + 3507, \,\,
    X^3 + 671 X + 9393, \,\,
    X^3 + 123 X + 254.
\end{align*}
In defining the $\bv_i$ we have not made use of the fact 
the $\av_i$ are already LLL-reduced with respect to a 
real trivialisation. The simplest way to correct for this is 
to run LLL on the rows of the change of basis matrix. So instead of 
the $\bv_i$ we consider the basis
\begin{align*}
\bv'_1 & = \tfrac{1}{2043}(-12 \av_2 - 63 \av_3 - 4 \av_4 - 73 \av_6 + 18 \av_7 + 8 \av_9), \\ 
\bv'_2 & = \tfrac{1}{2043}(-81 \av_2 - 28 \av_3 - 27 \av_4 + 18 \av_6 + 8 \av_7 + 54 \av_9), \\ 
& \hspace{4cm} \vdots \\
\bv'_8 & = \tfrac{1}{2043}(-36 \av_2 + 37 \av_3 + 48 \av_4 + 138 \av_5 - 81 \av_6 - 173 \av_7 - 90 \av_8 + 24 \av_9), \\
\bv'_9 & = \tfrac{1}{2043}(-681 \av_1 - 27 \av_2 - 85 \av_3 - 9 \av_4 + 6 \av_6 - 73 \av_7 + 18 \av_9) 
\end{align*}
with minimal polynomials
  \[ X^2, \,\,  X^2, \,\,  X^2, \,\,  X^3 - X, \,\,  
     X^3 - X, \,\,  X^3, \,\,  X^3 - X, \,\,  X^3 - X, \,\,  X^2 + X. \]
Now every vector in our basis is a zero-divisor! (Recall that to 
find a trivialisation we only needed to find one zero-divisor.) 
Using the method in \SSS\ref{zerodiv} we find a trivialisation:
\small
\begin{align*}
\av_1 & \mapsto 
\begin{pmatrix}
1 & 0 & 0 \\
0 & 1 & 0 \\
0 & 0 & 1 
\end{pmatrix} &
\av_2 & \mapsto 
\begin{pmatrix}
6 & -6 & 3 \\
6 & -6 & 0 \\
0 & -9 & 0 
\end{pmatrix} &
\av_3 & \mapsto 
\begin{pmatrix}
-4 & -3 & -3 \\
0 & 2 & -3 \\
9 & -9 & 2 
\end{pmatrix} \\
\av_4 & \mapsto 
\begin{pmatrix}
0 & 0 & 0 \\
0 & 0 & 0 \\
12 & 15 & 0 
\end{pmatrix} &
\av_5 & \mapsto 
\begin{pmatrix}
2 & 3 & 3 \\
0 & -4 & 3 \\
18 & -18 & 2 
\end{pmatrix} &
\av_6 & \mapsto 
\begin{pmatrix}
2 & 9 & -6 \\
0 & 2 & -6 \\
-9 & 9 & -4 
\end{pmatrix} \\
\av_7 & \mapsto 
\begin{pmatrix}
-5 & 3 & 3 \\
0 & -11 & 3 \\
-9 & -9 & 16 
\end{pmatrix} &
\av_8 & \mapsto 
\begin{pmatrix}
1 & -12 & 3 \\
-12 & -8 & 3 \\
-9 & 9 & 7 
\end{pmatrix} &
\av_9 & \mapsto 
\begin{pmatrix}
7 & -9 & -3 \\
9 & -11 & 6 \\
15 & -15 & 4 
\end{pmatrix}. 
\end{align*}
\normalsize

We recall that $R = \Q \times L$ and $L$ has basis $u_1, \ldots, u_8$.
The space of quadrics vanishing on the projection of 
$C_\rho \subset \PP(R)$ to $\PP(L)$ has basis
\begin{align*}
 q_{1} &= z_4 z_6 - z_5 z_6 - z_5 z_7 + z_6 z_8 + z_7 z_8, \\
 q_{2} &= 2 z_1 z_6 - z_3 z_6 + z_4 z_5 + z_5^2 - z_5 z_8 - 2 z_6 z_7 + z_6 z_8, \\
 q_{3} &= -z_1 z_6 - z_3 z_6 - 2 z_4 z_5 + z_4 z_8 + z_5^2 - z_5 z_8 + z_6 z_8, \\
& \hspace{5cm} \vdots \\
 q_{18} &= z_1^2 - z_1 z_3 - z_1 z_4 + z_1 z_5 + 2 z_1 z_6 - z_1 z_7 - z_2 z_4
           + 2 z_2 z_5 - 2 z_2 z_6 - 2 z_3^2 \\
        & \quad{}
          + 2 z_3 z_4 - 2 z_3 z_5 - z_3 z_6 - z_3 z_7 + 3 z_3 z_8 + 2 z_4^2
          - z_4 z_5 + z_4 z_6 - 4 z_4 z_7 \\
        & \quad{}
          - z_4 z_8 + z_5 z_6 + 3 z_5 z_7 + z_5 z_8 + z_6^2 + 2 z_6 z_7
          + z_6 z_8 - z_7^2 - z_8^2.
\end{align*}

Multiplication by the factor $1/y_T \in L$ (relative to the basis
$u_1, \ldots, u_8$) followed by the above trivialisation,
prompts us to substitute
\small
\[ \begin{pmatrix} z_1 \\ z_2 \\ z_3 \\ z_4 \\ z_5 \\ z_6 \\ z_7 
  \\ z_8 \end{pmatrix} = 
\begin{pmatrix}
-70 & -54 & 144 & 12 & 86 & 423 & 6 & -48 & -16 \\
360 & 615 & 1128 & -444 & -510 & 663 & -372 & 207 & 150 \\
160 & -180 & -621 & 222 & -257 & 648 & 174 & 291 & 97 \\
-75 & -48 & 195 & -195 & 21 & -285 & -69 & -81 & 54 \\
3 & -24 & -267 & 24 & -192 & -21 & 87 & 81 & 189 \\
-69 & -81 & -108 & -9 & 57 & -135 & 117 & 36 & 12 \\
-105 & -81 & -270 & 18 & 129 & 27 & 9 & -72 & -24 \\
252 & -72 & -801 & 72 & -333 & 423 & 261 & 243 & 81 
\end{pmatrix}
\begin{pmatrix} z_{11} \\ z_{12} \\ z_{13} \\ z_{21} \\ z_{22} \\ 
z_{23} \\ z_{31} \\ z_{32} \\ z_{33} \end{pmatrix}. \]
\normalsize
Next we substitute $z_{ij} = x_i y_j$ to give 18 forms of \bidegree{} $(2,2)$.
Multiplying each of these by the $x_i$ gives 54 forms of \bidegree{} $(3,2)$.
We then solve by linear algebra for the ternary cubic $F_1$ 
(unique up to scalars) such that $y_1^2 F_1(x_1,x_2,x_3)$ belongs to 
the span of these forms:
\[ F_1(x,y,z) = 3 x^3 - 13 x^2 y + 4 x^2 z + 2 x y^2 + x y z 
                           - y^3 - 5 y^2 z - y z^2 + z^3. \]
This is the ternary cubic corresponding to $a$. 
Since $E(\Q)/3E(\Q) = 0$ it represents a non-trivial element 
of $\Sha(E/\Q)[3]$. In general we 
now minimise and reduce using the algorithms in \cite{minred234}.
However in this example we 
find that $F_1$ is already minimised and close to being reduced. 

Repeating for different $a$ we find that the other non-trivial 
elements of $\Sha(E/\Q)[3]$ are represented by
\small
\begin{align*}
F_2(x,y,z) & = x^3 + 6 x^2 y + 4 x^2 z + 4 x y^2 + 5 x y z + 2 x z^2 + y^3 - 3 y^2 z + 7 y z^2 
    + 6 z^3, \\
F_3(x,y,z) & = x^3 - 2 x^2 y - x^2 z - 7 x y z + 8 x z^2 + 4 y^3 - 5 y^2 z + 6 y z^2 + z^3, \\
F_4(x,y,z) & = x^3 - 2 x^2 z + 4 x y^2 + 3 x y z - 5 x z^2 - y^3 + 6 y^2 z + 2 y z^2 + 7 z^3.
\end{align*}
\normalsize
We recall that inverses in the $3$-Selmer group are represented by 
the same cubic with different covering maps. Thus our 3-descent
programs return a list of $(3^s - 1)/2$ ternary cubics where $s$ is 
the dimension of the Selmer group as an $\F_3$-vector space.

We have computed equations for all elements of $\Sha(E/\Q)[3]$ for all
elliptic curves $E/\Q$ of conductor $N_E < 230000$. The results can be
found on the website \cite{Sha3List}. In compiling this list we only
ran our programs on the elliptic curves with analytic order of $\Sha$
divisible by $3$, and did not compute the class groups
rigorously. Thus the completeness of our list remains conditional on
the Birch--Swinnerton-Dyer conjecture.\footnote{However, for curves of
  rank~$0$ and~$1$ and conductor ${}<5000$ the verification of the full
  BSD conjecture has recently been completed
  by B.~Creutz and R.L.~Miller~\cite{CreutzMiller}.}
It is however unconditional that every
cubic in our list is a counterexample to the Hasse Principle.

\subsection{Adding ternary cubics}
We give two examples to show that the kernel of the 
obstruction map for $3$-coverings is not a group. 
These generalise the example for $2$-coverings given 
in \cite[\SSS5]{Cremona2Desc}.

Let $E/\Q$ be the elliptic curve 
\[ y^2 + x y + y = x^3 - x^2 + 40 x + 155 \]
labelled 126a3 in \cite{CremonaTables}.
The ternary cubics
\begin{align*}
F_1(x,y,z) &=   x^3 + x y^2 - x y z + x z^2 + y^3 + 3 y^2 z - 6 y z^2 + z^3 \\
F_2(x,y,z) &=   2 x^2 y + 2 x^2 z + 3 x y^2 - x y z + 2 x z^2 + 2 y z^2 + 2 z^3
\end{align*}
represent $3$-coverings of $E$ with $\Delta(F_1) = \Delta(F_2) 
= \Delta_E = -2^6 \cdot 3^6 \cdot 7^3$. Whereas the second of these
has the obvious rational point $(1:0:0)$, the first is not
locally soluble at the primes $2$ and $7$.

Let $K = \Q(\zeta_3)$ where $\zeta_3$ is a primitive cube root of unity.
Then $E(K)[3] \isom (\Z/3\Z)^2$ generated by
$P = (1 , 13)$ and $Q = (-6, 13 + 21 \zeta_3)$.
By the formulae in \cite{testeqtc} these points 
act on the first cubic via \[ M^{(1)}_P =
\begin{pmatrix}
1 & 0 & 0 \\
0 & -1 & 1 \\
0 & -1 & 0 
\end{pmatrix} \quad
M^{(1)}_Q =
\begin{pmatrix}
0 & 1-\zeta_3 & 1+ 2 \zeta_3  \\
1 & 1 & -1 -\zeta_3  \\
1 + \zeta_3 & -\zeta_3 & -1 
\end{pmatrix} 
\]
and on the second cubic via
\[
M^{(2)}_P =
\begin{pmatrix}
0 & -3 & -1 \\
2 & 2 & -2 \\
-2 & 0 & -2 
\end{pmatrix} \quad
M^{(2)}_Q =
\begin{pmatrix}
1 & 4 + 2 \zeta_3 & -4 \zeta_3 \\
4 + 2 \zeta_3  & 1-\zeta_3  & 2+ 4 \zeta_3  \\
-2 \zeta_3 & 1 + 2 \zeta_3  & -2 + \zeta_3  
\end{pmatrix}.
\]
Taking determinants shows that the 3-coverings are represented
as elements of 
$H^1(K,E[3]) \isom K^\times/(K^\times)^3 \times K^\times/(K^\times)^3$ by
\[\alpha_1 = (1,\zeta_3^2) \quad \text{ and } \quad 
\alpha_2 = \left(28,\frac{1-2 \zeta_3^2}{1-2 \zeta_3}\right). \]

We recall from \cite{PeriodIndex} that in this split torsion case, 
the obstruction map 
\[\Ob_3 : H^1(K,E[3])  \to \Br(K)[3] \subset \mathop{\text{\Large$\oplus$}}\limits_\pp \Br(K_\pp)[3] \]
is given by the local $3$-Hilbert norm residue symbols, 
subject to identifying
$\Br(K_\pp)[3] \isom \frac{1}{3}\Z/\Z \isom \mu_3$.
It is routine to check (see for example 
the exercises in \cite{CasselsFrohlich})
that $(28,(1-2 \zeta_3^2)/(1-2 \zeta_3))_\pp =1$
for all primes $\pp$ of $K$, but
\[  (28,\zeta_3)_\pp = \left\{ \begin{array}{ll} 
     \zeta_3 & \text{ if $\pp \mid 2$ or $\pp \mid 7$} \\
    1 & \text{ otherwise. } \end{array} \right. \]
Thus $\Ob_3(\alpha_1) = \Ob_3(\alpha_2) = 0$ yet 
$\Ob_3(\alpha_1 \alpha_2)\not=0$. This shows that the sum of our 
two $3$-coverings cannot be represented as a ternary cubic over $\Q$
(or even $K$). In particular the kernel of the obstruction map is not 
a group. 

We give a second example to show that this behaviour is not peculiar
to the split torsion case. Let $E/\Q$ be the elliptic curve
\[ y^2 + x y + y = x^3 - 43 x - 490 \]
labelled 1722f1 in \cite{CremonaTables}. The Galois action on the
$3$-torsion of $E$ is generic.
A non-trivial $3$-torsion point is 
\begin{align*}
  T = \big(&\tfrac{1}{192} (u^6 + 9 u^4 + 315 u^2 + 1979), \\
           &\tfrac{1}{44928} (-643 u^7 - 117 u^6 - 1755 u^5 - 1053 u^4 - 166257 u^3 \\
           &\hphantom{\tfrac{1}{44928} (}{}- 36855 u^2 - 888689 u - 254007) \big)
\end{align*}
defined over $L= \Q(u)$ where 
$u$ is a root of $X^8 + 234 X^4 + 1256 X^2 - 4563$.

The ternary cubics 
\begin{align*}
F_1(x,y,z) & = x^3 - 2 x^2 z + 2 x y^2 + x y z + 3 x z^2 + y^3 + 3 y^2 z - y z^2 + 2 z^3 \\
F_2(x,y,z) & = 3 x^2 y + x^2 z - x y^2 + 3 x y z - 2 x z^2 + y^3 + 6 y z^2 + z^3 
\end{align*}
represent $3$-coverings of $E$ with 
\[ \Delta(F_1) = \Delta(F_2) = \Delta_E  = -2^8 \cdot 3^3 \cdot 7^3 \cdot 41. \]
Whereas the second of these has the obvious rational 
point $(1:0:0)$, the first is not locally soluble at the primes $3$ and $7$.
The corresponding elements of $L^\times/(L^\times)^3$, computed 
using the formula in \cite{testeqtc}, are
\begin{align*}
a_1 &= \tfrac{1}{13312} (-11 u^7 - 65 u^6 - 39 u^5 - 117 u^4
           - 2561 u^3 \\ & \hspace{11em} - 16419 u^2 - 20173 u - 126503), \\
a_2 &= \tfrac{1}{6656} (-253 u^7 + 364 u^6 - 793 u^5 + 1092 u^4
            - 58695 u^3 \\ & \hspace{11em} + 81172 u^2 - 457635 u + 616252).
\end{align*}
We now attempt to compute a cubic corresponding 
to $a = a_1 a_2$. The basis for $L$ suggested 
in \SSS\ref{compobs} is
\small
\begin{align*}
u_1 &= \tfrac{1}{39936} (15 u^7 + 13 u^6 - 65 u^5 + 13 u^4 + 4069 u^3
        + 3055 u^2 - 2675 u - 10569), \\
u_2 &= \tfrac{1}{19968} (-7 u^7 + 26 u^6 + 65 u^5 - 130 u^4 - 1885 u^3
        + 6422 u^2 + 5859 u - 8814), \\
& \hspace{5cm} \vdots \\
u_7 &= \tfrac{1}{19968} (7 u^7 - 13 u^6 - 13 u^5 + 143 u^4 + 1781 u^3
         - 4615 u^2 - 3311 u + 14469), \\
u_8 &= \tfrac{1}{13312} (3 u^7 + 13 u^6 - 13 u^5 - 195 u^4 + 481 u^3
         + 3471 u^2 + 1129 u - 7449).
\end{align*}
\normalsize

Proceeding exactly as in \SSS\ref{example1} we compute structure constants 
for the obstruction algebra $A$. Again $\av_1$ is the identity and
a portion of the multiplication table (describing left multiplication 
by $\av_2$) is as follows.
\begin{align*}
\av_2^2 & = 56 \av_1 - 2 \av_2 - 2 \av_3 + 2 \av_5 + 3 \av_6 - 2 \av_7 + 2 \av_8 + \av_9, \\ 
\av_2 \av_3 & = \av_1 + 6 \av_2 + 4 \av_3 + 7 \av_4 + 2 \av_5 - 8 \av_6 + 7 \av_7 - 4 \av_8 + \av_9, \\ 
& \hspace{4cm} \vdots \\
\av_2 \av_8 & = 28 \av_1 - 6 \av_2 - 4 \av_3 - \av_4 - 5 \av_5 + \av_6 - 5 \av_7 + 7 \av_8 + 2 \av_9, \\ 
\av_2 \av_9 & = -39 \av_1 + 12 \av_2 + 6 \av_3 + 3 \av_4 - \av_5 - 2 \av_6 - \av_8 + 4 \av_9. 
\end{align*}

We have $(a) = \mathfrak{p} \mathfrak{q}^2 \mathfrak{c}^3 $ where
$\mathfrak{p}$ and $\mathfrak{q}$ are distinct primes of norm $7^3$.
As predicted by Lemma~\ref{compdisc} the order with basis the 
$\av_i$ has discriminant
$2^4 \cdot 3^{16} \cdot 7^6 \cdot 41^4 = 3^9 \cdot 7^6 \cdot \Disc(L)$.

We computed a maximal order and found it has discriminant $3^6 \cdot 7^6$.
It follows by~(\ref{discA}) that $A$ does not split.
Alternatively we may check this by reducing to a norm equation (see for
example \cite{GHPS}). To this end we put
\begin{align*}
 u &= \tfrac{1}{246}(-82 \av_1 - 21 \av_2 - 32 \av_3 - 4 \av_4 - 37 \av_5
                     + 26 \av_6 + 23 \av_7 + 25 \av_8 + 13 \av_9), \\
 v &= \tfrac{1}{246}(256 \av_2 + 225 \av_3 - 80 \av_4 + 166 \av_5 - 224 \av_6
                     - 335 \av_7 - 81 \av_8 + 104 \av_9).
\end{align*}
The minimal polynomial of $u$ is $X^3 + X^2 - 4 X + 1$ with 
discriminant $13^2$. Moreover $v u v^{-1} = u^2 + u - 3$ and $v^3 = b$ 
where $b = 3^2 \cdot 5 \cdot 7^2$.
Thus $A$ is the cyclic algebra $(F/\Q,\sigma,b)$ where
$F = \Q(u)$ and $\sigma: u \mapsto u^2 + u - 3$. In particular 
$A$ splits if and only if there exists $\theta \in F$ with 
$N_{F/\Q}(\theta) = b$. Since $3$ and $7$ are inert 
in $F$ the norm equation is not locally soluble at these primes.

In conclusion the $3$-coverings defined by $F_1$ and $F_2$ 
have trivial obstruction, but their sum does not. 


\end{document}